\newtheorem{theorem}{Theorem}
\newtheorem{prop}[theorem]{Proposition}
\newtheorem{lem}[theorem]{Lemma}
\newtheorem{cor}[theorem]{Corollary}
\newtheorem{rem}[theorem]{Remark}
\newtheorem{example}[theorem]{Example}
\newtheorem{thm}{Theorem}
\newcommand{\lo}{\mathring{L}}               
\newcommand{\Hess}{\operatorname{Hess}}
\newcommand{\tr}{\operatorname{tr}}
\newcommand{\id}{\operatorname{Id}}      
\newcommand{\JF}{\mathcal{F}}                
\newcommand{\Fo}{\mathring{\mathcal{F}}}  
\newcommand{\W}{\overline{\mathcal{W}}}
\newcommand{\Ric}{\operatorname{Ric}}
\newcommand{\Curv}{\mathcal{R}}
\newcommand{\scal}{\operatorname{scal}}
\newcommand{\grad}{\operatorname{grad}}
\newcommand{\SC}{\mathcal{S}}                 
\newcommand{\res}{\operatorname{Res}}     
\newcommand{\G}{{\mathcal G}}                 
\newcommand{\LOP}{\mathcal {D}}               
\newcommand{\var}{{\textit{var}}}               
\def\J{{\sf J}}
\def\R{{\mathbb R}}
\def\Rho{{\sf P}}
\def\B{{\mathcal B}}                                  
\def\st{\stackrel{\text{def}}{=}}
\numberwithin{theorem}{section} \numberwithin{equation}{section}
\title{On singular Yamabe obstructions}
\author[Andreas Juhl and Bent {\O}rsted] {Andreas Juhl and Bent {\O}rsted}
\address{Department of Mathematics of {\AA}rhus University, Ny Munkegade 118,
8000 {\AA}rhus, Denmark}
\email{orsted@math.au.dk}
\address{Humboldt-Universit\"at, Institut f\"ur Mathematik, Unter den Linden 6, 10099 Berlin, Germany}
\email{ajuhl@math.hu-berlin.de}
\email{juhl.andreas@googlemail.com}
\keywords{conformal geometry,
hypersurface invariant,
Poincare-Einstein metric,
singular Yamabe problem,
Yamabe obstruction,
Willmore functional}
\begin{document}

\begin{abstract}
We discuss the singular Yamabe obstruction $\B_3$ of a hypersurface in a four-dimensional general background.
We derive various explicit formula for $\B_3$ from the original definition. We relate these formulas to corresponding
formulas in the literature. The proofs are elementary.
\end{abstract}

\maketitle

\begin{center} \today \end{center}

\tableofcontents

\section{Introduction}\label{intro}

On a smooth manifold equipped with a Riemannian metric, the basic objects for
geometry are the canonical Levi-Civita connection and the corresponding Riemann
curvature tensor. Conformal geometry is partly studying transformations preserving
angles, and partly polynomials in the curvature and its derivatives transforming
in simple ways under conformal changes of the metric, i.e., multiplying it by a
positive smooth function. Already the decomposition of the curvature into the Weyl
tensor, the scalar curvature, and the trace-free Einstein tensor indicates the role
of conformal geometry, since the Weyl tensor transforms just by multiplication by a
power of the conformal factor. H. Weyl himself was introducing the first gauge
theory in physics exactly via the local change of scale given by a conformal factor,
and related it to the Maxwell equations in relativistic field theory.
 \index{$\mathcal{W}_2$ \quad conformal Willmore functional}
Given a hypersurface in a Riemannian manifold one may in addition consider invariants 
coming from the embedding, i.e., not only by the metric on the hypersurface 
induced by the metric of the ambient space, but also from the normal geometry of the hypersurface. 
In the classical Gauss theory of surfaces in a three-dimensional Euclidean space, the product 
of the principal curvatures is the intrinsic Gauss/scalar curvature, whereas the arithmetic mean 
of the principal curvatures is the extrinsic mean curvature. A celebrated conformal invariant is 
the integral of the square of the mean curvature over the surface, the so-called Willmore energy, 
also relevant for the physical theory of surfaces. It is closely related to the conformally invariant integral 
$$       
   \mathcal{W}_2 = \int_{M^2} |\lo|^2 dvol
$$              
of the conformally invariant squared norm of the trace-free second fundamental form. In general, 
curvature invariants of a hypersurface consist of intrinsic invariants, coming from the induced metric, 
and extrinsic invariants, coming from the second fundamental form and the ambient metric. Whereas 
for a given manifold it is known how to describe the conformally invariant scalar curvature quantities 
using the Fefferman-Graham ambient metric \cite{FG-final}, an analogous description of scalar 
conformal invariants of a hypersurface is not known. Such a classification would also be of interest in physics 
\cite{Sol}.  

Recent years have seen attempts to embed the theory of the Feffermann-Graham ambient metric 
and of the related Poincar\'e-Einstein metric into a wider framework. For instance, Albin \cite{Albin} 
extended parts of the theory to Poincar\'e-Lovelock metrics including applications to $Q$-curvature. 
In another direction, Gover et al. \cite{Gover-AE,GW-announce,GW-LNY,GGHW} developed a tractor 
calculus approach to the problem of constructing higher-order generalizations of the Willmore functional 
$\mathcal{W}_2$. Here a central role is played by the singular Yamabe problem which replaces the Einstein 
condition. In \cite{ACF}, it was discovered that the obstruction to the {\em smooth} solvability of the 
singular Yamabe problem of a hypersurface of dimension $n$ is a scalar conformal invariant $\B_n$. 
The observation of Gover et al. that $\B_2=0$ is the Euler-Lagrange equation of $\mathcal{W}_2$ 
was the starting point of their theory. More generally, \cite{Graham-Yamabe} identified the 
equation $\B_n=0$ as the Euler-Lagrange equations of a conformally invariant 
functional which he termed the Yamabe energy. This energy is an analog of the integrated (critical) 
renormalized volume coefficient of a Poincar\'e-Einstein metric which in turn is related to the 
integrated (critical) Branson $Q$-curvature. Notably this connection to $Q$-curvature also extends to the 
present setting \cite{GW-reno}, \cite{JO}.

Formulas for the conformally invariant obstruction in terms of classical curvature data are not known for 
$n\ge 4$. But for $n=3$, such a formula for $\B_3$ was derived in \cite{GGHW} from a general tractor calculus 
formula in \cite{GW-LNY}. 

In the present paper, we shall take a classical perspective and derive formulas for $\B_3$ directly from 
its very definition. This approach is independent. We only apply standard linear algebra and tensor calculations. 
It confirms and partly corrects results in the literature. As technical tools we also employ some differential identities 
involving $L$. Partly these are classical such as those found by J. Simons, and partly these are less well-known. Finally, we 
apply classical style arguments to relate $\B_3$ to the variation of the conformally invariant functional  
$$
    \mathcal{W}_3 = \int_M (\tr(\lo^3) + (\lo,\W)) dvol
$$               \index{$\mathcal{W}_3$ \quad higher Willmore functional}
which can be viewed as a natural generalizations of the classical Willmore functional. This fits with Graham's 
theorem \cite[Theorem 3.1]{Graham-Yamabe}. Our arguments replace a technique introduced 
and exploited in \cite{GGHW} for the same purpose.

The formulation of the main result requires some notation. Let $L$ be the second fundamental form, $\lo$ its 
trace-free part and $H$ the mean curvature.  Let $\overline{W}$ be the Weyl tensor of the background metric. 
We also define two contractions $\overline{W}_{0}$ and $\W$ of $\overline{W}$ on $M$ 
by inserting a unit normal vector $\partial_0$ at the last and at the first and the last slot, respectively. We let 
the operator $\LOP$ act on trace-free symmetric bilinear forms $b$ on $M$ by 
$\LOP (b) = \delta \delta (b) + (\Rho,b)$. $\LOP$ maps trace-free symmetric bilinear forms to $C^\infty(M)$. 
It is conformally invariant in the sense that $e^{4\varphi} \hat{\LOP} (b) = \LOP (b)$ for $\varphi \in C^\infty(M)$. 
The Levi-Civita connections on $X$ and $M$ are denoted by $\bar{\nabla}$ and $\nabla$. For more details see 
Section \ref{not}. For the definition of the obstructions $\B_n$ we refer to Section \ref{SYP}.

\index{$\LOP$}

\begin{thm}\label{main1} Let $\iota: M^3 \hookrightarrow (X^4,g)$ be a smooth embedding. Then it holds
\begin{align}\label{B3-g-final}
   12 \B_3 & = 6 \LOP ((\lo^2)_\circ) + 2 |\lo|^4  + 2 \LOP (\W)  \notag \\
   & - 2 \lo^{ij} \bar{\nabla}^0(\overline{W})_{0ij0} - 4 \lo^{ij} \nabla^k \overline{W}_{kij0} - 4 H(\lo,\W)
  + 16 (\lo^2,\W)  + 4 |\W|^2 + 2 |\overline{W}_{0}|^2.
\end{align}
\end{thm}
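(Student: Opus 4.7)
The plan is to compute $\B_3$ directly from its definition (Section \ref{SYP}) as the obstruction to smoothness of the singular Yamabe defining density. Fix a smooth defining function $\sigma$ for $M \subset (X,g)$, normalized so that $|d\sigma|_g = 1$ along $M$, and seek a positive smooth $u$ on a neighborhood of $M$ such that $\bar\sigma = u\sigma$ satisfies the singular Yamabe equation. Expanding $u = u_{(0)} + u_{(1)} \sigma + u_{(2)} \sigma^2 + \cdots$ and substituting, the coefficients $u_{(0)}, u_{(1)}, u_{(2)}$ are determined by universal polynomial expressions in $L, H, \Rho$ and $\overline{W}$ restricted to $M$; at order $\sigma^3$ the recursion degenerates and the solvability condition delivers the scalar $\B_3$.

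First I compute $u_{(1)}$ (proportional to $H$) and $u_{(2)}$ (involving $|\lo|^2$ and a normal component of the Schouten tensor), then extract the coefficient of $\sigma^3$ in the singular Yamabe equation. This coefficient assembles from three sources: purely algebraic quartic expressions in $L$ coupled to Weyl contractions, yielding $|\lo|^4$, $(\lo^2,\W)$, $H(\lo,\W)$, $|\W|^2$, and $|\overline{W}_0|^2$; tangential second derivatives of $u_{(1)}$ and $u_{(2)}$ producing terms such as $\Delta H$ and $\nabla^i \nabla^j \lo_{ij}$; and transverse derivatives of the ambient Weyl tensor, such as $\bar\nabla^0(\overline{W})_{0ij0}$ together with $\nabla^k \overline{W}_{kij0}$ from the Codazzi identity.

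Next I rewrite the result in the conformally natural form of \eqref{B3-g-final}. The divergence-divergence operators $\delta\delta$ applied to $(\lo^2)_\circ$ and to $\W$ combine with Schouten pairings arising from Gauss's equation to produce the operators $\LOP((\lo^2)_\circ)$ and $\LOP(\W)$ as defined in the statement. Gauss, Codazzi, and Ricci equations are used to decompose the relevant restrictions of $\OR$ and $\bar\nabla \OR$ into intrinsic Schouten, normal Weyl components, and polynomials in $L$. A Simons-type identity for $\nabla^2 \lo$ is applied at one point to trade a differentiated term for an algebraic one, in line with the paper's announced use of classical and less-well-known differential identities for $L$.

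The main obstacle is bookkeeping rather than ideas. Different intermediate expressions for the $\sigma^3$-coefficient agree only up to Gauss/Codazzi correction terms, so every reorganization injects $L$-quadratic pieces that must be tracked. Keeping control of the numerical constants in \eqref{B3-g-final} and verifying that all non-conformally-natural divergences cancel against Schouten contractions before the final grouping into $\LOP$ is where the delicate part lies. Conformal invariance of the final expression, which is guaranteed a priori for $\B_3$, serves as a valuable consistency check throughout.
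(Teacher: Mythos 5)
Your overall strategy---expanding the singular Yamabe equation for $u\sigma$ in powers of a normalized defining function and then reorganizing the resulting coefficient with Gauss, Codazzi and Simons-type identities---is in essence the same route the paper takes (there $\sigma$ is the distance function $r$, and the expansion data are packaged in the volume coefficients $v_k$ of $h_r$). However, as written the proposal contains a concrete error and a substantive gap. The order bookkeeping is off by one: for $n=3$ the coefficient of $\sigma^3$ in $\SC(g,\bar\sigma)-1$ is \emph{not} obstructed, since the recursion factor $(k-1-n)$ equals $-1$ at $k=3$ and still determines the next Taylor coefficient; the degeneration occurs only at order $\sigma^4$, and $\B_3$ is the coefficient of $\sigma^{4}$. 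Consequently $\B_3$ depends on $u_{(3)}=\sigma_{(4)}$---which contains $\Delta(H)$, $\tr(\lo^3)$, $H|\lo|^2$ and a normal derivative of $\bar{\J}$---and your plan stops at $u_{(2)}=\sigma_{(3)}$ and never computes it. Without $\sigma_{(4)}$ one cannot even produce the terms $6(\lo,\Hess(H))$, $6H\tr(\lo^3)$, $12|dH|^2$ visible already in the flat-background formula \eqref{B3-flat-back}.

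Beyond that, the part you defer as ``bookkeeping'' is precisely where the content of \eqref{B3-g-final} lies, and none of it is carried out or verified. In the paper, eliminating the second normal derivatives $\bar{\nabla}_0^2(\overline{\Ric})_{00}-\tfrac12\overline{\scal}''$ requires differentiating the second Bianchi identity along the normal (Lemma \ref{Nabla-2G}); assembling the divergence terms into $6\LOP((\lo^2)_\circ)$ hinges on the difference formula for $\Delta(|\lo|^2)-2\delta\delta(\lo^2)$ (Lemma \ref{diff-key-g}), which rests on the trace-free Codazzi--Mainardi equation and is exactly what generates the terms $-2\lo^{ij}\nabla^k\overline{W}_{kij0}+|\overline{W}_{0}|^2$ of the theorem; and matching the remaining Weyl terms uses the Fialkov identity \eqref{Fial} together with the identities of Lemmas \ref{LW-T} and \ref{Bach-relation}. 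A priori conformal invariance of $\B_3$, which you invoke as a consistency check, constrains but cannot determine the precise numerical coefficients in \eqref{B3-g-final}. As it stands the proposal is a plausible plan rather than a proof: to complete it you would need to compute $\sigma_{(4)}$, extract the $\sigma^4$-coefficient (the analogue of Lemma \ref{B3-volume}, including the $v_4$-computation via $\tr(h_{(4)})$ as in Lemma \ref{trace-h4}), and then supply the cancellation identities just described with their exact constants.
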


For a conformally flat background, Theorem \ref{main1} reduces to the identity
\begin{equation}\label{B3-CF}
   6 \B_3  = 3 \LOP ((\lo^2)_\circ) + |\lo|^4 = \Delta (|\lo|^2) - |\nabla \lo|^2 + 3/2 |\delta(\lo)|^2 
   - 2 \J |\lo|^2 + |\lo|^4.
\end{equation}
The second equality follows from Lemma \ref{NEW3a}. We recall the well-known fact that for an odd-dimensional $M$, 
there are Poincar\'e-Einstein metrics $g_+$ such that the conformal compactification $g=r^2 g_+$ is smooth \cite{FG-final}.
For such a metric, it holds $\lo=0$, $\W = 0$ and even $\overline{W}_{0} = 0$ \cite[Proposition 4.3]{Gover-AE}. In 
particular, for $n=3$, the above formula confirms that $\B_3 = 0$. If $\lo=0$, then
$$
   6 \B_3 = \LOP(\W) + 2 |\W|^2 + |\overline{W}_{0}|^2.
$$
Formula \eqref{B3-g-final} confirms the conformal invariance of $\B_3$ (of weight $-4$). In fact, the conformal 
invariance of $\LOP$ implies that $\LOP((\lo^2)_\circ)$ and $\LOP(\W)$ are individually conformally invariant.
Furthermore, the sum of the first three terms in the second line of \eqref{B3-g-final} is conformally invariant. 
In fact, it holds
$$
  \lo^{ij} ( \bar{\nabla}^0(\overline{W})_{0ij0} + 2 \nabla^k \overline{W}_{kij0} +2 H \W_{ij}) 
   = (\lo, B) + (\lo^2,\W) + \lo^{ij} \lo^{kl} \overline{W}_{kijl}
$$
(see Lemma \ref{Bach-relation}) with a conformally invariant tensor $B$ of weight $-1$, i.e., $e^\varphi \hat{B} = B$, 
introduced in \cite[Lemma 2.1]{GGHW} and termed the hypersurface Bach tensor. All remaining terms in 
\eqref{B3-g-final} are individually conformally invariant.  

In the course of the proof of Theorem \ref{main1} we shall derive a number of equivalent formulas for $\B_3$ which 
are of interest in special cases. In particular, we find that
\begin{equation}\label{B3-flat-back}
   12 \B_3 = \Delta (|\lo|^2) + 6 (\lo,\Hess(H)) + 6  H \tr(\lo^3) + |\lo|^4 + 12 |dH|^2
\end{equation}
for a flat background (Corollary \ref{B3-inter-corr}). In \cite[Section 13.7]{JO}, we derived this formula in a 
different way as a consequence of a general expression for singular Yamabe obstructions \cite[Theorem 6]{JO}. 
One may also derive the general case of the above formula along this line. In \cite{JO}, we derived \eqref{B3-CF} by 
combining the conformal invariance of $\B_3$ with \eqref{B3-flat-back} and Simons identity.

The heat kernel asymptotics of elliptic boundary value problems for Laplace-type operators are 
another rich source of polynomials in the covariant derivatives of the curvature tensor and of the second fundamental 
form; in \cite{BG} explicit formulas are given for the first five (integrated) heat coefficients. It is a natural question to 
determine the polynomials of this nature which exhibit a conformal invariance. 

The paper is organized as follows. In Section \ref{second}, we derive identities for $\delta \delta (\lo^2)$ 
and $\Delta (|\lo|^2)$ which are crucial for later calculations and may also be of independent interest. They are 
closely related to some identities of J. Simons \cite{Simons}. In Section \ref{SYP} we define the singular 
Yamabe problem and the resulting obstructions in general dimensions. In Section \ref{B2-cl} we derive a 
formula for $\B_2$ and connect it with the Willmore equation. Section \ref{B3-general} is devoted to the 
derivation of formulas for $\B_3$ in terms of standard curvature quantities. The starting point will be a formula
in terms of the volume expansion of $g$ in geodesic normal coordinates. Along the way we derive 
several equivalent formulas for $\B_3$ which might be of interest under specific additional assumptions 
on the background metric and the embedding. The proof of the main theorem Theorem \ref{main1} is contained  
in the last subsections. It is here where we need the material of Section \ref{second}. In the final section, we 
derive the right-hand side of \eqref{B3-g-final} by variation of the functional $\mathcal{W}_3$ under normal 
variations of the embedding reproving a result in \cite{GGHW}, \cite{Graham-Yamabe}. Finally, we note that 
the main result is equivalent to \cite[Proposition 1.1]{GGHW} in the arXiv-version, but differs from its printed version - we 
clarify that issue in Remark \ref{GGHW-wrong}.

In view of the possible applications to physics, such as string and membrane theory, and the principles of holography, 
we have tried to be very explicit throughout.

\section{Notation}\label{not}

All manifolds $X$ are smooth. For a manifold $X$, $C^\infty(X)$ and $\Omega^p(X)$
denote the respective spaces of smooth functions and smooth $p$-forms. Let
$\mathfrak{X}(X)$ be the space smooth vector fields on $X$. Metrics on $X$ usually
are denoted by $g$. $dvol_g$ is the Riemannian volume element defined by $g$. The
Levi-Civita connection of $g$ is denoted by $\nabla_X^g$ or simply $\nabla_X$ for $X
\in \mathfrak{X}(X)$ if $g$ is understood. In these terms, the curvature tensor $R$
of the Riemannian manifold $(X,g)$ is defined by $R(X,Y)Z =\nabla_X \nabla_Y (Z) -
\nabla_Y \nabla_X (Z) - \nabla_{[X.Y]}(Z)$ for vector fields $X,Y,Z \in
\mathfrak{X}(X)$. The components of $R$ are defined by
$R(\partial_i,\partial_j)(\partial_k) = {R_{ijk}}^l \partial_l$. We also set
$\nabla_X (u) = \langle du,X \rangle$ for $X \in \mathfrak{X}(X)$ and $u \in
C^\infty(X)$. $\Ric$ and $\scal$ are the Ricci tensor and the scalar curvature of
$g$. On a manifold $(X,g)$ of dimension $n$, we set $2(n-1) \J = \scal$ and define
the Schouten tensor $\Rho$ of $g$ by $(n-2)\Rho = \Ric - \J g$. Let $W$ be the Weyl
tensor.

\index{$dvol_g$ \quad volume element of $g$}
\index{$\Omega^p$ \quad space of $p$-forms}
\index{$\nabla$ \quad Levi-Civita connection}
\index{$\grad (u)$ \quad gradient field of $u$}
\index{$\delta$ \quad divergence operator}
\index{$\Delta$ \quad Laplacian}

\index{$R$ \quad curvature tensor}
\index{$W$ \quad Weyl tensor}
\index{$\Ric$ \quad Ricci tensor}
\index{$\scal$ \quad scalar curvature}
\index{$\Rho$ \quad Schouten tensor}
\index{$\J$}

For a metric $g$ on $X$ and $u \in C^\infty(X)$, let $\grad_g(u)$ be the gradient of
$u$ with respect to $g$, i.e., it holds $g(\grad_g(u),V) = \langle du,V \rangle$ for
all vector fields $V \in \mathfrak{X}(X)$. $g$ defines pointwise scalar products
$(\cdot,\cdot)$ and norms $|\cdot|$ on $\mathfrak{X}(X)$, on forms $\Omega^*(X)$ and
on general tensors. Then $|\grad (u)|^2 = |du|^2$. In these definitions, we use the
metric as a subscript if needed for clarity. $\delta^g$ is the divergence operator
on differential forms or symmetric bilinear forms. On forms it coincides with the
negative adjoint $-d^*$ of the exterior differential $d$ with respect to the Hodge
scalar product defined by $g$. Let $\Delta_g = \delta^g d$ be the non-positive
Laplacian on $C^\infty(X)$. On the Euclidean space $\R^n$, it equals
$\sum_i \partial_i^2$. In addition, $\Delta$ will also denote the Bochner-Laplacian
(when acting on $L$).

A metric $g$ on a manifold $X$ with boundary $M$ induces a metric $h$ on $M$. In
such a setting, the curvature quantities of $g$ and $h$ will be distinguished by
adding a bar to those of $g$. In particular, the covariant derivative, the curvature
tensor and the Weyl tensor of $(X,g)$ are $\bar{\nabla}$, $\bar{R}$ and
$\overline{W}$. Similarly, $\overline{\Ric}$ and $\overline{\scal}$ are the Ricci
tensor and the scalar curvature of $g$.

\index{$\bar{\nabla}$ \quad Levi-Civita connection}
\index{$\overline{\Ric}_{0}$}
\index{$\overline{W}$ \quad Weyl tensor}
\index{$\overline{W}_0$}
\index{$\W$}
\index{$\overline{\scal}$ \quad scalar curvature}
\index{$\bar{\J}$}

A hypersurface usually is given by an embedding $\iota: M \hookrightarrow X$. Accordingly, 
tensors on $X$ are pulled back by $\iota^*$ to $M$. In formulas, we often omit this pull back.
For a hypersurface $\iota: M \hookrightarrow X$ with the induced metric $h = \iota^*(g)$ on $M$,
the second fundamental form $L$ is defined by $L(X,Y)= - h (\nabla^g_X(Y), N)$ 
for vector fields $X, Y \in \mathfrak{X}(M)$ and a unit normal vector field $\partial_0 = N$.
We set $n H = \tr_h(L)$ if $M$ has dimension $n$. $H$ is the mean curvature of $M$.
Let $\lo = L - H h$ be the trace-free part of $L$. We often identify $L$ with the shape operator $S$ 
defined by $h(X,S(Y)) = L(X,Y)$. 

We use metrics as usual to raise and lower indices. In particular, we set $(L^2)_{ij} = L_i^k L_{kj} 
= h^{lk} L_{il} L_{kj}$ and similarly for higher powers of $L$. We always sum over repeated indices.

The $1$-form $\overline{\Ric}_{0} \in \Omega^1(M)$ is defined by
$\overline{\Ric}_{0}(X) = \overline{\Ric}(X,\partial_0)$ for $X \in
\mathfrak{X}(M)$. Similarly, we write $b_0$ for the analogous $1$-form defined by a
bilinear form $b$ and we let $\overline{W}_0$ be the $3$-tensor on $M$ with components
by $\overline{W}_{ijk0}$, i.e., we always insert $\partial_0$ into the last slot. Moreover, we set
$\W_{ij} = \overline{W}_{0ij0}$. 

\index{$H$ \quad mean curvature}
\index{$\lo$ \quad trace-free part of $L$}
\index{$L$ \quad second fundamental form}
\index{$L^2$, $L^3$, $L^4$}
\index{$\partial_0$ \quad unit normal vector}

\section{Some second-order identities involving the second fundamental form}\label{second}

In the present section, we derive formulas for $\delta \delta (\lo^2)$ and $\Delta
(|\lo|^2)$ in terms of the geometry of the background and the intrinsic geometry of
$M$. The second of these formulas is closely related to a well-known formula of
Simons. The main results are Lemma \ref{NEW3a} and Lemma \ref{diff-key-g}. The
latter result will play an important role in Section \ref{B3-general}.

\begin{lem}\label{Id-basic} For $n=3$, it holds
\begin{equation}\label{Id-2}
   \delta \delta (\lo^2) = 2 \lo_{jk} \nabla^j \delta(\lo)^k + |\nabla \lo|^2 + \frac{1}{2} |\delta(\lo)|^2
   - \frac{1}{2} |\overline{W}_0|^2 + \kappa_1
\end{equation}
with
\begin{equation}\label{kappa-def}
   \kappa_1 \st (\nabla^i \nabla^j \lo^k_i - \nabla^j \nabla^i  \lo^k_i) \lo_{kj}.
\end{equation}
\end{lem}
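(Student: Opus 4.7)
The plan is to expand $\delta\delta(\lo^2)$ directly via the Leibniz rule and then match each resulting piece to a term on the right-hand side of \eqref{Id-2}. Writing $(\lo^2)_{ij} = \lo_{ik}\lo^k{}_j$ and applying $\nabla^j$ and then $\nabla^i$ produces
\begin{equation*}
\delta\delta(\lo^2) = (\nabla^i\nabla^j\lo_{ik})\lo^k{}_j + (\nabla^j\lo_{ik})(\nabla^i\lo^k{}_j) + |\delta(\lo)|^2 + \lo_{jk}\nabla^j\delta(\lo)^k.
\end{equation*}
The last two contributions are already in the stated form. For the first summand I would commute the covariant derivatives as $\nabla^i\nabla^j = \nabla^j\nabla^i + [\nabla^i,\nabla^j]$: the $\nabla^j\nabla^i$ piece collapses to $(\nabla^j\delta(\lo)_k)\lo^k{}_j$, supplying a second copy of $\lo_{jk}\nabla^j\delta(\lo)^k$ and hence the coefficient $2$. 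The commutator piece $([\nabla^i,\nabla^j]\lo_{ik})\lo^k{}_j$ should coincide with $\kappa_1$ after applying the Ricci identity to the $(0,2)$-tensor $\lo$ and using $\nabla h = 0$ to raise an index; a short index manipulation gives $([\nabla^i,\nabla^j]\lo_{ik})\lo^k{}_j = ([\nabla^i,\nabla^j]\lo^k{}_i)\lo_{kj}$.

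The bulk of the work concerns the cross term. With $U_{abc} := \nabla_a\lo_{bc}$ (symmetric in $b,c$) the cross term equals $U_{abc}U^{bac}$, while $|\nabla\lo|^2 = U_{abc}U^{abc}$. Splitting $U$ into parts symmetric and antisymmetric in $(a,b)$ and invoking orthogonality yields the clean identity $U_{abc}U^{bac} = |\nabla\lo|^2 - 2\,|U_{[ab]c}|^2$. The Codazzi equation for $\lo$ identifies $2U_{[ab]c} = \epsilon\,\overline{R}_{abc0} - (\nabla_aH)h_{bc} + (\nabla_bH)h_{ac}$ for a sign $\epsilon$, so $|U_{[ab]c}|^2$ expands into $\tfrac{1}{4}|\overline{R}_0|^2$, mixed contractions of $\overline{R}_0$ against $dH$, and a pure $dH$ piece; the piece pairing $dH$ with the trace of $\lo$ drops out by trace-freeness of $\lo$.

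It remains to convert $|\overline{R}_0|^2$ and the leftover $dH$ terms into $|\overline{W}_0|^2$ and $|\delta(\lo)|^2$. Using $\overline{R}_{abc0} = \overline{W}_{abc0} - \bar{\Rho}_{a0}h_{bc} + \bar{\Rho}_{b0}h_{ac}$ (for $a,b,c$ tangent to $M$), the Weyl--Schouten cross terms vanish by trace-freeness of Weyl, leaving $|\overline{R}_0|^2 = |\overline{W}_0|^2 + 2(n-1)|\bar{\Rho}_0|^2$; in ambient dimension $4$, together with $\bar{\Rho}_{a0} = \tfrac{1}{2}\overline{\Ric}_{a0}$, this simplifies to $|\overline{W}_0|^2 + |\overline{\Ric}_0|^2$. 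Finally the contracted Codazzi identity $\delta(\lo) = (n-1)dH + \epsilon\,\overline{\Ric}_0$ rewrites $|\overline{\Ric}_0|^2$ and the residual pairings $(dH,\overline{\Ric}_0)$ in terms of $|\delta(\lo)|^2$ and $(dH,\delta(\lo))$. The main obstacle is this last collapse: one must carefully track the two Ricci-type traces of ambient Riemann restricted to $M$ (namely $h^{bc}\overline{R}_{abc0} = -\overline{\Ric}_{a0}$ versus $h^{ac}\overline{R}_{abc0} = \overline{\Ric}_{b0}$, which carry opposite signs), and then verify that the $n = 3$ numerical factors conspire to produce exactly the coefficients $-\tfrac{1}{2}|\overline{W}_0|^2$ and $\tfrac{1}{2}|\delta(\lo)|^2$ appearing in \eqref{Id-2}.
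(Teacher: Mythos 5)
Your proposal is correct, and it reaches \eqref{Id-2} by a genuinely different handling of the cross term than the paper. The Leibniz expansion and the commutation step that produce the coefficient $2$ and the curvature term $\kappa_1$ are identical to the paper's. The difference is downstream: the paper substitutes the \emph{trace-free} Codazzi--Mainardi identity \eqref{CM-trace-free}, which expresses the antisymmetric part of $\nabla\lo$ directly through $\delta(\lo)$ and $\overline{W}_0$, so the terms $-\tfrac12|\delta(\lo)|^2$ and $-\tfrac12|\overline{W}_0|^2$ appear immediately and $dH$, $\overline{\Ric}_0$ never enter; moreover the contraction $\overline{W}_{ijk0}\nabla^i\lo^{jk}=-\tfrac12|\overline{W}_0|^2$ is obtained by one antisymmetrization rather than by your orthogonal splitting of $\nabla\lo$. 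You instead use the full Codazzi equation, decompose $\bar{R}_{abc0}$ into Weyl and Schouten parts, and recombine at the end via the contracted Codazzi identity $\delta(\lo)=2dH+\overline{\Ric}_0$. The bookkeeping you deferred does close: with $U_{abc}=\nabla_a\lo_{bc}$ and the conventions of this paper one finds
\begin{equation*}
   4\,|U_{[ab]c}|^2 \;=\; \bar{R}_{abc0}\bar{R}^{abc0} + 4\,(dH,\overline{\Ric}_0) + 4\,|dH|^2,
   \qquad
   \bar{R}_{abc0}\bar{R}^{abc0} \;=\; |\overline{W}_0|^2 + |\overline{\Ric}_0|^2,
\end{equation*}
so that $-2|U_{[ab]c}|^2 = -\tfrac12|\overline{W}_0|^2 - \tfrac12\bigl(|\overline{\Ric}_0|^2 + 4(dH,\overline{\Ric}_0)+4|dH|^2\bigr) = -\tfrac12|\overline{W}_0|^2 - \tfrac12|\delta(\lo)|^2$, which are exactly the missing coefficients in \eqref{Id-2}. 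The signs you left open must be fixed consistently: with the Kulkarni--Nomizu convention \eqref{KN} used here, the Schouten part of $\bar{R}_{abc0}$ is $+\bar{\Rho}_{a0}h_{bc} - \bar{\Rho}_{b0}h_{ac}$ (opposite to what you wrote), and the relative sign between this and the $dH$ terms is what makes the final expression collapse to a perfect square $\tfrac12|2dH+\overline{\Ric}_0|^2$; with an inconsistent choice it would not. In short, your route is more elementary in that it invokes only the raw Codazzi equation plus standard decompositions, at the cost of carrying $dH$ and $\overline{\Ric}_0$ through the computation; the paper's route buys brevity because \eqref{CM-trace-free} packages precisely this combination in advance.
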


\begin{proof} First, we calculate
\begin{align*}
    \delta \delta (\lo^2) & = \nabla^i \nabla^j \lo^2_{ij}
    = (\nabla^i \nabla^j \lo_i^k) \lo_{kj} + (\nabla^j \lo_i^k)(\nabla^i \lo_{kj})
   + (\nabla^i \lo_i^k) (\nabla^j \lo_{kj}) + \lo_i^k (\nabla^i \nabla^j \lo_{kj}) \\
   & =  (\nabla^i \nabla^j \lo^k_i) \lo_{kj} + (\nabla^j \lo_i^k)(\nabla^i \lo_{kj}) +
   \delta(\lo)^k \delta(\lo)_k + \lo_i^k \nabla^i \delta (\lo)_k.
\end{align*}
In the first term, we interchange covariant derivatives. This generates the curvature term
\begin{equation*}
    \kappa_1 \st (\nabla^i \nabla^j \lo_i^k) \lo_{kj} - (\nabla^j \nabla^i \lo_i^k) \lo_{kj}.
\end{equation*}
In the second term, we apply the Codazzi-Mainardi equation:
$$
   \nabla_j L_{ik} - \nabla_i L_{jk} = \bar{R}_{ijk0}.
$$
Its trace-free part gives
\begin{equation}\label{CM-trace-free}
   \nabla^j \lo^k_i - \nabla_i \lo^{jk} = - \frac{1}{2} \delta(\lo)^j h_i^k + \frac{1}{2} \delta(\lo)_i h^{jk}
   + {{\overline{W}_i}^{jk}}_0.
\end{equation}
In particular, we get
$$
   (\nabla^j \lo_i^k - \nabla_i \lo^{jk})  \nabla^i \lo_{jk} = - \frac{1}{2} (\delta(\lo),\delta(\lo))
  + {{\overline{W}_i}^{jk}}_0 \nabla^i \lo_{jk}.
$$
But
\begin{equation*}\label{add-1}
    \overline{W}_{ijk0} \nabla^i \lo^{jk}
    = \frac{1}{2} \left(\overline{W}_{ijk0} - \overline{W}_{jik0} \right) \nabla^i \lo^{jk}
    =  \frac{1}{2} \overline{W}_{ijk0} \left(\nabla^i \lo^{jk}  - \nabla^j \lo^{ki} \right)
    = -\frac{1}{2} |\overline{W}_0|^2,
\end{equation*}
where $|\overline{W}_0|^2 \st \overline{W}_{ijk0} \overline{W}^{ijk0}$. Thus
\begin{align*}
   (\nabla^j \lo_i^k) (\nabla^i \lo_{kj}) & = (\nabla^j \lo_i^k - \nabla_i \lo^{jk}) \nabla^i \lo_{kj}
  + (\nabla_i \lo^{jk})(\nabla^i \lo_{kj}) \\
  & = (\nabla_i \lo^{jk}) (\nabla^i \lo_{kj}) - \frac{1}{2} (\delta(\lo),\delta(\lo)) - \frac{1}{2} |\overline{W}_{0}|^2 \\
  & = (\nabla \lo, \nabla \lo) - \frac{1}{2} (\delta(\lo),\delta(\lo)) - \frac{1}{2} |\overline{W}_{0}|^2.
\end{align*}
These observations show that
\begin{equation}\label{dd-eval}
   \delta \delta (\lo^2) = 2 \lo_{jk} \nabla^j \delta(\lo)^k + |\nabla \lo|^2 + \frac{1}{2} |\delta(\lo)|^2
   - \frac{1}{2} |\overline{W}_{0}|^2 + \kappa_1
\end{equation}
with $\kappa_1$ as being defined in \eqref{kappa-def}.
\end{proof}

\begin{lem}\label{kappa-1a}
\begin{equation*}\label{K1}
   \kappa_1 = 3 (\lo^2,\Rho) + \J |\lo|^2.
\end{equation*}
\end{lem}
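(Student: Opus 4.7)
The identity is a purely intrinsic statement on the $3$-manifold $M$, since $\lo$ is a symmetric tensor on $M$ and the $\nabla^i$ appearing in $\kappa_1$ are the Levi-Civita derivatives of the induced metric $h$. My plan is therefore to read $\kappa_1$ as a Ricci commutator applied to the $(1,1)$-tensor $\lo^k_i$, rewrite the resulting intrinsic Riemann tensor in terms of the Schouten tensor via $n=3$, and then cash in on the tracelessness and symmetry of $\lo$.

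First I would write
$$
   \kappa_1 = g^{ia} g^{jb} \bigl( [\nabla_a,\nabla_b] \lo^k_i \bigr) \lo_{kj}
   = g^{ia} g^{jb} \bigl( R_{abl}{}^k \lo^l_i - R_{abi}{}^l \lo^k_l \bigr) \lo_{kj},
$$
where $R$ is the intrinsic Riemann tensor of $(M,h)$. The metric is parallel, so it passes through the derivatives without fuss; the only point to get right is that the Ricci identity for a $(1,1)$-tensor contributes one $R$-term for each index.

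Next I would use that in dimension $3$ the intrinsic Weyl tensor vanishes, so $R_{ijk}{}^l$ is fully determined by the Schouten tensor $\Rho$ through the standard Kulkarni-Nomizu decomposition (with the sign convention of the paper),
$$
   R_{ijk}{}^l = \Rho_{jk}\delta^l_i - \Rho_{ik}\delta^l_j + \Rho_i{}^l h_{jk} - \Rho_j{}^l h_{ik}.
$$
Substituting this into the two $R$-terms above produces, for each term, four pieces. At this point the hoped-for simplification happens: every piece in which the Kronecker delta or the metric contracts into $\lo^l_i$ or $\lo_{kj}$ so as to create a trace of $\lo$ drops out because $\lo$ is trace-free. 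The remaining pieces are all contractions of $\Rho$ with either $\lo^2$ (using the identity $\lo^k_l \lo_{kj} = (\lo^2)_{lj}$ and its relatives) or with $h$ itself (which produces $\tr(\Rho)=\J$ times $|\lo|^2$).

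Collecting the survivors, I would expect the first bracket to yield $2(\Rho,\lo^2)$ (two of its four pieces vanish by tracelessness, and the other two both equal $(\Rho,\lo^2)$), and the second bracket to yield $(\Rho,\lo^2)+\J|\lo|^2$ (all four pieces contribute, with the $g^{ia}$-contraction in its third slot producing the $n\Rho$ from $g^{im}h_{mi}=n=3$ and the scalar $\J$ from $g^{im}\Rho_{mi}$). Adding gives $\kappa_1 = 3(\lo^2,\Rho) + \J|\lo|^2$, as claimed. The main obstacle is purely bookkeeping: sign and placement conventions for the Riemann tensor and for $\Rho\owedge h$ determine the final sign, and one must pick the convention compatible with the paper's definition $R(\partial_i,\partial_j)\partial_k = R_{ijk}{}^l\partial_l$ and $(n-2)\Rho=\Ric-\J g$; a cross-check against the round $3$-sphere is a quick way to pin this down.
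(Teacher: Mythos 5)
Your proposal is correct and follows essentially the same route as the paper: interpret $\kappa_1$ via the Ricci identity as a curvature contraction and then use that in dimension $3$ the Weyl tensor vanishes, so $R$ is the Kulkarni--Nomizu product of $\Rho$ and $h$ (the paper's \eqref{KN}), after which the contraction gives $3(\lo^2,\Rho)+\J|\lo|^2$. The only difference is bookkeeping: you contract directly with the trace-free $\lo$, while the paper first replaces $\lo$ by $L$, obtains $3(L^2,\Rho)-6H(L,\Rho)+\J|L|^2$, and converts back at the end.
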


\begin{proof} By definition, we have
$$
   \kappa_1 = (\nabla^i \nabla^j \lo^k_i - \nabla^j \nabla^i  \lo^k_i) \lo_{kj}
   = \Curv^{ij} (\lo)_i^k \lo_{kj},
$$
where $\Curv$ denotes the curvature operator of $M$. We also observe that
$$
    \Curv^{ij} (\lo)_i^k \lo_{kj} = \Curv^{ij} (L)_i^k L_{kj}.
$$
Hence by
$$
   \Curv_{ij} (L)_{kl} = - L_l^{m} R_{ijkm} - L^m_k R_{ijlm}
$$
and the decomposition
\begin{equation}\label{KN}
   R_{ijkl} = -\Rho_{ik} h_{jl} + \Rho_{jk} h_{il} - \Rho_{jl} h_{ik} + \Rho_{il} h_{jk}
\end{equation}
(the Weyl tensor vanishes in dimension $3$) we get (see also Remark \ref{kappa1-2})
\begin{align}\label{kappa-curv}
   \kappa_1 & = - (L^m_i {R^{ij}}_{km} + L^{km} {{R^{ij}}_{im}}) L_{kj} \\
   & = L^m_i  (\Rho^i_k h^j_m - \Rho_k^j h_m^i + \Rho^j_m h_k^i - \Rho_m^i h_k^j) L_{kj}
   + L^{km} \Ric^j_m L_{kj} \notag \\
   & = 2 (L^2,\Rho) - 6 H (L,\Rho) + (L^2,\Ric) \notag \\
   & = 3 (L^2,\Rho) - 6 H (L,\Rho) + \J |L^2| \notag \\
   & = 3 (\lo^2,\Rho) + \J |\lo|^2. \notag
\end{align}
Alternatively, combining the Simons' identity \eqref{S-I} with the Gauss formula for the curvature
endomorphism yields the first identity in \eqref{kappa-curv}. This completes the proof.
\end{proof}

\begin{example}\label{kappa1-flat} For flat backgrounds, it holds
$$
   \kappa_1 = 3 H \tr(L^3) - |L|^4 = 3 H \tr(\lo^3) + 3 H^2 |\lo|^2 - |\lo|^4.
$$
\end{example}

\begin{proof} The Gauss identity gives
$$
    \J = -\frac{1}{4} |\lo|^2 + \frac{3}{2} H^2
$$
and the identity
\begin{equation}\label{Fial}
    \JF \st \iota^* \bar{\Rho} - \Rho + H \lo + \frac{1}{2} H^2 h \stackrel{!}{=} \lo^2 - \frac{1}{4} |\lo|^2 h + \W
\end{equation}
for the conformally invariant Fialkov tensor $\JF$ of weight $0$ \cite[Lemma 6.23.3]{J1} implies
$$
   \Rho = - \lo^2 + \frac{1}{4} |\lo|^2 h + H \lo + \frac{1}{2} H^2 h.
$$
Hence
$$
   3 (\lo^2,\Rho) + \J |\lo|^2 = - 3 \tr(\lo^4) + \frac{1}{2} |\lo|^4 + 3 H \tr(\lo^3) + 3 H^2 |\lo|^2
$$
and it suffices to apply the identity $2 \tr(\lo^4) = |\lo|^4$ (Corollary \ref{trace-id}).
\end{proof}

Now let
\begin{equation}\label{M2}
   \kappa_2 \st (\lo,\Delta (\lo)) - \frac{3}{2} \lo^{ij} \nabla_j \delta(\lo)_i.
\end{equation}

\begin{lem}\label{diff-simple}
\begin{equation*}
    \kappa_1 - \kappa_2 = \lo^{ij} \nabla^k \overline{W}_{kij0}.
\end{equation*}
\end{lem}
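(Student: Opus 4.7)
The plan is to express $\kappa_1$ in a form that makes the comparison with $\kappa_2$ immediate. The natural device is the trace-free Codazzi-Mainardi identity \eqref{CM-trace-free} already established in the proof of Lemma \ref{Id-basic}, which lets us swap the order in which $\nabla$ acts on the first two free indices of $\lo$, at the cost of divergence terms and a Weyl curvature correction.

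First, I would rewrite the defining expression
\[
   \kappa_1 = (\nabla^i \nabla^j \lo^k_i - \nabla^j \nabla^i \lo^k_i) \lo_{kj}
\]
as follows. Since $\nabla^i \lo^k_i = \delta(\lo)^k$, the second term is simply $\lo_{kj} \nabla^j \delta(\lo)^k = \lo^{ij} \nabla_j \delta(\lo)_i$ after relabelling (using the symmetry of $\lo$). For the first term, apply $\nabla^i$ to the Codazzi-Mainardi identity
\[
   \nabla^j \lo^k_i = \nabla_i \lo^{jk} - \frac{1}{2} \delta(\lo)^j h_i^k + \frac{1}{2} \delta(\lo)_i h^{jk} + {\overline{W}_i}{}^{jk}{}_0,
\]
which yields
\[
   \nabla^i \nabla^j \lo^k_i = \Delta \lo^{jk} - \frac{1}{2} \nabla^k \delta(\lo)^j + \frac{1}{2} (\nabla^i \delta(\lo)_i) h^{jk} + \nabla^i {\overline{W}_i}{}^{jk}{}_0,
\]
since the metric is parallel.

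Next, contract with $\lo_{kj}$. The trace-free condition $\tr(\lo)=0$ kills the third term. Using the symmetry of $\lo$ to identify $\lo_{kj}\nabla^k \delta(\lo)^j = \lo^{ij}\nabla_j \delta(\lo)_i$, I obtain
\[
   (\nabla^i \nabla^j \lo^k_i) \lo_{kj} = (\lo,\Delta \lo) - \frac{1}{2} \lo^{ij} \nabla_j \delta(\lo)_i + \lo_{kj} \nabla^i {\overline{W}_i}{}^{jk}{}_0.
\]
Subtracting the contribution $\lo^{ij} \nabla_j \delta(\lo)_i$ from $\nabla^j \nabla^i \lo^k_i \cdot \lo_{kj}$, the divergence-of-divergence terms combine as $-\frac{1}{2}-1=-\frac{3}{2}$, giving
\[
   \kappa_1 = (\lo,\Delta \lo) - \frac{3}{2} \lo^{ij} \nabla_j \delta(\lo)_i + \lo_{kj} \nabla^i {\overline{W}_i}{}^{jk}{}_0
   = \kappa_2 + \lo_{kj} \nabla^i {\overline{W}_i}{}^{jk}{}_0.
\]
A final relabelling of dummy indices (together with the Weyl symmetry that makes the raising/lowering of $i$ versus the contraction with $\lo$ harmless) shows that $\lo_{kj} \nabla^i {\overline{W}_i}{}^{jk}{}_0 = \lo^{ij} \nabla^k \overline{W}_{kij0}$, which is the claimed identity.

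There is no real obstacle here: the argument is a direct application of Codazzi-Mainardi combined with the trace-free condition $\tr(\lo)=0$ to annihilate the $h^{jk}$ contribution. The only mildly delicate point is bookkeeping of indices to ensure the symmetric contraction with $\lo_{kj}$ yields precisely the coefficient $-3/2$ appearing in the definition \eqref{M2} of $\kappa_2$, so that the two commutator-type expressions differ by exactly the divergence of the Weyl tensor along $\partial_0$.
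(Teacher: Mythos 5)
Your proof is correct and follows essentially the same route as the paper: both contract a divergence of the trace-free Codazzi--Mainardi identity \eqref{CM-trace-free} with $\lo$ and use $\tr(\lo)=0$ to kill the pure trace term. The only cosmetic difference is that you take the divergence over the index that turns $\nabla_i \lo^{jk}$ into $\Delta \lo^{jk}$, so the explicit commutation of covariant derivatives performed in the paper (which is what produces $\kappa_1$ there) is absorbed into the definition of $\kappa_1$ in your version.
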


\begin{proof}
The trace-free part of the Codazzi-Mainardi equation reads
\begin{equation}\label{tf-CM}
   \nabla_i \lo_{kj} - \nabla_k \lo_{ij} - \frac{1}{2} \delta(\lo)_k h_{ij} + \frac{1}{2} \delta(\lo)_i h_{kj}
   = \overline{W}_{kij0}.
\end{equation}
Hence
$$
    \nabla^k \nabla_i \lo_{kj} - \nabla^k \nabla_k \lo_{ij} - \frac{1}{2} \nabla^k \delta(\lo)_k h_{ij}
   + \frac{1}{2} \nabla^k \delta(\lo)_i h_{kj} = \nabla^k \overline{W}_{kij0}.
$$
We commute the covariant derivatives in the first term and obtain
$$
   \lo^{ij} \nabla_i \delta(\lo)_j + \kappa_1 - (\lo, \Delta (\lo)) + \frac{1}{2} \lo^{ij} \nabla_j \delta(\lo)_i =
    \lo^{ij} \nabla^k \overline{W}_{kij0}.
$$
Therefore, we get
$$
   \frac{3}{2} \lo^{ij} \nabla_i \delta(\lo)_j - (\lo, \Delta(\lo)) + \kappa_1 = \lo^{ij} \nabla^k \overline{W}_{kij0}.
$$
In other words, we have
$$
   \kappa_1 - \kappa_2 =  \lo^{ij} \nabla^k \overline{W}_{kij0}.
$$
The proof is complete.
\end{proof}

One should compare this result with \cite[(2.12)]{GGHW}.

\begin{cor}\label{kappa-2-form}
\begin{equation}\label{K2}
    \kappa_2 = 3(\lo^2,\Rho) + \J |\lo|^2 - \lo^{ij} \nabla^k \overline{W}_{kij0}.
\end{equation}
\end{cor}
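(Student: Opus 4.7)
The corollary is an immediate combination of the two preceding results, so my proof plan is essentially a one-line observation.

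The plan is to substitute the value of $\kappa_1$ from Lemma \ref{kappa-1a} into the identity of Lemma \ref{diff-simple}. First I would write Lemma \ref{diff-simple} in the form $\kappa_2 = \kappa_1 - \lo^{ij}\nabla^k \overline{W}_{kij0}$. Then I would replace $\kappa_1$ by the explicit expression $3(\lo^2,\Rho) + \J|\lo|^2$ given by Lemma \ref{kappa-1a}. This yields the claimed formula \eqref{K2} directly, with no further computation required.

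Since the corollary is a purely formal consequence of two already-established identities, there is no genuine obstacle to overcome; the only thing to verify is sign consistency between the two lemmas (namely that $\kappa_1 - \kappa_2$, and not $\kappa_2 - \kappa_1$, equals the Weyl-divergence term), which is explicit in the statement of Lemma \ref{diff-simple}. Beyond that, no new geometric input is needed — all the conceptual work (commuting covariant derivatives to produce $\kappa_1$, and applying the trace-free Codazzi-Mainardi equation \eqref{tf-CM} to relate $\kappa_1$ to $\kappa_2$ through the Weyl term) has already been carried out.
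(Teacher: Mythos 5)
Your proposal is correct and is exactly how the paper obtains this corollary: it is stated as an immediate consequence of combining Lemma \ref{diff-simple} ($\kappa_1-\kappa_2=\lo^{ij}\nabla^k\overline{W}_{kij0}$) with the formula for $\kappa_1$ in Lemma \ref{kappa-1a}. Your sign check on the direction of $\kappa_1-\kappa_2$ is the only point of care, and you handled it correctly.
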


\begin{cor}\label{Laplace-L}
\begin{equation*}
    (\lo,\Delta(\lo)) = 3 (\lo,\Hess(H)) + 3 (\lo^2,\Rho) + \J |\lo|^2 + 3 \lo^{ij} \nabla_i (\bar{\Rho}_0)_j
    - \lo^{ij} \nabla^k \overline{W}_{kij0}.
\end{equation*}
\end{cor}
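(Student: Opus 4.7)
The proof is essentially a substitution: Corollary \ref{kappa-2-form} already expresses $\kappa_2$ in closed form, and the definition \eqref{M2} reads
$$
   (\lo,\Delta(\lo)) = \kappa_2 + \frac{3}{2} \lo^{ij} \nabla_j \delta(\lo)_i.
$$
So the only missing ingredient is an identity rewriting $\lo^{ij} \nabla_j \delta(\lo)_i$ in terms of $\Hess(H)$ and $\bar{\Rho}_0$. This is a contracted Bianchi-type statement and will come from tracing the Codazzi-Mainardi equation.

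First I would contract the full Codazzi equation $\nabla_j L_{ik} - \nabla_i L_{jk} = \bar{R}_{ijk0}$ with $h^{jk}$. The left-hand side becomes $\delta(L)_i - n \nabla_i H = \delta(L)_i - 3 \nabla_i H$, and a short symmetry calculation gives $h^{jk} \bar{R}_{ijk0} = \overline{\Ric}_{i0}$. Combined with $\delta(L) = \delta(\lo) + \nabla H$ (since $\delta(Hh)_i = \nabla_i H$), this produces
$$
   \delta(\lo)_i = 2 \nabla_i H + \overline{\Ric}_{i0}.
$$
Since the ambient dimension is $4$, the Schouten decomposition $\overline{\Ric} = 2 \bar{\Rho} + \bar{\J} g$ together with $g_{i0} = 0$ on $M$ yields $\overline{\Ric}_{i0} = 2 (\bar{\Rho}_0)_i$, and therefore
$$
   \tfrac{1}{2} \delta(\lo)_i = \nabla_i H + (\bar{\Rho}_0)_i.
$$

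Next I would differentiate this relation covariantly along $\partial_j$ and contract with $\lo^{ij}$. Using that $\lo$ is symmetric, so that $\lo^{ij} \nabla_j \nabla_i H = (\lo, \Hess(H))$ and $\lo^{ij} \nabla_j (\bar{\Rho}_0)_i = \lo^{ij} \nabla_i (\bar{\Rho}_0)_j$, one obtains
$$
   \frac{3}{2} \lo^{ij} \nabla_j \delta(\lo)_i = 3 (\lo, \Hess(H)) + 3 \lo^{ij} \nabla_i (\bar{\Rho}_0)_j.
$$
Substituting this and \eqref{K2} into the displayed expression for $(\lo,\Delta(\lo))$ from the opening paragraph gives exactly the claimed formula.

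No real obstacle is anticipated: all the non-trivial content is already packaged in Corollary \ref{kappa-2-form}, and the remaining work is just bookkeeping with sign conventions in the trace of the Codazzi equation and in relating $\overline{\Ric}_0$ to $\bar{\Rho}_0$ via the ambient Schouten identity in dimension $4$. The only place one has to be careful is confirming that the symmetries $\bar{R}_{abcd} = -\bar{R}_{bacd} = -\bar{R}_{abdc}$ give $h^{jk}\bar{R}_{ijk0} = +\overline{\Ric}_{i0}$ (not the opposite sign), which determines the sign in front of $(\bar{\Rho}_0)_i$ in the final identity.
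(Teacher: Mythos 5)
Your proposal is correct and follows essentially the same route as the paper's first proof of Corollary \ref{Laplace-L}: rewrite $(\lo,\Delta(\lo))$ via the definition \eqref{M2} of $\kappa_2$, convert $\tfrac{3}{2}\lo^{ij}\nabla_j\delta(\lo)_i$ using the traced Codazzi--Mainardi relation $\delta(\lo)=2\,dH+2\bar{\Rho}_0$ (which you rederive, including the identification $\overline{\Ric}_0=2\bar{\Rho}_0$ and the sign of $h^{jk}\bar{R}_{ijk0}$, while the paper simply cites Codazzi--Mainardi), and then insert Corollary \ref{kappa-2-form}. The bookkeeping, including the use of the symmetry of $\lo$ to swap the indices on $\nabla(\bar{\Rho}_0)$, is exactly as in the paper.
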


\begin{proof} We calculate
\begin{align*}
   (\lo,\Delta(\lo)) & = \frac{3}{2} \lo^{ij} \nabla_j  \delta(\lo)_i + \kappa_2 \qquad \mbox{(by \eqref{M2})} \\
   & = 3 (\lo,\Hess(H)) + 3 \lo^{ij} \nabla_j \bar{\Rho}_{0i} + \kappa_2 \qquad \mbox{(by Codazzi-Mainardi)}.
\end{align*}
Now we apply Corollary \ref{kappa-2-form}.
\end{proof}

Alternatively, we outline how Corollary \ref{Laplace-L} can be derived by using a Simons type formula.
First, we prove

\begin{lem}\label{pre-Simons} In general dimensions, it holds
$$
    \nabla_k \nabla_l (L)_{ij} = \nabla_i \nabla_j (L)_{kl} - \nabla_i \bar{R}_{kjl0}
    - \nabla_k \bar{R}_{lij0} + {{R_{ki}}^m}_l L_{mj} + {{R_{ki}}^m}_j L_{lm}.
$$
\end{lem}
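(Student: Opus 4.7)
The plan is to prove this purely by iterating the Codazzi--Mainardi equation and the Ricci identity for commuting covariant derivatives, so that every index manipulation is elementary. Since $L$ is symmetric we may rewrite the Codazzi--Mainardi equation from the preceding proofs in the form
$$
   \nabla_a L_{bc} = \nabla_b L_{ac} - \bar{R}_{abc0},
$$
which will be our only structural input besides the standard Ricci identity.

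First, I apply this identity to $\nabla_l L_{ij}$ in order to swap the slots $l$ and $i$, obtaining
$$
   \nabla_l L_{ij} = \nabla_i L_{lj} - \bar{R}_{lij0}.
$$
Applying $\nabla_k$ then yields
$$
   \nabla_k \nabla_l L_{ij} = \nabla_k \nabla_i L_{lj} - \nabla_k \bar{R}_{lij0},
$$
which already isolates one of the two curvature derivative terms on the right-hand side of the statement.

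Next I commute $\nabla_k$ and $\nabla_i$ using the Ricci identity on the $(0,2)$-tensor $L_{lj}$. With the sign convention of the paper, $[\nabla_k,\nabla_i] T_{lj} = -{R_{kil}}^m T_{mj} - {R_{kij}}^m T_{lm}$, and converting to the ${R_{ki}}^m{}_l$ form via the antisymmetry in the last two indices flips the sign, so
$$
   \nabla_k \nabla_i L_{lj} = \nabla_i \nabla_k L_{lj} + {R_{ki}}^m{}_l L_{mj} + {R_{ki}}^m{}_j L_{lm}.
$$
Then I use Codazzi--Mainardi a second time, together with the symmetry of $L$, to rewrite $\nabla_k L_{lj} = \nabla_k L_{jl}$ by swapping $k$ and $j$:
$$
   \nabla_k L_{lj} = \nabla_j L_{kl} - \bar{R}_{kjl0}.
$$
Applying $\nabla_i$ gives $\nabla_i \nabla_k L_{lj} = \nabla_i \nabla_j L_{kl} - \nabla_i \bar{R}_{kjl0}$, and substituting back produces exactly the claimed identity.

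The proof is entirely mechanical, and the only real obstacle is bookkeeping: I need to keep the sign conventions consistent with those fixed in Section \ref{not}, in particular the difference between ${R_{kil}}^m$ and ${R_{ki}}^m{}_l$ (which differ by a sign due to the antisymmetry $R_{abcd} = -R_{abdc}$), and the antisymmetry $\bar{R}_{jkl0} = -\bar{R}_{kjl0}$ that produces the correct sign in front of $\nabla_i \bar{R}_{kjl0}$. Once those sign choices are pinned down, each of the two applications of Codazzi--Mainardi contributes one of the $\nabla \bar{R}$ terms, and the single commutator produces the two curvature--$L$ contractions.
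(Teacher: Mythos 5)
Your proof is correct and is essentially the paper's own argument: the paper likewise differentiates the Codazzi--Mainardi equation twice (once in the form $\nabla_i L_{lj}-\nabla_l L_{ij}=\bar R_{lij0}$, once in the form $\nabla_j L_{kl}-\nabla_k L_{jl}=\bar R_{kjl0}$) and commutes $\nabla_k$ and $\nabla_i$ on $L_{lj}$ with exactly the commutator identity you quote, the only difference being that the paper adds the two differentiated identities while you substitute one into the other. Your sign bookkeeping (the ${R_{ki}}^m{}_l$ versus ${R_{kil}}^m$ conversion and the antisymmetry of $\bar R$) is consistent with the paper's conventions, so no gap.
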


\begin{proof} We start with the Codazzi-Mainardi equation
$$
    \nabla_i(L)_{lj} - \nabla_l (L)_{ij} = \bar{R}_{lij0}.
$$
Differentiation gives
$$
    \nabla_k \nabla_i (L)_{lj} - \nabla_k \nabla_l (L)_{ij} = \nabla_k \bar{R}_{lij0}.
$$
Now we commute the derivatives in the first term using
$$
     \nabla_k \nabla_i (L)_{lj} - \nabla_i \nabla_k (L)_{lj} = {{R_{ki}}^m}_l L_{mj} + {{R_{ki}}^m}_j L_{lm}.
$$
Hence
\begin{equation}\label{CM1}
   \nabla_i \nabla_k (L)_{lj} - \nabla_k \nabla_l (L)_{ij} = \nabla_k \bar{R}_{lij0}
   - {{R_{ki}}^m}_l L_{mj} - {{R_{ki}}^m}_j L_{lm}.
\end{equation}
Similarly, we differentiate the Codazzi-Mainardi equation
$$
   \nabla_j (L)_{kl} - \nabla_k (L)_{jl} = \bar{R}_{kjl0}
$$
and obtain
\begin{equation}\label{CM2}
   \nabla_i \nabla_j (L)_{kl} - \nabla_i \nabla_ k (L)_{jl} = \nabla_i \bar{R}_{kjl0}.
\end{equation}
Adding \eqref{CM1} and \eqref{CM2} proves the assertion.
\end{proof}

Lemma \ref{pre-Simons} implies
\begin{align*}
   (\nabla^i \nabla^j (L)_{ik} - \nabla^j \nabla^i (L)_{ik}) L^k_j
   & =  (\nabla^i \nabla^j (L)_{ik} - \nabla^k \nabla^i (L)_{ij}) L^k_j \quad \mbox{(by the symmetry of $L$)} \\
   & = R_{ikmj} L^{mi} L^{kj} + {R_{ikm}}^i L^{jm} L^k_j.
\end{align*}
One can easily check that this identity also holds if $L$ is replaced by $\lo$. This reproves \eqref{kappa-curv}.

Now taking a trace in Lemma \ref{pre-Simons} gives

\begin{lem}\label{pre-Simons-trace} In general dimensions, it holds
$$
   \Delta (L)_{ij} = n \Hess_{ij}(H) - \nabla^k \bar{R}_{kij0} + \nabla_i (\overline{\Ric}_0)_j
   + {{R_{ik}}^k}_m L_j^m - R_{kijm} L^{km}.
$$
\end{lem}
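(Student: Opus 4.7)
My plan is to trace Lemma \ref{pre-Simons} by contracting the free indices $k$ and $l$ with $h^{kl}$; the left-hand side then equals $\Delta L_{ij}$ by definition of the Bochner Laplacian, and the five terms on the right-hand side should match, term by term, the five terms of the asserted formula.

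First I would dispatch the two straightforward contributions. Since $\nabla h = 0$, the trace $h^{kl} \nabla_i \nabla_j L_{kl}$ commutes with the derivatives and becomes $\nabla_i \nabla_j (\tr_h L) = n \Hess_{ij}(H)$. Similarly, $-h^{kl} \nabla_k \bar{R}_{lij0}$ equals $-\nabla^k \bar{R}_{kij0}$ after relabeling the dummy index.

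The remaining three terms involve curvature contractions, so I would pin down the sign conventions at the outset. Reading off from \eqref{KN}, the paper's convention is $\Ric_{jk} = g^{il} R_{ijkl}$, a trace of the Riemann tensor over its first and fourth slots; combined with the standard symmetries $R_{abcd} = -R_{bacd} = -R_{abdc} = R_{cdab}$, this yields $-\Ric$ under a first-and-third trace and $+\Ric$ under a second-and-third trace. Applied to the remaining ambient curvature term, this gives $h^{kl} \bar{R}_{kjl0} = -(\overline{\Ric}_0)_j$, so that $-h^{kl} \nabla_i \bar{R}_{kjl0}$ contributes $+\nabla_i (\overline{\Ric}_0)_j$. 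For the first intrinsic curvature term, $h^{kl} R_{ki}{}^m{}_l = g^{mp} h^{kl} R_{kipl}$ is a first-and-fourth trace, producing $\Ric_i{}^m$; this coincides with $R_{ik}{}^k{}_m$ (the same Ricci, rewritten using the antisymmetry of the first pair of slots as a second-and-third trace), giving the term $R_{ik}{}^k{}_m L_j{}^m$.

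For the final term, I would absorb $h^{kl} L_{lm} = L^k{}_m$ into the contraction, obtaining $L^k{}_m R_{ki}{}^m{}_j = L^{km} R_{kimj}$, and then apply the antisymmetry $R_{kimj} = -R_{kijm}$ in the last two slots to rewrite it as $-R_{kijm} L^{km}$. Assembling the five contributions produces the stated identity. The hard part is purely bookkeeping: tracking the sign conventions for the various Ricci-type traces of the Riemann tensor so that they combine consistently in the final line.
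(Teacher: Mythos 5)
Your proposal is correct and is exactly the paper's argument: the paper proves this lemma simply by contracting Lemma \ref{pre-Simons} with $h^{kl}$, which is what you carry out, with the sign conventions (first-and-fourth trace giving $+\Ric$, as read off from \eqref{KN}) handled correctly in each term. The only unstated micro-step, that the tangential trace $h^{kl}\bar{R}_{kjl0}$ agrees with the full ambient trace because $\bar{R}_{0j00}=0$, is immediate from the antisymmetries and does not affect the argument.
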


In particular, this gives a

\begin{proof}[Second proof of  Corollary \ref{Laplace-L}.]
Lemma \ref{pre-Simons-trace} and $(L,\Delta (L)) = (\lo,\Delta(\lo)) + 3 H \Delta (H)$ imply
\begin{align*}
   (\lo,\Delta(\lo)) & = 3 (\lo,\Hess (H)) + L^{ij} \nabla_i (\overline{\Ric}_0)_j
   - L^{ij} \nabla^k \bar{R}_{kij0} + L^{ij} {{R_{ik}}^k}_m  L_j^m - L^{ij} R_{kijm} L^{km} \\
   & =  3 (\lo,\Hess (H)) + (L^2)^{im} {{R_{ik}}^k}_m - L^{ij} L^{kl} R_{kijl} +  L^{ij} \nabla_i (\overline{\Ric}_0)_j
   - L^{ij} \nabla^k \bar{R}_{kij0}.
\end{align*}
Now
\begin{equation*}
    (L^2)^{im} {{R_{ik}}^k}_m = (L^2)^{ij} \Ric_{ij} = (L^2,\Rho) + \J |L|^2,
\end{equation*}
and \eqref{KN} implies
\begin{align*}
     L^{ij} L^{kl} R_{kijl} & = - 2 (L^2,\Rho) + 6 H (L,\Rho).
\end{align*}
Hence
\begin{align*}
   (\lo,\Delta(\lo)) = 3 (\lo,\Hess (H))  + 3 (L^2,\Rho) + \J |L|^2 - 6 H (L,\Rho) + 2 L^{ij} \nabla_i (\bar{\Rho}_0)_j
   - L^{ij} \nabla^k \bar{R}_{kij0}.
\end{align*}
In order to simplify this formula, we note that
$$
   3 (L^2,\Rho) + \J |L|^2 - 6 H (L,\Rho) = 3 (\lo^2,\Rho) + \J |\lo|^2
$$
and
$$
    \nabla^k \bar{R}_{kij0} = \nabla^k (\bar{\Rho}_0)_k h_{ij} - \nabla_j (\bar{\Rho}_0)_i + \nabla^k \overline{W}_{kij0}.
$$
Hence
\begin{align*}
   (\lo,\Delta(\lo)) = 3 (\lo,\Hess (H))  + 3 (\lo^2,\Rho) + \J |\lo|^2 + 3 L^{ij} \nabla_i (\bar{\Rho}_0)_j
   - 3H  \nabla^k (\bar{\Rho}_0)_k - L^{ij} \nabla^k \overline{W}_{kij0}.
\end{align*}
The last term is unchanged if we replace $L$ by $\lo$. This completes the proof.
\end{proof}

Combining Lemma \ref{pre-Simons} with some arguments using the Gauss formula relating the curvature tensors
of $X$ and $M$ leads to the following well-known identities which are due to Simons \cite{Simons, SchSY, HP,V}.

\begin{prop}\label{S-Id-g}
For any hypersurface $M^n \hookrightarrow X^{n+1}$ with the second fundamental form $L$, it holds
\begin{align}\label{S-I}
    \nabla_i \nabla_j (L)_{kl} & = \nabla_k \nabla_l (L)_{ij} + L_{ij} L^2_{kl} - L_{kl} L^2_{ij}
   + L_{il} L^2_{jk} - L_{jk} L^2_{il} \notag \\
    & - L_i^m \bar{R}_{jklm} - L_j^m \bar{R}_{iklm} + L_k^m \bar{R}_{lijm} + L_l^m \bar{R}_{kijm} \notag \\
    & + L_{ij} \bar{R}_{0kl0} - L_{kl} \bar{R}_{0ij0} + \bar{\nabla}_i (\bar{R})_{kjl0} + \bar{\nabla}_k (\bar{R})_{lij0}.
\end{align}
Taking a trace gives
\begin{align}\label{S-II}
   \Delta (L)_{ij} & = n \Hess_{ij}(H) + n H L^2_{ij} - L_{ij} |L|^2\notag \\
   & +  L^s_j \bar{R}_{ikks} + L_i^s \bar{R}_{jkks} - 2 L^{rs}  \bar{R}_{rijs} \notag \\
   & + n H \bar{R}_{0ij0} - L_{ij} \overline{\Ric}_{00} + \bar{\nabla}_k (\bar{R})_{ikj0} + \bar{\nabla}_i (\bar{R})_{jkk0}.
\end{align}
\end{prop}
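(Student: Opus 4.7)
The plan is to deduce \eqref{S-I} from Lemma \ref{pre-Simons} by two conversions: replacing the intrinsic curvature $R$ of $M$ by the ambient $\bar R$ via the Gauss equation, and replacing the intrinsic covariant derivative $\nabla$ by the ambient $\bar\nabla$ on tensors of shape $\bar R_{\cdot\cdot\cdot 0}$. After trivial rearrangement, Lemma \ref{pre-Simons} already produces the term $\nabla_k \nabla_l L_{ij}$ on the right-hand side, plus $+\nabla_i \bar R_{kjl0} + \nabla_k \bar R_{lij0}$ and two intrinsic-curvature terms $-R_{ki}{}^m{}_l L_{mj} - R_{ki}{}^m{}_j L_{lm}$. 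Only these last four terms require further processing.

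The first conversion uses the Gauss equation $R_{ijkl} = \bar R_{ijkl} - L_{ik} L_{jl} + L_{il} L_{jk}$. Its quadratic part, once contracted with $L$, produces exactly the quartic cluster $L_{ij} L^2_{kl} - L_{kl} L^2_{ij} + L_{il} L^2_{jk} - L_{jk} L^2_{il}$ appearing in \eqref{S-I}, while its $\bar R$ part supplies two of the four curvature contractions of shape $L_a^m \bar R_{bcdm}$. For the second conversion I derive, from the Gauss formula $\bar\nabla_i \partial_j = \nabla_i \partial_j - L_{ij} \partial_0$ on tangential slots and the Weingarten formula $\bar\nabla_i \partial_0 = L_i^m \partial_m$ on the normal slot, the identity
$$
    \nabla_i \bar R_{jkl0} = \bar\nabla_i \bar R_{jkl0} - L_{ij} \bar R_{0kl0} + L_{ik} \bar R_{0jl0} + L_i^m \bar R_{jklm},
$$
where the antisymmetry $\bar R_{jk00}=0$ eliminates one would-be correction. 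Substituting this in $\nabla_i \bar R_{kjl0} + \nabla_k \bar R_{lij0}$ and combining with the Gauss step yields \eqref{S-I}: the block $L_{ij}\bar R_{0kl0} - L_{kl}\bar R_{0ij0}$ emerges after noting that the two cross-correction terms involving $\bar R_{0jl0}$ cancel by the symmetry $\bar R_{0lj0} = \bar R_{0jl0}$, and the remaining $L_a^m\bar R_{bcdm}$ pieces combine with those from Gauss to yield the four such terms.

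For \eqref{S-II} I contract \eqref{S-I} with $h^{kl}$. The left side becomes $n\Hess_{ij}(H)$ since $h^{kl}L_{kl}=nH$, while $h^{kl}\nabla_k\nabla_l L_{ij}=\Delta L_{ij}$ is the Bochner Laplacian. The quartic block contracts to $nHL^2_{ij}-L_{ij}|L|^2$, the two $L^3_{ij}$ contributions canceling by symmetry of $L$. The four $L_a^m\bar R_{bcdm}$ terms reduce to $L^s_j\bar R_{ikks}+L_i^s\bar R_{jkks}-2L^{rs}\bar R_{rijs}$ using symmetries of $\bar R$; the block $L_{ij}\bar R_{0kl0}-L_{kl}\bar R_{0ij0}$ traces to $nH\bar R_{0ij0}-L_{ij}\overline{\Ric}_{00}$; and the differential terms contract to $\bar\nabla_k(\bar R)_{ikj0}+\bar\nabla_i(\bar R)_{jkk0}$ after applying the second Bianchi identity.

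The main obstacle throughout will be bookkeeping. The proof is purely algebraic once the two conversion identities are in hand, but every step demands careful tracking of signs, of the antisymmetries and pair-symmetries of $\bar R$, and of the correct moments to apply the Bianchi identities in order to bring the answer into the canonical form of \eqref{S-I} and \eqref{S-II}.
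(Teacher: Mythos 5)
Your proposal is correct and takes essentially the same route as the paper, which obtains Proposition \ref{S-Id-g} precisely by combining Lemma \ref{pre-Simons} with the Gauss equation and the Gauss--Weingarten conversion of $\nabla$ into $\bar{\nabla}$ on tensors of the form $\bar{R}_{\cdot\cdot\cdot 0}$ — you simply supply the details the paper leaves to the cited literature, and your conversion identity and cancellations check out in the paper's conventions. One minor point: in the trace step no second Bianchi identity is needed; after contracting with $h^{kl}$ and solving for $\Delta(L)_{ij}$, the terms $\bar{\nabla}_k(\bar{R})_{ikj0}+\bar{\nabla}_i(\bar{R})_{jkk0}$ follow directly from the antisymmetry of $\bar{R}$ in its first index pair.
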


For flat backgrounds, Proposition \ref{S-Id-g} specializes to

\begin{prop}\label{S-Id-flat}
For any hypersurface $M^n \hookrightarrow \R^{n+1}$ with the second fundamental form $L$, it holds
\begin{equation}\label{S-1}
   \nabla_i \nabla_j (L)_{kl} = \nabla_k \nabla_l(L)_{ij} + L_{ij} L^2_{kl} - L_{kl} L^2_{ij}
   + L_{il} L^2_{kj} - L_{kj} L^2_{il}.
\end{equation}
Hence
\begin{equation}\label{S-2}
   \Delta (L) = n  \Hess (H) +n H L^2 - L |L|^2
\end{equation}
and
\begin{equation}\label{S-2a}
   \frac{1}{2} \Delta (|L|^2) = n (L,\Hess (H)) + |\nabla L|^2 + n H \tr (L^3) - |L|^4.
\end{equation}
\end{prop}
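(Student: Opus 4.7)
The plan is to obtain each of the three statements as a direct specialization or consequence of Proposition \ref{S-Id-g}, which has already been established (and whose derivation via Lemma \ref{pre-Simons} plus the Gauss equation is referenced above). No independent curvature identity is needed for a flat background beyond setting every ambient curvature symbol to zero.

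First, \eqref{S-1} drops out of \eqref{S-I} by setting $\bar{R}=0$: all four terms on the second line, both $\bar{R}_{0kl0}$ and $\bar{R}_{0ij0}$ terms, and both $\bar{\nabla}(\bar{R})$ terms vanish identically, and what remains is exactly the listed block $L_{ij}L^2_{kl}-L_{kl}L^2_{ij}+L_{il}L^2_{jk}-L_{jk}L^2_{il}$. I would just note that $L^2_{jk}=L^2_{kj}$ to match the form displayed in \eqref{S-1}.

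Next, \eqref{S-2} follows by tracing \eqref{S-1} over the pair $(k,l)$ with $h^{kl}$. The left-hand side becomes $\nabla_i\nabla_j(\tr L)=n\,\Hess_{ij}(H)$, while on the right $h^{kl}\nabla_k\nabla_l L_{ij}=\Delta L_{ij}$, $h^{kl}L_{ij}L^2_{kl}=L_{ij}|L|^2$, and $h^{kl}L_{kl}L^2_{ij}=nH\,L^2_{ij}$. The two remaining terms contract as $h^{kl}L_{il}L^2_{kj}=(L^3)_{ij}$ and $h^{kl}L_{kj}L^2_{il}=(L^3)_{ij}$ and therefore cancel. Solving for $\Delta L_{ij}$ yields \eqref{S-2}. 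Alternatively, one can specialize \eqref{S-II} and check that all ambient-curvature terms vanish in the flat case.

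Finally, \eqref{S-2a} comes from the Bochner-type identity
\[
   \tfrac{1}{2}\Delta(|L|^2) = (L,\Delta L) + |\nabla L|^2,
\]
combined with \eqref{S-2}. Pairing \eqref{S-2} with $L$ in the natural inner product on symmetric $2$-tensors uses the elementary traces $(L,L^2)=L^{ij}L^k_i L_{kj}=\tr(L^3)$ and $(L,L)|L|^2=|L|^4$, which gives $(L,\Delta L)=n(L,\Hess H)+nH\tr(L^3)-|L|^4$, and substitution produces \eqref{S-2a}. There is no real obstacle: the only care required is to verify that the two $(L^3)_{ij}$ contributions in the trace step genuinely cancel (because of the symmetry of $L^2$), and to keep straight that $\Delta$ here is the Bochner Laplacian on symmetric $2$-tensors, as stipulated in Section \ref{not}.
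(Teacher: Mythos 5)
Your proposal is correct and follows the same route the paper intends: Proposition \ref{S-Id-flat} is obtained by specializing Proposition \ref{S-Id-g} to $\bar{R}=0$, tracing (noting the cancellation of the two $(L^3)_{ij}$ terms, or equivalently specializing \eqref{S-II}), and then applying the Bochner identity $\tfrac{1}{2}\Delta(|L|^2)=(L,\Delta L)+|\nabla L|^2$ together with $(L,L^2)=\tr(L^3)$. Nothing is missing.
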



\begin{rem}\label{kappa1-2}
The first part of Proposition \ref{S-Id-g} again confirms Lemma \ref{kappa-1a}. In fact, by the symmetry of $L$, we obtain
$$
   (\nabla^i \nabla^j (L)_{ki} - \nabla^j \nabla^i (L)_{ki}) L^k_j =  (\nabla^i \nabla^j (L)_{ki} - \nabla^k \nabla^i (L)_{ji}) L^k_j.
$$
In this identity one can replace $L$ by $\lo$. Hence \eqref{S-I} implies
$$
   \kappa_1 = (L^2)^{kl} \bar{R}_{kiil} - L^{il} L^{jk} \bar{R}_{kilj}.
$$
By the Gauss identity, we obtain
\begin{align*}
   \kappa_1 & =  3 H \tr(L^3) - |L|^4 + (L^2)^{kl} R_{kiil} - L^{il} L^{jk} R_{kilj} \\
   & +  (L^2)^{kl} (L_{ki} L_{il} - L_{kl} L_{ii}) - L^{il} L^{jk} (L_{kl} L_{ij} - L_{kj} L_{il}) \\
   &  = (L^2)^{kl} R_{kiil} - L^{il} L^{jk} R_{kilj}.
\end{align*}
The remaining arguments are as in the proof of Lemma \ref{kappa-1a}.
\end{rem}


\begin{rem}\label{Simons-flat}
For flat backgrounds, it holds $\kappa_2 = \kappa_1 = 3 H \tr(L^3) - |L|^4$ (Example \ref{kappa1-flat})
and the above results yield
\begin{equation*}\label{Id-1-flat}
    \frac{1}{2} \Delta (|\lo|^2) = 3 (\lo,\Hess(H)) + |\nabla \lo|^2 + 3 H \tr(L^3) - |L|^4
\end{equation*}
and
\begin{equation*}\label{Id-2-flat}
   \delta \delta (\lo^2) = 4 (\lo,\Hess(H)) + |\nabla \lo|^2 + 2 |dH|^2 + 3H \tr(L^3) - |L|^4.
\end{equation*}
As a consequence, we find the difference formula
\begin{equation}\label{basic-div}
   \frac{1}{2} \Delta (|\lo|^2) - \delta \delta (\lo^2) = - (\lo,\Hess(H)) - 2 |dH|^2.
\end{equation}
\end{rem}

Now combining Lemma \ref{Id-basic} with \eqref{M2}, we obtain
\begin{align*}
    \delta \delta (\lo^2) & = \frac{4}{3} (\lo,\Delta (\lo))
   + |\nabla \lo|^2+ \frac{1}{2} |\delta (\lo)|^2 - \frac{1}{2} |\overline{W}_{0}|^2 - \frac{4}{3} \kappa_2 + \kappa_1.
\end{align*}
Hence
\begin{align}\label{NEW}
   \delta \delta ((\lo^2)_\circ) & = \delta \delta (\lo^2) - \frac{1}{3} \Delta (|\lo|^2) \notag \\
   & = \delta \delta (\lo^2) - \frac{2}{3} |\nabla \lo|^2 - \frac{2}{3} (\lo,\Delta (\lo)) \notag \\
   & = \frac{2}{3} (\lo,\Delta (\lo)) + \frac{1}{3} |\nabla \lo|^2 + \frac{1}{2} |\delta (\lo)|^2
   - \frac{1}{2}|\overline{W}_{0}|^2 - \frac{1}{3} \kappa_1 + \frac{4}{3}(\kappa_1-\kappa_2).
\end{align}
Thus, using
$$
   (\Rho,(\lo^2)_\circ) = (\Rho,\lo^2) - \frac{1}{3} \J |\lo|^2,
$$
Lemma \ref{kappa-1a} and Lemma \ref{diff-simple}, we obtain

\begin{lem}\label{NEW3a}
\begin{align*}
  \delta \delta ((\lo^2)_\circ)  + (\Rho,(\lo^2)_\circ)
  & = \frac{2}{3} (\lo,\Delta \lo) + \frac{1}{3} |\nabla \lo|^2 + \frac{1}{2} |\delta (\lo)|^2 - \frac{2}{3} \J |\lo|^2 \\
  & -  \frac{4}{3}\lo^{ij} \nabla^k (\overline{W}_0)_{kij} - \frac{1}{2} |\overline{W}_{ikj0}|^2.
\end{align*}
\end{lem}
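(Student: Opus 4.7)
The proof is essentially an assembly of three ingredients already established in the paper, so the strategy is to combine them in the right order rather than to introduce any new geometric computation.

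My starting point is identity \eqref{NEW}, which already expresses
$$
   \delta \delta ((\lo^2)_\circ) = \frac{2}{3} (\lo,\Delta (\lo)) + \frac{1}{3} |\nabla \lo|^2 + \frac{1}{2} |\delta (\lo)|^2
   - \frac{1}{2}|\overline{W}_{0}|^2 - \frac{1}{3} \kappa_1 + \frac{4}{3}(\kappa_1-\kappa_2)
$$
in the desired form except for the two curvature-type remainders $-\tfrac{1}{3}\kappa_1$ and $\tfrac{4}{3}(\kappa_1-\kappa_2)$. The entire task is to rewrite these two remainders and to absorb the extra term coming from the Schouten contraction.

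First I would handle the Schouten contraction. By its very definition,
$$
   (\Rho,(\lo^2)_\circ) = (\Rho,\lo^2) - \tfrac{1}{3}\J |\lo|^2,
$$
and Lemma \ref{kappa-1a} gives $\kappa_1 = 3(\lo^2,\Rho) + \J|\lo|^2$, hence $(\Rho,\lo^2) = \tfrac{1}{3}\kappa_1 - \tfrac{1}{3}\J|\lo|^2$. Substituting yields
$$
   (\Rho,(\lo^2)_\circ) = \tfrac{1}{3}\kappa_1 - \tfrac{2}{3}\J |\lo|^2.
$$
Adding this to \eqref{NEW} the two $\kappa_1$ contributions cancel exactly, producing the term $-\tfrac{2}{3}\J|\lo|^2$ of the lemma. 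What remains on the right-hand side is the single curvature remainder $\tfrac{4}{3}(\kappa_1-\kappa_2)$.

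Finally, Lemma \ref{diff-simple} identifies $\kappa_1-\kappa_2$ with $\lo^{ij}\nabla^k \overline{W}_{kij0}$, so this last remainder becomes $\tfrac{4}{3}\lo^{ij}\nabla^k(\overline{W}_0)_{kij}$ (up to the sign convention governing the indexing of $\overline{W}_0$), and combining everything gives precisely the right-hand side of the lemma.

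There is no real obstacle: the only bookkeeping point worth checking carefully is that the $\kappa_1$ contributions from \eqref{NEW} and from $(\Rho,(\lo^2)_\circ)$ match with coefficients $-\tfrac{1}{3}$ and $+\tfrac{1}{3}$ respectively so as to cancel, and that the conversion of $\kappa_1-\kappa_2$ into the divergence of $\overline{W}_0$ is compatible with the index convention $(\overline{W}_0)_{kij}=\overline{W}_{kij0}$ used in the statement; both amount to unwinding the definitions. No new Codazzi--Mainardi or commutator identity is required beyond those already used to establish \eqref{NEW}, Lemma \ref{kappa-1a}, and Lemma \ref{diff-simple}.
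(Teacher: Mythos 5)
Your proposal is correct and coincides with the paper's own proof, which is exactly this assembly: combine \eqref{NEW} with the identity $(\Rho,(\lo^2)_\circ)=(\Rho,\lo^2)-\tfrac{1}{3}\J|\lo|^2$, use Lemma \ref{kappa-1a} to cancel the $\kappa_1$ contributions, and convert $\kappa_1-\kappa_2$ via Lemma \ref{diff-simple}. On the point you hedged: the computation gives $+\tfrac{4}{3}\lo^{ij}\nabla^k\overline{W}_{kij0}=-\tfrac{4}{3}\lo^{ij}\nabla^k\overline{W}_{ikj0}$, so with the paper's convention $(\overline{W}_0)_{kij}=\overline{W}_{kij0}$ the minus sign in the lemma's display is reconciled only after interchanging the first two (antisymmetric) indices, a swap the paper itself performs elsewhere (compare the step leading to \eqref{a-full}).
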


Lemma \ref{NEW3a} confirms \cite[Proposition 2.4]{GGHW} up to the sign of the term $|\overline{W}_{0}|^2$.

The following result extends the difference formula \eqref{basic-div} to general backgrounds. It will play an important
role in Section \ref{B3-general}.

\begin{lem}\label{diff-key-g} It holds
\begin{equation}\label{basic-diff-g}
    \Delta (|\lo|^2) - 2 \delta \delta (\lo^2)
    = - 2 (\lo,\Hess(H)) - 2(\lo,\nabla (\bar{\Rho}_0)) - |\delta(\lo)|^2
    - 2 \lo^{ij} \nabla^k \overline{W}_{kij0} + |\overline{W}_{0}|^2.
\end{equation}
\end{lem}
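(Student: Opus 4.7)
The plan is to combine the formula for $\delta\delta(\lo^2)$ from Lemma \ref{Id-basic} with the standard Bochner identity $\tfrac{1}{2}\Delta(|\lo|^2) = (\lo,\Delta\lo) + |\nabla\lo|^2$, which applies since $\lo$ is a symmetric trace-free $2$-tensor on $M$. Forming the combination $\Delta(|\lo|^2) - 2\delta\delta(\lo^2)$ is tailored to cancel the $|\nabla\lo|^2$ term that appears in \eqref{Id-2}, and one is left with
$$
   \Delta(|\lo|^2) - 2\delta\delta(\lo^2) = 2(\lo,\Delta\lo) - 4\lo^{jk}\nabla_j \delta(\lo)_k - |\delta(\lo)|^2 + |\overline{W}_0|^2 - 2\kappa_1.
$$
This reduces the problem to rewriting the first two terms on the right-hand side in terms of $\Hess(H)$, $\nabla(\bar{\Rho}_0)$, and the Weyl contraction $\lo^{ij}\nabla^k\overline{W}_{kij0}$.

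For this, I would use the definition \eqref{M2} of $\kappa_2$ to write $\lo^{jk}\nabla_j\delta(\lo)_k = \tfrac{2}{3}\bigl((\lo,\Delta\lo)-\kappa_2\bigr)$, so that the combination $2(\lo,\Delta\lo) - 4\lo^{jk}\nabla_j\delta(\lo)_k$ becomes $-\tfrac{2}{3}(\lo,\Delta\lo) + \tfrac{8}{3}\kappa_2$. Then Lemma \ref{diff-simple} gives $\kappa_2 = \kappa_1 - \lo^{ij}\nabla^k\overline{W}_{kij0}$, and Corollary \ref{Laplace-L} expresses
$$
   (\lo,\Delta\lo) = 3(\lo,\Hess(H)) + 3\lo^{ij}\nabla_i(\bar{\Rho}_0)_j + \kappa_1 - \lo^{ij}\nabla^k\overline{W}_{kij0}
$$
after invoking $\kappa_1 = 3(\lo^2,\Rho) + \J|\lo|^2$ from Lemma \ref{kappa-1a}. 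Substituting these two expressions one checks that the coefficients of $\kappa_1$ cancel (the contributions $-\tfrac{2}{3}\kappa_1 + \tfrac{8}{3}\kappa_1 - 2\kappa_1 = 0$), leaving exactly $-2(\lo,\Hess(H)) - 2(\lo,\nabla(\bar{\Rho}_0)) - 2\lo^{ij}\nabla^k\overline{W}_{kij0}$ as the remaining geometric content; combined with the $-|\delta(\lo)|^2 + |\overline{W}_0|^2$ that survived from the first step this is precisely \eqref{basic-diff-g}.

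The proof therefore requires no new geometric input beyond the identities already established in this section, and the main obstacle is purely bookkeeping: tracking the various coefficients in the combination $-\tfrac{2}{3}(\lo,\Delta\lo) + \tfrac{8}{3}\kappa_2 - 2\kappa_1$ so that $\kappa_1$ drops out and the Weyl-contraction terms combine with coefficient $-2$ rather than $\pm\tfrac{2}{3}$ or $\pm\tfrac{8}{3}$. A useful consistency check is the flat-background limit, where all $\overline{W}$, $\bar{\Rho}_0$ terms vanish and the identity should reduce to \eqref{basic-div} of Remark \ref{Simons-flat} — this fixes the overall sign conventions and confirms the coefficient of $(\lo,\Hess(H))$.
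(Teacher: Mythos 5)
Your proposal is correct and follows essentially the paper's own route: both arguments combine the Bochner formula $\Delta(|\lo|^2)=2(\lo,\Delta\lo)+2|\nabla\lo|^2$ with Lemma \ref{Id-basic} so that $|\nabla\lo|^2$ cancels, and then feed in Corollary \ref{Laplace-L} together with the $\kappa_1$-, $\kappa_2$-identities of Section \ref{second}. The only cosmetic difference is that the paper substitutes Codazzi--Mainardi, $\delta(\lo)=2dH+2\bar{\Rho}_0$, directly into the term $2\lo_{jk}\nabla^j\delta(\lo)^k$, whereas you eliminate that term via the definition \eqref{M2} of $\kappa_2$ and Lemma \ref{diff-simple}; this is equivalent bookkeeping, and your coefficient check (the cancellation of $\kappa_1$, the coefficient $-2$ on $\lo^{ij}\nabla^k\overline{W}_{kij0}$, and the flat-background comparison with \eqref{basic-div}) is accurate.
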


\begin{proof} We recall that
\begin{align*}\label{Simons-I}
   \Delta (|\lo|^2) & = 2 (\lo,\Delta(\lo)) + 2 |\nabla (\lo)|^2 \notag \\
   & = 6 (\lo,\Hess(H)) + 2 \kappa_1 + 6 (\lo,\nabla (\bar{\Rho}_{0}))
   - 2 \lo^{ij} \nabla^k \overline{W}_{kij0} + 2 |\nabla (\lo)|^2
\end{align*}
(by Lemma \ref{kappa-1a} and Lemma \ref{Laplace-L}) and
\begin{equation*}\label{Simons-II}
   2 \delta \delta (\lo^2) = 8 (\lo, \Hess(H)) + 8 (\lo,\nabla (\bar{\Rho}_0)) + 2 |\nabla (\lo)|^2
   + |\delta(\lo)|^2 - |\overline{W}_{0}|^2 + 2 \kappa_1
\end{equation*}
(by \eqref{dd-eval} and $\delta(\lo) = 2 dH + 2 \bar{\Rho}_0$ (Codazzi-Mainardi)). The difference of both
sums equals
\begin{equation}\label{diff-ex}
   -2 (\lo,\Hess(H)) - 2  (\lo,\nabla (\bar{\Rho}_0)) - |\delta(\lo)|^2 - 2 \lo^{ij} \nabla^k \overline{W}_{kij0}
  + |\overline{W}_{0}|^2.
\end{equation}
The proof is complete.
\end{proof}

Note that the left-hand side of \eqref{basic-diff-g} is a total divergence, i.e., integrates to $0$ on a closed $M$.
The fact that the sum of the first three terms on the right-hand side of \eqref{basic-diff-g} is a total divergence
follows by partial integration and the Codazzi-Mainardi equation. In fact, for closed $M$, we calculate 
\begin{align*}
    & \int_M -2 (\lo,\Hess(H)) - 2(\lo,\nabla (\bar{\Rho}_0)) - |\delta(\lo)|^2 dvol_h \\
    & = \int_M 2 (\delta(\lo),dH) + 2( \delta(\lo),\bar{\Rho}_0) - |\delta (\lo)|^2 dvol_h = 0
\end{align*}
by $2dH + \bar{\Rho}_0 = \delta(\lo)$. The fact that the additional terms on the right-hand side of \eqref{basic-diff-g}
also form a total divergence can be seen as follows. Partial integration gives
\begin{align*}
    - 2 \int_M \lo^{ij} \nabla^k \overline{W}_{kij0} dvol_h = 2 \int_M \nabla^k (\lo)^{ij} \overline{W}_{kij0} dvol_h.
\end{align*}
By the trace-free part of the Codazzi-Mainardi equation  \label{total}
$$
   \nabla_k (\lo)_{ij} - \nabla_i (\lo)_{kj} - \frac{1}{2} \delta(\lo)_i h_{kj} + \frac{1}{2} \delta(\lo)_k h_{ij}
   =  \overline{W}_{ikj0} = - \overline{W}_{kij0}
$$
and partial integration, this integral equals
\begin{align*}
   & 2 \int_M \nabla^i (\lo)^{kj} \overline{W}_{kij0} dvol_h - 2 \int_M \overline{W}^{kij0} \overline{W}_{kij0} dvol_h \\
   & = - 2 \int_M \lo^{kj} \nabla^i \overline{W}_{kij0} dvol_h -  2 \int_M |\overline{W}_{kij0}|^2 dvol_h
\end{align*}
Hence
$$
   \int_M (-4 \lo^{ij} \nabla^k \overline{W}_{kij0} + 2 |\overline{W}_{kij0}|^2)  dvol_h = 0.
$$
This proves the claim.

\section{The singular Yamabe problem and the obstruction}\label{SYP}

The material in this section rests on \cite{ACF} and \cite{GW-LNY}. 

Let $(X^{n+1},g)$ be a compact manifold with boundary $M$ of dimension $n$. The singular Yamabe problem asks
to find a defining function $\sigma$ of $M$ so that
\begin{equation}\label{syp}
   \scal (\sigma^{-2}g) = -n(n+1).
\end{equation}
The conformal transformation law of scalar curvature shows that
$$
    \scal(\sigma^{-2}g) = -n(n+1) |d\sigma|_g^2 + 2n \sigma \Delta_g(\sigma) + \sigma^2 \scal(g).
$$
Following \cite{GW-LNY}, we write this equation in the form
$$
   \scal(\sigma^{-2}g) = -n(n+1) \SC(g,\sigma),
$$
where                    \index{$\SC(g,\sigma)$} 
$$
   \SC(g,\sigma) \st |d\sigma|_g^2 + 2 \rho \sigma, \quad (n+1) \rho \st - \Delta_g(\sigma) - \sigma \J 
   \quad \mbox{and} \quad 2n \J = \scal(g).
$$
In these terms, $\sigma$ is a solution of \eqref{syp} iff $\SC(g,\sigma)=1$. Although such $\sigma$ exist
and are unique, in general, $\sigma$ is not smooth up to the boundary. The
smoothness is obstructed by a locally determined conformally invariant scalar
function on $M$ which is called the singular Yamabe obstruction.

In order to describe the structure of a solution $\sigma$ of the singular Yamabe problem more precisely, we
use geodesic normal coordinates. Let $r$ be the distance function of $M$ for the background metric $g$. Then
there are uniquely determined coefficients $\sigma_{(k)} \in C^\infty(M)$ for $2 \le k \le n+1$ so that the
smooth defining function
\begin{equation}\label{sigma-finite}
   \sigma_F \st  r + \sigma_{(2)} r^2 + \dots + \sigma_{(n+1)} r^{n+1}
\end{equation}
satisfies
\begin{equation}\label{Yamabe-finite}
   \SC(g,\sigma_F) = 1 + R r^{n+1}
\end{equation}
with a smooth remainder term $R$. The coefficients are recursively determined. In geodesic normal coordinates,
the metric $g$ takes the form $dr^2 + h_r$ with a one-parameter family $h_r$ of metrics on $M$. The condition
\eqref{Yamabe-finite} is equivalent to
$$
  |d\sigma_F|_g^2 - \frac{2}{n+1} \sigma_F \Delta_g(\sigma_F) - \frac{1}{n(n+1)} \sigma_F^2 \scal(g)
  = 1 +  R r^{n+1}.
$$
We write the left-hand side of this equation in the form
\begin{align}\label{Y-F}
  & \partial_r(\sigma_F)^2 + h_r^{ij} \partial_i (\sigma_F) \partial_j (\sigma_F) \notag \\
  & - \frac{2}{n+1} \sigma_F \left (\partial_r^2 (\sigma_F) + \frac{1}{2} \tr (h_r^{-1} h_r') \partial_r (\sigma_F)
  + \Delta_{h_r} (\sigma_F) \right) - \frac{1}{n(n+1)} \sigma_F^2 \scal(g)
\end{align}
and expand this sum into a Taylor series in the variable $r$. Then the vanishing of
the coefficient of $r^k$ for $k \le n$ is equivalent to an identity of the form
$$
   (k-1-n) \sigma_{(k+1)} = LOT,
$$
where $LOT$ involves only lower-order Taylor coefficients of $\sigma$. The latter relation
also indicates that there is a possible obstruction to the existence of an improved
solution $\sigma_F'$ which contains a term $\sigma_{(n+2)} r^{n+2}$ and satisfies
$\SC(g,\sigma_F') = 1 + R r^{n+2}$. Following \cite{ACF}, we define the
{\em singular Yamabe obstruction} by      \index{$\B_n$ \quad singular Yamabe obstruction}
\begin{equation}\label{B-def}
   \B_n \st \left( r^{-n-1} (\SC(g,\sigma_F) - 1) \right)|_{r=0} .
\end{equation}
Since $\sigma_F$ is determined by $g$, $\B_n$ is a functional of $g$. It is a key result that
$\B_n$ is a conformal invariant of $g$ of weight $-(n+1)$.  More precisely, we write $\hat{\B}_n$ 
for the obstruction defined by $\hat{g}=e^{2\varphi} g$ with $\varphi \in C^\infty(X)$. Then

\begin{lem}\label{B-CTL} $e^{(n+1) \iota^*(\varphi)} \hat{\B}_n = \B_n$.
\end{lem}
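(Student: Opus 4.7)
The plan is to exploit the conformal covariance of the singular Yamabe operator $\SC$ itself. The identity $(e^\varphi \sigma)^{-2}(e^{2\varphi}g) = \sigma^{-2}g$, combined with the transformation law $\scal(\sigma^{-2}g) = -n(n+1)\SC(g,\sigma)$, gives $\SC(\hat g, e^\varphi \sigma) = \SC(g,\sigma)$ for any defining function $\sigma$. Setting $\tilde\sigma := e^\varphi \sigma_F$ therefore yields
\begin{equation*}
   \SC(\hat g, \tilde \sigma) = \SC(g, \sigma_F) = 1 + R\, r^{n+1},\qquad R|_M = \B_n,
\end{equation*}
where $r$ denotes the $g$-distance to $M$.

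Next I would compare $\tilde\sigma$ with the approximate $\hat g$-solution $\hat\sigma_F$ expanded in the $\hat g$-distance $\hat r$. Since $\hat r = e^{\iota^*\varphi} r + O(r^2)$ near $M$, the defining function $\tilde\sigma = e^\varphi \sigma_F$ has leading expansion $\tilde\sigma = \hat r + O(\hat r^2)$, matching that of $\hat\sigma_F$. By the covariance above we also have $\SC(\hat g, \tilde\sigma) - 1 = O(\hat r^{n+1})$. Since the indicial equations $(k-1-n)\sigma_{(k+1)} = LOT$ for $k = 0,\ldots,n$ determine the Taylor coefficients of any such $\hat g$-approximate solution uniquely through order $n+1$, we infer
\begin{equation*}
   \delta := \hat\sigma_F - \tilde\sigma = O(\hat r^{n+2}).
\end{equation*}

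The crux is then to verify $\SC(\hat g, \hat\sigma_F) - \SC(\hat g, \tilde\sigma) = O(\hat r^{n+2})$, so that the two $\hat r^{n+1}$ coefficients agree. Writing $\delta = \alpha\, \hat r^{n+2}$ and linearizing in $\delta$, the only terms that could contribute at order $\hat r^{n+1}$ are $2\hat g(d\tilde\sigma, d\delta)$ and $-\tfrac{2}{n+1}\tilde\sigma\, \Delta_{\hat g}\delta$. Using $d\tilde\sigma|_M = d\hat r|_M$ together with $\tilde\sigma = \hat r + O(\hat r^2)$, these produce $2(n+2)\alpha|_M\, \hat r^{n+1}$ and $-2(n+2)\alpha|_M\, \hat r^{n+1}$, which cancel; the remaining pieces of the variation (the quadratic terms $|d\delta|^2$, $\delta\Delta\delta$ and the $\sigma^2\scal$ contribution) are manifestly of order $\hat r^{n+2}$ or higher. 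This cancellation is precisely the indicial-exponent degeneracy at $k = n+1$ that makes $\sigma_{(n+2)}$ indeterminate and singles out $\B_n$ as the obstruction. To finish, I would use $r^{n+1} = e^{-(n+1)\iota^*\varphi}\, \hat r^{n+1} + O(\hat r^{n+2})$ to convert the $r^{n+1}$ coefficient on the $g$-side into the $\hat r^{n+1}$ coefficient on the $\hat g$-side, obtaining $\hat\B_n = e^{-(n+1)\iota^*\varphi}\B_n$, which is the claimed identity.

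The main obstacle is the cancellation described in the third paragraph: one must see that the two potentially offending second-order contributions to the linearization combine with coefficient zero at the critical order $\hat r^{n+1}$. It is a short computation in geodesic normal coordinates for $\hat g$, but it is exactly the step that encodes the conformal weight $-(n+1)$ of $\B_n$.
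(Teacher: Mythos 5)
Your proposal is correct, and in fact it supplies a proof where the paper gives none: Lemma \ref{B-CTL} is only stated in Section \ref{SYP}, with the conformal invariance of $\B_n$ attributed to the references \cite{ACF}, \cite{GW-LNY}, so there is no in-paper argument to compare with. Your route is the standard one and all steps check out: (i) the exact covariance $\SC(\hat g, e^\varphi\sigma)=\SC(g,\sigma)$ follows immediately from $(e^\varphi\sigma)^{-2}\hat g=\sigma^{-2}g$; (ii) since $\tilde\sigma=e^\varphi\sigma_F$ vanishes on $M$ with normal $\hat g$-derivative $1$ and satisfies $\SC(\hat g,\tilde\sigma)-1=O(\hat r^{n+1})$, the recursion with nonvanishing indicial factors (in the paper's notation, the coefficients $c(n-k+2)\neq 0$ for $k\le n+1$) pins down its Taylor coefficients through order $n+1$, giving $\hat\sigma_F-\tilde\sigma=O(\hat r^{n+2})$; (iii) your cancellation $2(n+2)\alpha-\tfrac{2}{n+1}(n+2)(n+1)\alpha=0$ is exactly the vanishing of the indicial factor at order $n+2$, i.e.\ the well-definedness of the $\hat r^{n+1}$ coefficient under $O(\hat r^{n+2})$ changes of the defining function, which is what the definition \eqref{B-def} implicitly relies on; (iv) the factor $e^{-(n+1)\iota^*\varphi}$ then comes from $r=e^{-\iota^*\varphi}\hat r+O(\hat r^2)$. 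Two cosmetic remarks: the linear term $-\tfrac{2}{n+1}\delta\,\Delta_{\hat g}\tilde\sigma$ should also be listed among the harmless pieces (it is manifestly $O(\hat r^{n+2})$, like the quadratic and scalar-curvature terms you mention), and in step (ii) you should say explicitly that the tangential part of $d\tilde\sigma$ is $O(\hat r)$, so only $\partial_{\hat r}\tilde\sigma\,\partial_{\hat r}\delta$ contributes at the critical order; with these spelled out the argument is complete.
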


\index{$S_k$}

Let us be a bit more precise about the above algorithm. We set $S_k = \sum_{j=1}^k \sigma_{(j)}$
so that $S_{n+1} = \sigma_F$. Then the coefficients of $\sigma_F$ are recursively determined by the 
conditions
\begin{equation*}
   \SC(S_k)  = 1 + O(r^k).
\end{equation*}
More precisely, we recursively find
$$
   \SC(S_k) = 1 + r^{k-1} (c (n-k+2) \sigma_{(k)} + \cdots) + \cdots
$$
with $c = 2k/(n+1)$. Then the condition $\SC(S_k) -1 = O(r^k)$ with an unknown
coefficient $\sigma_{(k)}$ is satisfied iff the coefficient of $r^{k-1}$ in this expansion vanishes. This
can be solved for $\sigma_{(k)}$ if $k =2,3,\dots,n+1$. In the case $k=n+1$, we obtain
$$
   \SC(S_{n+1}) = 1 + O(r^{n+1})
$$
and the restriction of the latter remainder is the obstruction $\B_{n+1}$.

In the following, we shall need explicit formulas for the coefficients $\sigma_{(k)}$ for $k \le 4$.

First, we consider flat backgrounds.

We approximately solve the equation $\SC(g,\sigma_F) = 1$ for the flat metric $g$ by differentiation of the relation
\begin{align}\label{Y-F-flat}
  \partial_r(\sigma_F)^2 + h_r^{ij} \partial_i (\sigma_F) \partial_j (\sigma_F)
  - \frac{2}{n+1} \sigma_F \left (\partial_r^2 (\sigma_F) + \frac{1}{2} \tr (h_r^{-1} h_r') \partial_r (\sigma_F)
  + \Delta_{h_r} (\sigma_F) \right) = 0
\end{align}
in the variable $r$. Then, for general $n \ge 3$, we find the solution
\begin{equation}\label{sigma-F-flat}
   \sigma_F = r + \frac{r^2}{2} H - \frac{r^3}{3(n-1)} |\lo|^2 + r^4 \sigma_{(4)} + \cdots
\end{equation}
with the coefficient
\begin{equation}\label{sigma4-flat}
   \sigma_{(4)} = \frac{1}{24(n-2)} \left(6 \tr(\lo^3) + \frac{7n-11}{n-1} H |\lo|^2 + 3 \Delta(H)\right).
\end{equation}
Note that $\sigma_{(3)}$ is singular for $n=1$ and $\sigma_{(4)}$ is singular for $n=2$. In particular, we have
\begin{equation*}
   \sigma_F = r + \frac{r^2}{2} H - \frac{r^3}{6} |\lo|^2
   + \frac{r^4}{24} \left( 6 \tr (\lo^3) + 5 H |\lo|^2 + 3 \Delta (H) \right) + \cdots
\end{equation*}
if $n=3$ (\cite[(2.16)-(2.18]{GG}). These results are determined by the conditions
\begin{align*}
   \SC(S_2)  = 1 + O(r^2), \quad  \SC(S_3) = 1 + O(r^3), \quad \SC(S_4)  = 1 + O(r^4).
\end{align*}

In particular, the obstructions $\B_2$ and $\B_3$ are the restrictions of the remainder terms in
the second and the third expansions. More precisely, for $\B_2$ ($n=2$), we find
\begin{equation}\label{B2-new}
   \B_2 = (r^{-3}(\SC(S_3) -1))|_0 = -\frac{1}{3} (H |\lo|^2 + \Delta (H)) - \frac{2}{3} \tr(\lo^3).
\end{equation}
Since for $n=2$ the term $\tr(\lo^3)$ vanishes, we get
$$
   \B_2 = -\frac{1}{3} (H |\lo|^2 + \Delta(H)) .
$$

Similarly, for $n=3$, we get
\begin{align*}
   \B_3 & = (r^{-4}(\SC(S_4) -1))|_0 \\
   & = \frac{1}{12} (|\lo|^4 - 6 H \Delta(H) + \Delta (|\lo|^2) + 6 H \tr (\lo^3) + 3 |dH|^2 - 3 \Delta' (H)).
\end{align*}
where $\Delta_{h_r} = \Delta_h + r \Delta'_h + \cdots$. By the variation formula $\Delta'(u) = -2 (L,\Hess(u)) 
- 3 (dH,du)$ (see the proof of Lemma \ref{last-line}), this leads to
\begin{equation}\label{B3-flat}
   12 \B_3 = \Delta (|\lo|^2) + 6 (\lo,\Hess(H))  + |\lo|^4  + 6 H \tr(\lo^3) + 12 |dH|^2.
\end{equation}

For general background, we shall express the coefficients $\sigma_{(k)}$ in terms of
the volume coefficients $v_k$ of $h_r$, which are defined by the expansion
$$
   v(r) = \sum_{k\ge 0} r^k v_k
$$
of
$$
   v(r) \st  dvol(h_r)/dvol(h) = ( \det (h_r)/\det(h) )^{\frac{1}{2}}.
$$
This is convenient since the identity
\begin{equation}\label{trace-vol}
   \frac{v'(r)}{v(r)} = \frac{1}{2} \tr (h_r^{-1} h_r')
\end{equation}
provides natural formulas for the expansion of the coefficient $\tr (h_r^{-1} h_r')$ in \eqref{Y-F}.

Note that for the background $\R^{n+1}$ it holds
\begin{equation}\label{vol-flat}
   v(r) = \det (\id + r L)
\end{equation}
(see \cite[Section 3.4]{Gray}). Hence $v_{n+1} = 0$ in this case. For a general backgrounds, the volume coefficient
$v_{n+1}$ does not vanish, however.    \index{$v_k$ \quad volume coefficients}

The calculation of the remainder term in the expansion of $\SC(S_k)$ requires $k$ normal derivatives of the equation \eqref{Y-F-flat}.
Since the expansion of $S_k$ has a vanishing zeroth-order term, this amounts to take $k-1$ derivatives
of the trace term. In turn, this involves volume coefficients of the metric $h_r$ up to order $k$.  In general, we find
$$
   \B_n \st (r^{-(n+1)}(\SC(\sigma_F) -1))|_0 = (\cdots) - 2v_{n+1}.
$$
In particular, $\B_2$ involves the coefficient $v_3$ and $\B_3$ involves $v_4$.

Now, the above algorithm shows that, for general backgrounds and in general dimensions, the coefficients $\sigma_{(k)}$
are given by the formulas
\begin{align}\label{Y-sol-g}
    \sigma_{(2)} & = \frac{1}{2n}  v_1, \notag \\
    \sigma_{(3)} & =  \frac{2}{3(n-1)} v_2 - \frac{1}{3n} v_1^2 + \frac{1}{3(n-1)} \bar{\J}, \notag \\
    \sigma_{(4)} & = \frac{3}{4(n-2)} v_3 - \frac{9n^2-20n+7}{12n(n-1)(n-2)} v_1 v_2
    + \frac{6n^2-11n+1}{24n^2(n-2)} v_1^3 \notag \\
    & + \frac{2n-1}{6n(n-1)(n-2)} v_1 \bar{\J} + \frac{1}{4(n-2)} \bar{\J}' + \frac{1}{4(n-2)} \Delta (\sigma_{(2)}).
\end{align}
The observation that $\sigma_{(4)}$ has a (formal) pole at $n=2$ reflects the fact that there is no approximate solution
$\sigma_F$ up to order $r^4$ in that dimension. Similarly, $\sigma_{(5)}$ has a pole at $n=3$ - we shall not display an explicit
formula for $\sigma_{(5)}$, however. The obstruction to the existence of a smooth solution in $n=3$ is defined in terms of
$\SC(S_4)$.

In the flat case, the identity \eqref{vol-flat} implies that the volume coefficients $v_k$ are given by the elementary symmetric polynomials
$\sigma_k(L)$ in the eigenvalues of the shape operator defined by $L$. Hence Newton's formulas show that
\begin{align}\label{vol-flat-N}
   v_1 & = n H, \notag \\
   v_2 & = \frac{1}{2} (n H)^2  -\frac{1}{2} |L|^2, \notag \\
   v_3 & = \frac{1}{6} (n H)^3 - \frac{1}{2} n H |L|^2 + \frac{1}{3} \tr(L^3).
\end{align}
A combination of these formulas with \eqref{Y-sol-g} reproduces the expressions in \eqref{sigma-F-flat}, \eqref{sigma4-flat}.


\section{The singular Yamabe obstruction $\B_2$}\label{B2-cl}


In this section, we derive an explicit formula for the obstruction $\B_2$ from its definition in Section
\ref{SYP}. This reproves a result in \cite{ACF}. We also briefly recall the relation to the conformal Willmore
functional $\mathcal{W}_2$.

Let $n=2$. The formula for $\B_2$ in terms of volume coefficients reads
\begin{equation}\label{B2-new-g}
   \B_2 \st (r^{-3}(\SC(S_3) -1))|_0 = - 2 v_3 - \frac{1}{12} v_1^3 + \frac{1}{3} v_1 v_2
  - \frac{2}{3} \Delta (\sigma_{(2)}) - \frac{2}{3} v_1\bar{\J} - \frac{2}{3} \bar{\J}'.
\end{equation}

We recall that the term $v_3$ vanishes in $n=2$ in the flat case but not in the curved case. In the flat case,
this formula reduces to \eqref{B2-new}. The proof easily follows using $v_1 = 2H$, $v_2 = H^2 - |\lo|^2/2$
(see \eqref{vol-flat-N}) and $\sigma_{(2)} = H/2$. In the general case, the formulas for the volume coefficients in
Lemma \ref{v-coeff-3} imply
\begin{align*}
   \B_2 & = \left( \frac{1}{3} \bar{\nabla}_0(\overline{\Ric})_{00} - \frac{1}{6} \overline{\scal}' \right)
   - \frac{1}{3} H \overline{\scal} + H \overline{\Ric}_{00} - \frac{2}{3} (\lo,\bar{\G}) - \frac{1}{3} \Delta (H) - \frac{1}{3} H |\lo|^2
\end{align*}
using $\tr(\lo^3)=0$ in dimension $n=2$. Now the second Bianchi identity implies
$$
    \frac{1}{3} \bar{\nabla}_0(\overline{\Ric})_{00} - \frac{1}{6} \overline{\scal}' =
    - \frac{1}{3} \delta (\overline{\Rho}_0) + \frac{1}{3} (\lo,\bar{\Rho}) - H \overline{\Ric}_{00} + \frac{1}{3} H \overline{\scal}.
$$
(see \cite[(13.6.5)]{JO}). Hence using $(\lo,\bar{\G}) = (\lo,\bar{\Rho})$ we find
\begin{equation}\label{B2-g}
  \B_2 = -\frac{1}{3} ( \Delta (H) +  H |\lo|^2 + \delta (\overline{\Rho}_0) + (\lo,\overline{\Rho}) ).
\end{equation}
By Codazzi-Mainardi $dH = \delta(\lo) - \overline{\Ric}_0$, this formula is equivalent to
$$
   \B_2 = -\frac{1}{3} (\delta \delta (\lo) + H |\lo|^2 + (\lo,\bar{\Rho})).
$$
The latter formula for the obstruction was first derived in \cite[Theorem 1.3]{ACF}.

\begin{rem}\label{residue}
The coefficient $\sigma_{(4)}$ has a simple (formal) pole in $n=2$. Moreover, \eqref{Y-sol-g} implies
$$
   \res_{n=2}(\sigma_{(4)}) = \frac{3}{4} v_3 + \frac{1}{32} v_1^3 - \frac{1}{8} v_1 v_2
   + \frac{1}{4} v_1 \bar{\J} + \frac{1}{4} \bar{\J}' + \frac{1}{4} \Delta (\sigma_{(2)}).
$$
This formula shows the residue formula
\begin{equation*}\label{res-f2}
   \res_{n=2}(\sigma_{(4)}) = - \frac{3}{8} \B_2
\end{equation*}
being a special case of \cite[Lemma 16.3.9]{JO}).
\end{rem}

Let $K$ be the Gauss curvature of a surface $M \hookrightarrow \R^3$. By $2(H^2-K^2) = |\lo|^2$, the equation
$\Delta(H) + H|\lo|^2 = 0$ is equivalent to   
$$
   \Delta(H) + 2 H(H^2-K) = 0
$$
This equation is well-known as the Willmore equation for a surface $M$. It is the Euler-Lagrange equation 
of the Willmore functional
$$
   \mathcal{W}_2 = \int_M |\lo|^2 dvol_h
$$        
for variations of the embedding of $M$ \cite[Section 7.4]{Will}. In other words, 
$\B_2$ provides the Euler-Lagrange equation of $\mathcal{W}_2$. This fact extends to the curved case 
(for details see \cite[Section 13.9]{JO}).   \index{$\mathcal{W}_2$ \quad conformal Willmore functional}

\section{The singular Yamabe obstruction $\B_3$}\label{B3-general}

In this section, we determine explicit formulas for the obstruction $\B_3$. We shall start by expressing the 
definition \eqref{B-def} in terms of volume coefficients of the background metric and two normal derivatives 
of the scalar curvature. We simplify that result by repeated applications of the second Bianchi identity. A sequence 
of further transformations finally leads to Theorem \ref{main1}.  

\subsection{$\B_3$ in terms of volume coefficients}\label{B3-vol}

For $n=3$, the formulas in \eqref{Y-sol-g} read
\begin{align*}
  \sigma_{(2)} = \frac{1}{6} v_1, \quad \sigma_{(3)} = \frac{1}{9} (3 v_2 - v_1^2) + \frac{1}{6} \bar{\J}
\end{align*}
and
\begin{equation*}
  \sigma_{(4)} = \frac{1}{108} (81 v_3 -42 v_1 v_2 + 11 v_1^3) + \frac{5}{36} v_1 \bar{\J}
  + \frac{1}{4} \bar{\J}' + \frac{1}{4} \Delta (\sigma_{(2)}).
\end{equation*}
These quantities define $S_4$. We also recall the expansion $\Delta_{h_r} = \Delta_h + r \Delta_h' + \cdots$. In 
these terms, we obtain                               \index{$\Delta'$}

\begin{lem}\label{B3-volume} It holds
\begin{align}\label{B3-start}
   \B_3 \st (r^{-4}(\SC(S_4) -1))|_0
   & = - 2v_4 + \frac{1}{2} v_1 v_3 + \frac{1}{3} v_2^2 - \frac{7}{18} v_1^2 v_2 + \frac{2}{27} v_1^4 \notag \\
   & - \frac{1}{3} \bar{\J} v_2 - \frac{5}{12} \bar{\J}' v_1 - \frac{1}{4} \bar{\J}'' \notag \\
   & - \frac{1}{2} \Delta (\sigma_{(3)}) - \frac{1}{3} v_1 \Delta(\sigma_{(2)})
   - \frac{1}{2} \Delta' (\sigma_{(2)}) + |d\sigma_{(2)}|^2.
\end{align}
\end{lem}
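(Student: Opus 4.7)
The plan is to compute $\B_3$ directly from the definition \eqref{B-def}, i.e., by extracting the $r^4$--coefficient of the Taylor expansion of $\SC(g,S_4)$. For $n=3$, using \eqref{trace-vol} and $6\bar{\J}=\bar{\scal}$, the defining expression \eqref{Y-F} specializes to
$$
 \SC(g,\sigma) = (\partial_r\sigma)^2 + h_r^{ij}\partial_i\sigma\,\partial_j\sigma - \tfrac{1}{2}\sigma\!\left(\partial_r^2\sigma + \tfrac{v'(r)}{v(r)}\partial_r\sigma + \Delta_{h_r}\sigma\right) - \tfrac{1}{2}\sigma^2\,\bar{\J}(r),
$$
where $\bar{\J}(r)$ denotes the restriction of $\bar{\J}$ to the normal geodesic. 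Substituting $S_4 = r + \sigma_{(2)}r^2 + \sigma_{(3)}r^3 + \sigma_{(4)}r^4$, the coefficients of $r, r^2, r^3$ in $\SC(g,S_4)-1$ vanish by the very construction of the $\sigma_{(k)}$, so $\B_3$ equals the $r^4$--coefficient.

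Next I would Taylor--expand the ingredients: $v'(r)/v(r) = v_1 + (2v_2 - v_1^2)r + (3v_3 - 3v_1v_2 + v_1^3)r^2 + \cdots$ from $v(r) = 1 + v_1 r + v_2 r^2 + \cdots$; the operator $\Delta_{h_r} = \Delta + r\Delta' + \tfrac{r^2}{2}\Delta'' + \cdots$; and $\bar{\J}(r) = \bar{\J} + r\bar{\J}' + \tfrac{r^2}{2}\bar{\J}'' + \cdots$, with all quantities on the right evaluated at $r=0$. Then I collect the $r^4$--coefficient piece by piece. The tangential term $h_r^{ij}\partial_i S_4\,\partial_j S_4$ contributes only $|d\sigma_{(2)}|^2$ at this order, because $\partial_i S_4 = \partial_i\sigma_{(2)}\,r^2 + O(r^3)$, so it is enough to use $h_r^{ij}|_{r=0}=h^{ij}$. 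The remaining four pieces produce polynomial expressions in the $\sigma_{(j)}$, $v_j$, $\bar{\J}^{(j)}$, $\Delta(\sigma_{(2)})$, $\Delta(\sigma_{(3)})$ and $\Delta'(\sigma_{(2)})$.

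The resulting $r^4$--coefficient is affine in $\sigma_{(4)}$; explicit collection (using $\sigma_{(2)}=v_1/6$) shows that the $\sigma_{(4)}$--dependence reduces to a single term proportional to $v_1\sigma_{(4)}$. Substituting the closed form of $\sigma_{(4)}$ for $n=3$ from \eqref{Y-sol-g} then eliminates $\sigma_{(4)}$ in favour of $v_3$, $v_1 v_2$, $v_1^3$, $v_1\bar{\J}$, $\bar{\J}'$ and $\Delta(\sigma_{(2)})$, and assembling the remaining terms produces the right--hand side of \eqref{B3-start}. The main obstacle is the bookkeeping: each product such as $S_4\cdot(v'/v)\cdot\partial_r S_4$ or $S_4\cdot \Delta_{h_r} S_4$ generates many mixed monomials at order $r^4$, and a systematic organization (for example, sorting contributions by the $r$--degree coming from each factor) is essential so that the cancellations yielding \eqref{B3-start} are transparent and nothing is double--counted.
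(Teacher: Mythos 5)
Your plan is exactly the paper's own route: the paper establishes this lemma by ``direct evaluation of the definition'' and omits all details, and your setup --- the $n=3$ specialization of \eqref{Y-F} with $v'/v$ replacing the trace term, expansion of $v'/v$, $\Delta_{h_r}$ and $\bar{\J}$ along the normal geodesic, extraction of the $r^4$-coefficient (whose $\sigma_{(4)}$-dependence is indeed the single term $-v_1\sigma_{(4)}$ once $\sigma_{(2)}=v_1/6$ is used), and elimination of $\sigma_{(4)}$ via \eqref{Y-sol-g} --- does reproduce \eqref{B3-start}; spot-checking the coefficients of $v_4$, $v_1v_3$, $v_2^2$, $\bar{\J}'$ and $\Delta(\sigma_{(2)})$ confirms the bookkeeping. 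Only be aware that you also need the $r^3$-coefficient of $v'/v$ (the sole source of $v_4$) and the closed form of $\sigma_{(3)}$ from \eqref{Y-sol-g} for its undifferentiated occurrences, both of which are implicit in your ``$\cdots$''.
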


This result follows by direct evaluation of the definition of $\B_3$. We omit the details.

In the remaining part of this section, we evaluate this formula.

First of all, we calculate the last line in \eqref{B3-start}.

\begin{lem}\label{last-line} It holds
\begin{align*}
  & - \frac{1}{2} \Delta (\sigma_{(3)}) - \frac{1}{3} v_1 \Delta(\sigma_{(2)}) - \frac{1}{2} \Delta' (\sigma_{(2)})
  + |d\sigma_{(2)}|^2 \\
  & = \frac{1}{12} \Delta (|\lo|^2) + \frac{1}{2} (\lo,\Hess(H)) + \frac{1}{6} \Delta (\bar{\Rho}_{00})
  + \frac{1}{2} (dH,\overline{\Ric}_0) + |dH|^2.
\end{align*}
\end{lem}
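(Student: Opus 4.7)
The proof is a direct substitution followed by algebraic simplification. First I would insert $\sigma_{(2)} = v_1/6$ and $\sigma_{(3)} = (3v_2 - v_1^2)/9 + \bar{\J}/6$ into each of the four summands of the left-hand side. Using $\Delta(v_1^2) = 2 v_1 \Delta v_1 + 2|dv_1|^2$, the $v_1\Delta v_1$ coefficients from $-\tfrac12 \Delta \sigma_{(3)}$ and $-\tfrac13 v_1 \Delta\sigma_{(2)}$ collapse to $\tfrac{1}{18}v_1 \Delta v_1$, and the $|dv_1|^2$ contributions from $\Delta(v_1^2)$ and $|d\sigma_{(2)}|^2$ combine to coefficient $\tfrac{5}{36}$. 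Substituting $v_1 = 3H$, the left-hand side reduces to the intermediate form
\begin{align*}
   -\tfrac16 \Delta v_2 + \tfrac12 H \Delta H + \tfrac54 |dH|^2 - \tfrac{1}{12}\Delta \bar{\J} - \tfrac14 \Delta' H.
\end{align*}

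Next I would bring in two auxiliary formulas. The first is the Fermi expansion of $v_2$ in a general background. From the Jacobi-field expansion $(h_r)_{ij} = h_{ij} + 2r L_{ij} + r^2(L^2_{ij} - \bar{R}_{i0j0}) + O(r^3)$ one computes, for $n=3$,
\[
   v_2 = \tfrac{1}{2}(v_1^2 - |L|^2) - \tfrac{1}{2}\bar{\Ric}_{00} = \tfrac{9}{2} H^2 - \tfrac{1}{2}|L|^2 - \tfrac{1}{2}\bar{\Ric}_{00}.
\]
The second is the variation formula for the Laplacian $\Delta_{h_r}$ along the normal direction. Differentiating $\Delta_{h_r}u = v(r)^{-1}\partial_i(v(r) h_r^{ij}\partial_j u)$ at $r=0$, using $\partial_r h_r|_0 = 2L$, $v'(0) = v_1$, and the contracted Codazzi--Mainardi identity $\nabla^k L_{jk} = 3\,dH_j + \bar{\Ric}_{j0}$, produces
\[
   \Delta'(u) = -2(L,\Hess u) - 3(dH,du) - 2(\bar{\Ric}_0, du),
\]
whose last term disappears for flat backgrounds, reproducing the formula quoted below \eqref{B3-flat}. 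Applying this with $u=H$ and splitting $L = \lo + H h$ gives $\Delta' H = -2(\lo,\Hess H) - 2H\Delta H - 3|dH|^2 - 2(\bar{\Ric}_0, dH)$.

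Plugging both into the intermediate expression and expanding $|L|^2 = |\lo|^2 + 3H^2$, $\Delta(H^2) = 2H\Delta H + 2|dH|^2$, all $H\Delta H$ terms cancel and the $|dH|^2$ coefficients collect to exactly $1$. The $\Delta(|\lo|^2)$ contribution is $\tfrac{1}{12}\Delta|\lo|^2$ and the $(\lo,\Hess H)$ contribution is $\tfrac12(\lo,\Hess H)$, giving the first two terms of the right-hand side. The ambient-Ricci contributions $\tfrac{1}{12}\Delta\bar{\Ric}_{00}$ (from $-\tfrac16 \Delta v_2$) and $-\tfrac{1}{12}\Delta\bar{\J}$ combine via the four-dimensional Schouten identity $2\bar{\Rho}_{00} = \bar{\Ric}_{00} - \bar{\J}$ into the $\tfrac{1}{6}\Delta\bar{\Rho}_{00}$ term. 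Finally, the $-2(\bar{\Ric}_0,dH)$ piece of $\Delta' H$ produces, via the $-\tfrac14\Delta' H$ prefactor, the remaining $\tfrac{1}{2}(dH,\bar{\Ric}_0)$ term.

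The principal difficulty is pure bookkeeping: keeping sign conventions consistent among the paper's definition $L(X,Y) = -h(\nabla_X Y, N)$, the Fermi expansion of $h_r$, the $\Delta'$ derivation, the contracted Bianchi/Codazzi identities, and the Schouten normalization $(n-2)\bar{\Rho} = \bar{\Ric} - \bar{\J}\bar{g}$ in the ambient dimension $n+1 = 4$. A useful sanity check is the flat specialization $\bar{\Ric} = \bar{\J} = 0$ and $v_2 = \tfrac92 H^2 - \tfrac12|L|^2$, in which both sides should reduce to a relation already implicit in the derivation of \eqref{B3-flat}.
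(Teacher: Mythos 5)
Your proposal is correct and follows essentially the same route as the paper: insert the explicit coefficients $\sigma_{(2)}=H/2$ and $\sigma_{(3)}$ (via $v_1,v_2,\bar{\J}$), use the variation formula $\Delta'(u)=-2(L,\Hess(u))-3(dH,du)-2(\overline{\Ric}_0,du)$ obtained from Codazzi--Mainardi, and combine $\overline{\Ric}_{00}-\bar{\J}=2\bar{\Rho}_{00}$; your coefficient bookkeeping ($\tfrac1{18}v_1\Delta v_1$, $\tfrac5{36}|dv_1|^2$, cancellation of $H\Delta H$, total $|dH|^2$ coefficient $1$) checks out. The only quibble is the sign of the curvature term in your quoted expansion of $h_r$ (in the paper's conventions $h_{(2)}=L^2-\bar{\G}$ with $\bar{\G}_{ij}=\bar{R}_{0ij0}$, which is $-\bar{R}_{i0j0}$), but since the formula for $v_2$ you actually use agrees with Lemma \ref{v-coeff-3}, this convention slip does not affect the argument.
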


\begin{proof} We recall that $v_1 = 3 H$. By
\begin{align*}
    \sigma_{(2)} = \frac{1}{2} H \quad \mbox{and} \quad \sigma_{(3)} = \frac{1}{6} (-|\lo|^2 - 2 \bar{\Rho}_{00}),
\end{align*}
we obtain
\begin{align*}
  & - \frac{1}{2} \Delta (\sigma_{(3)}) - \frac{1}{3} v_1 \Delta(\sigma_{(2)})
  - \frac{1}{2} \Delta' (\sigma_{(2)}) + |d\sigma_{(2)}|^2 \\
  & =  \frac{1}{12} \Delta (|\lo|^2) + \frac{1}{6} \Delta (\bar{\Rho}_{00})  - \frac{1}{2} H \Delta H
  - \frac{1}{4} \Delta' (H) + \frac{1}{4} |dH|^2.
\end{align*}
Now the variation formula \cite[Proposition 1.184]{Besse}
\begin{align}\label{vDelta}
   (d/dt)|_0(\Delta_{g+th}(u)) = - (\nabla^g (du),h)_g - (\delta_g(h),du)_g + \frac{1}{2} (d (\tr_g(h)),du)_g
\end{align}
for the Laplacian implies (for $h = 2L$ and $g=h$)
$$
   \Delta' (u) = - 2 (L,\Hess(u)) - 2 (\delta(L), du) + 3 (dH,du).
$$
By Codazzi-Mainardi it holds $\delta(L) = 3dH + 2 \bar{\Rho}_0 = 3 dH + \overline{\Ric}_0$. Hence
$$
   \Delta'(u) = - 2 (L,\Hess(u)) - 3 (dH,du) - 2 (\overline{\Ric}_0,du).
$$
These results imply the assertion.
\end{proof}

\subsection{The volume coefficients}\label{vol-c}


The volume coefficients $v_j$ can be expressed in terms of the Taylor coefficients of $h_r$. These relations
follow by Taylor expansion of the identity \eqref{trace-vol} in the variable $r$ and solving the resulting relations for $v_j$.
We find
\begin{align*}
    2 v_1 & = \tr (h_{(1)}), \\
    8 v_2 & = \tr (h_{(1)})^2 + 4 \tr (h_{(2)}) - 2 \tr (h_{(1)}^2), \\
    48 v_3 & = \tr (h_{(1)})^3 + 12 \tr (h_{(1)}) \tr (h_{(2)}) + 24 \tr (h_{(3)})
    - 6 \tr(h_{(1)}) \tr (h_{(1)}^2) \\
    & - 24 \tr(h_{(1)} h_{(2)}) + 8 \tr (h_{(1)}^3)
\end{align*}
and
\begin{align*}
   384 v_4 & = \tr (h_{(1)})^4 + 24 \tr (h_{(1)})^2 \tr (h_{(2)}) + 48 \tr(h_{(2)})^2
  + 96 \tr(h_{(1)}) \tr (h_{(3)}) + 192 \tr (h_{(4)}) \\
   & - 12 \tr(h_{(1)})^2 \tr (h_{(1)}^2) - 48 \tr(h_{(2)}) \tr(h_{(1)}^2) + 12 \tr(h_{(1)}^2)^2
   - 96 \tr(h_{(1)}) \tr (h_{(1)} h_{(2)}) \\
   & - 192 \tr(h_{(1)} h_{(3)}) - 96 \tr(h_{(2)}^2) +32 \tr(h_{(1)}) \tr (h_{(1)}^3) + 192 \tr(h_{(1)}^2 h_{(2)})
  - 48 \tr(h_{(1)}^4.
\end{align*}
These formulas are valid in general dimension. In order to evaluate them, we apply the following results for
the coefficients $h_{(k)}$ for $k\le 3$  \cite{GG}, \cite[Proposition 13.2.1]{JO}.

\begin{lem}\label{h-coeff} In general dimensions, it holds
$$
    h_{(1)} = 2 L, \quad h_{(2)} = L^2 - \bar{\G} \quad \mbox{and} \quad
   3 (h_{(3)})_{ij}  =  -\bar{\nabla}_0 (\bar{R})_{0ij0} - 2 L_i^k \bar{\G}_{jk} - 2 L_j^k \bar{\G}_{ik},
$$
where $\bar{\G}_{ij} \st \bar{R}_{0ij0}$.    \index{$\bar{\G}$}
\end{lem}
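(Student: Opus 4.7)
The plan is to work in Gauss normal coordinates $(r,x^i)$ based at $M$, so that $g=dr^2+h_r$ in a one-sided tubular neighbourhood and $h_{(k)}=\tfrac{1}{k!}\partial_r^k h_r|_{r=0}$. In these coordinates the ambient Christoffel symbols reduce to
\[
   \bar\Gamma^r_{ij}=-\tfrac12\partial_r h_{r,ij},\qquad \bar\Gamma^k_{ri}=\tfrac12 h_r^{kl}\partial_r h_{r,il},\qquad \bar\Gamma^a_{rr}=\bar\Gamma^r_{ri}=0,
\]
so the definition $L_{ij}=-g(\nabla^g_{\partial_i}\partial_j,N)$ with $N=\partial_r$ immediately gives $L_{ij}=\tfrac12\partial_r h_{r,ij}|_0$, i.e.\ $h_{(1)}=2L$.

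For the higher coefficients I would derive the Riccati equation along the normal geodesic flow. Setting $K(r)_j{}^l\st\tfrac12 h_r^{lk}\partial_r h_{r,jk}$, a direct computation of $\bar R(\partial_0,\partial_j)\partial_0$ from the Christoffels above yields $\bar R_{0j0}{}^l=\partial_r K_j{}^l+(K^2)_j{}^l$. The symmetries of $\bar R$ together with the convention $\bar\G_{ij}=\bar R_{0ij0}$ give $\bar R_{0j0}{}^l=-\bar\G_j{}^l$, hence the matrix identity
\[
   \partial_r K+K^2 = -\bar\G_r.
\]
Differentiating $H'=2HK$ (with $H=h_r$) once and evaluating at $r=0$ — being careful that $(h_r^{-1})'|_0=-2L$ — combines with $K'(0)=-L^2-\bar\G$ to give $\tfrac12 H''(0)=L^2-\bar\G$, i.e.\ $h_{(2)}=L^2-\bar\G$.

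For $h_{(3)}$ I would differentiate Riccati once more to obtain $K''(0)=2L^3+L\bar\G+\bar\G L-\partial_r\bar\G|_0$, and substitute into $H'''=8HK^3+8HKK'+4HK'K+2HK''$. The cubic contributions in $L$ cancel identically, leaving an expression that is linear in $L\bar\G$, $\bar\G L$ and $\partial_r\bar\G|_0$. The last step is to re-express $\partial_r\bar\G_{ij}|_0$ in terms of the covariant derivative $\bar\nabla_0\bar R_{0ij0}$: since $\bar\Gamma^a_{00}=0$ and $\bar\Gamma^a_{0i}|_0=L_i{}^a$, the Christoffel corrections produce precisely the two terms $L_i{}^a\bar\G_{aj}+L_j{}^a\bar\G_{ai}$, which combine with the coefficients from the previous step to yield the asserted formula $3h_{(3),ij}=-\bar\nabla_0\bar R_{0ij0}-2L_i{}^k\bar\G_{jk}-2L_j{}^k\bar\G_{ik}$.

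The main obstacle is bookkeeping: tracking signs under the paper's curvature conventions (the minus sign relating $\bar R_{0j0}{}^l$ to $\bar\G_j{}^l$ is easy to lose), and distinguishing indices raised by $h$ from indices raised by $h_r$, which matters when differentiating the $(1,1)$-tensor $\bar\G_j{}^l$ in $r$. A useful consistency check is that the cubic-in-$L$ contributions in the matrix expansion of $H'''(0)$ must cancel algebraically, since the right-hand side of the lemma is linear in the ambient curvature.
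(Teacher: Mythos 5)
The paper does not actually prove Lemma \ref{h-coeff}; it quotes the formulas from [GG] and [JO, Proposition 13.2.1]. Your outline is nevertheless a correct, self-contained derivation, and it is essentially the same mechanism the paper uses nearby: your Riccati equation $\partial_r K+K^2=-\bar{\G}_r$ is just the index-raised form of the radial identity \eqref{R-formula}, $\bar{R}_{0jk0}=\tfrac14 g^{ab}g_{aj}'g_{bk}'-\tfrac12 g_{jk}''$, which the paper differentiates in the proof of Lemma \ref{trace-h4}, and your Christoffel conversion (using $\bar{\Gamma}^k_{0i}|_0=L_i{}^k$, $\bar{\Gamma}^a_{00}=0$) is exactly the relation $\partial_r(\bar{R}_{0ij0})|_0=\bar{\nabla}_0(\bar{R})_{0ij0}+L_i{}^k\bar{\G}_{kj}+L_j{}^k\bar{\G}_{ki}$ used there. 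I checked the coefficient bookkeeping you leave implicit: in index form the cubic terms in $L$ cancel ($8-12+4=0$), the remaining terms give $\tfrac12 h'''(0)_{ij}=-\partial_r(\bar{R}_{0ij0})|_0-L_i{}^k\bar{\G}_{kj}-L_j{}^k\bar{\G}_{ki}$, and after the covariant-derivative substitution this is precisely $3(h_{(3)})_{ij}=-\bar{\nabla}_0(\bar{R})_{0ij0}-2L_i{}^k\bar{\G}_{jk}-2L_j{}^k\bar{\G}_{ik}$; likewise $h_{(1)}=2L$ and $h_{(2)}=L^2-\bar{\G}$ come out with the paper's sign conventions. The only point needing care, which you already flag, is the ordering in the matrix shorthand ($H'=2HK$ versus $2KH$) and the fact that $\partial_r\bar{\G}$ must be taken as the derivative of the $(1,1)$-tensor with indices raised by $h_r$ (producing the extra $-2L^{mk}\bar{\G}_{jk}$ term); handled in index notation, the coefficients $(8,8,4,2)$ and the final formula come out as claimed.
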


As consequences, we find explicit formulas for the volume coefficients $v_k$ for $k \le 3$.

\begin{lem}\label{v-coeff-3} In general dimensions, it holds
\begin{align*}
     v_1 & = n H \\
     2 v_2 & = - \overline{\Ric}_{00} - |\lo|^2 + n(n\!-\!1) H^2 = \overline{\Ric}_{00} + \scal - \overline{\scal} \\
     6 v_3 & = - \bar{\nabla}_0(\overline{\Ric})_{00} + 2 (\lo,\bar{\G}) - (3n\!-\!2) H \overline{\Ric}_{00}
    + 2 \tr(\lo^3) - 3 (n\!-\!2) H |\lo|^2 + n(n\!-\!1)(n\!-\!2) H^3.
\end{align*}
\end{lem}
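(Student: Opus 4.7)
The formulas for $v_k$ with $k=1,2,3$ in terms of traces of the Taylor coefficients $h_{(j)}$ are already displayed at the start of Subsection \ref{vol-c}, and Lemma \ref{h-coeff} provides explicit expressions for $h_{(1)}, h_{(2)}, h_{(3)}$. The whole proof is therefore a direct substitution followed by bookkeeping, converting everything from $L$-traces to $\lo$-traces via $L = \lo + Hh$.

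For $v_1$ we simply read off $2v_1 = \tr(h_{(1)}) = 2\tr(L) = 2nH$. For $v_2$, I would substitute $h_{(1)} = 2L$ and $h_{(2)} = L^2 - \bar{\G}$ into
$8v_2 = \tr(h_{(1)})^2 + 4\tr(h_{(2)}) - 2\tr(h_{(1)}^2)$,
which produces
$8v_2 = 4n^2H^2 + 4\tr(L^2) - 4\overline{\Ric}_{00} - 8\tr(L^2)$,
since $\tr(\bar{\G}) = h^{ij}\bar{R}_{0ij0} = \overline{\Ric}_{00}$. Using $\tr(L^2) = |\lo|^2 + nH^2$ gives the first stated form of $2v_2$. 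The second form follows by invoking the Gauss contraction $\overline{\scal} = \scal - 2\overline{\Ric}_{00} + (nH)^2 - |L|^2$, which rearranges into the claimed identity.

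For $v_3$, the plan is analogous but more laborious. First compute the trace $\tr(h_{(3)})$ from Lemma \ref{h-coeff}: contracting the two free indices of
$3(h_{(3)})_{ij} = -\bar{\nabla}_0(\bar{R})_{0ij0} - 2L_i^k\bar{\G}_{jk} - 2L_j^k\bar{\G}_{ik}$
yields
$3\tr(h_{(3)}) = -\bar{\nabla}_0(\overline{\Ric})_{00} - 4(L,\bar{\G})$,
and $(L,\bar{\G}) = (\lo,\bar{\G}) + H\overline{\Ric}_{00}$ splits off the trace-free part. Then substitute this together with $\tr(h_{(1)}) = 2nH$, $\tr(h_{(1)}^2) = 4\tr(L^2)$, $\tr(h_{(1)}h_{(2)}) = 2\tr(L^3) - 2(L,\bar{\G})$, and $\tr(h_{(1)}^3) = 8\tr(L^3)$ into
$48v_3 = \tr(h_{(1)})^3 + 12\tr(h_{(1)})\tr(h_{(2)}) + 24\tr(h_{(3)}) - 6\tr(h_{(1)})\tr(h_{(1)}^2) - 24\tr(h_{(1)}h_{(2)}) + 8\tr(h_{(1)}^3).$

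Finally, I would collect terms after expanding $\tr(L^k)$ via $L = \lo + Hh$: the identities $\tr(L^2) = |\lo|^2 + nH^2$ and $\tr(L^3) = \tr(\lo^3) + 3H|\lo|^2 + nH^3$ (using $\tr(\lo)=0$) reduce everything to the tensors appearing in the statement. The principal source of friction is pure arithmetic, tracking the rational coefficients of $H^3$, $H|\lo|^2$, $\tr(\lo^3)$, $H\overline{\Ric}_{00}$, $(\lo,\bar{\G})$, and $\bar{\nabla}_0(\overline{\Ric})_{00}$; I expect the two conceptual ingredients to be the identity $\tr(\bar{\G}) = \overline{\Ric}_{00}$ and the decomposition $(L,\bar{\G}) = (\lo,\bar{\G}) + H\overline{\Ric}_{00}$, after which all other manipulations are routine. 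No deeper input (such as Codazzi--Mainardi, Gauss, or the second Bianchi identity beyond what is already used in the reformulation of $v_2$) should be required at this stage.
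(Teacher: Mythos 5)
Your route is exactly the paper's (implicit) one: the lemma is obtained by substituting Lemma \ref{h-coeff} into the trace formulas for $v_1,v_2,v_3$ displayed at the start of Subsection \ref{vol-c}, and your bookkeeping checks out. In particular $\tr(\bar{\G})=\overline{\Ric}_{00}$, $3\tr(h_{(3)}) = -\bar{\nabla}_0(\overline{\Ric})_{00} - 4(L,\bar{\G})$, $\tr(h_{(1)}h_{(2)}) = 2\tr(L^3) - 2(L,\bar{\G})$, $\tr(h_{(1)}^3)=8\tr(L^3)$ are correct, and the conversions $|L|^2=|\lo|^2+nH^2$, $\tr(L^3)=\tr(\lo^3)+3H|\lo|^2+nH^3$, $(L,\bar{\G})=(\lo,\bar{\G})+H\overline{\Ric}_{00}$ indeed reproduce the stated coefficients, e.g. $6v_3 = n^3H^3-3nH|L|^2-3nH\overline{\Ric}_{00}-\bar{\nabla}_0(\overline{\Ric})_{00}+2(L,\bar{\G})+2\tr(L^3)$ before the trace-free split.

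One slip: the contracted Gauss identity you invoke has the barred and unbarred scalar curvatures interchanged. The correct version, which the paper records immediately after the lemma, is $\overline{\scal}-\scal = 2\overline{\Ric}_{00}+|L|^2-n^2H^2$, equivalently $\scal = \overline{\scal}-2\overline{\Ric}_{00}+n^2H^2-|L|^2$. With your identity as written, $\overline{\Ric}_{00}+\scal-\overline{\scal}$ would evaluate to $3\overline{\Ric}_{00}+|L|^2-n^2H^2$, which is not $2v_2=-\overline{\Ric}_{00}-|L|^2+n^2H^2$; with the sign corrected, the second expression for $2v_2$ follows exactly as you intend, and the rest of the argument stands.
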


These formulas coincide with the corresponding terms in the expansion of the volume form in \cite[Theorem 3.4]{AGV}.
Note that this is obvious for $v_1$ and $v_2$ but requires to apply the Gauss identities
\begin{align*}
   \overline{\scal}-\scal  & = 2 \overline{\Ric}_{00} + |L|^2- n^2 H^2, \\
   \overline{\Ric} - \Ric & = \bar{\G} - n H L - L^2
\end{align*}
for $v_3$. Equivalent formulas can be found in \cite[Section 2]{GG}.

The coefficient $v_4$ is more involved. It depends on $h_{(k)}$ for $k\le 3$ and $\tr(h_{(4)})$. We shall not
discuss an explicit formula for $h_{(4)}$. For our purpose, it will be enough to prove the following formula for
the quantity $\tr(h_{(4)})$.

\begin{lem}\label{trace-h4} In general dimensions, it holds
\begin{equation}\label{trace-4}
    12 \tr (h_{(4)}) = - \bar{\nabla}_0^2 (\overline{\Ric})_{00} - 6 L^{ij} \bar{\nabla}_0 (\bar{R})_{0ij0}
   - 4 (L^2,\bar{\G}) + 4 (\bar{\G},\bar{\G}).
\end{equation}
\end{lem}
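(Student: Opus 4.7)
My plan is to derive the formula from the Riccati (Jacobi) equation satisfied by $h_r$ in geodesic normal coordinates based at $M$. Writing $g = dr^2 + h_r$ and using $\bar{\nabla}_{\partial_r}\partial_r = 0$ together with $[\partial_r,\partial_i] = 0$, the classical computation yields
$$
h_r'' = -2\,\bar{\G}(r) + \tfrac{1}{2}\, h_r'\, h_r^{-1}\, h_r',
$$
where $\bar{\G}(r)_{ij} := \bar R_{0ij0}$ is the ambient curvature on coordinate fields at level $r$. Evaluated at $r=0$ this recovers $h_{(2)} = L^2 - \bar{\G}$, and one $r$-derivative reproduces the formula for $h_{(3)}$ from Lemma \ref{h-coeff}; both serve as consistency checks. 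Since only $\tr h_{(4)}$ is needed, I never have to extract $h_{(4)}$ itself.

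Differentiating the Riccati equation twice in $r$, evaluating at $r=0$, and using $h_r^{(k)}(0) = k!\, h_{(k)}$, I obtain
$$
24\, h_{(4)} \;=\; -2\, \bar{\G}''(0) \;+\; \tfrac{1}{2}\bigl[h_r'\, h_r^{-1}\, h_r'\bigr]''\big|_{r=0}.
$$
The right-hand product is expanded by the Leibniz rule using $h_r'(0) = 2L$, $h_r''(0) = 2(L^2-\bar{\G})$, $h_r'''(0) = 6\, h_{(3)}$, together with $(h_r^{-1})'(0) = -2 h^{-1} L h^{-1}$ and $(h_r^{-1})''(0) = 8 h^{-1} L h^{-1} L h^{-1} - 2 h^{-1}(L^2-\bar{\G}) h^{-1}$, the latter coming from $(h_r^{-1})' = -h_r^{-1} h_r' h_r^{-1}$. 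For the curvature term I would convert the coordinate derivative into a covariant one: from $\bar{\nabla}_{\partial_r}\partial_i = \bar{\nabla}_{\partial_i}\partial_r$ and the definition of $L$,
$$
\partial_r \bar R_{0ij0}\big|_{r=0} = \bar{\nabla}_0 \bar R_{0ij0} + L_i{}^k \bar{\G}_{kj} + L_j{}^k \bar{\G}_{ik},
$$
and I iterate this identity to express $\partial_r^2 \bar R_{0ij0}|_{r=0}$ in terms of $\bar{\nabla}_0^2 \bar R_{0ij0}$ plus terms quadratic in $L, \bar{\G}$ and terms linear in $L$ coupled to $\bar{\nabla}_0 \bar R$.

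Taking the trace with $h^{-1}$ and collecting, I expect the pure-$L$ cubic and quartic traces from the Leibniz side to cancel against those coming from substituting $h_{(3)}$, while the mixed $L\bar{\G}$-correction terms from $\partial_r^2 \bar R_{0ij0}|_0$ combine with the traces from the Leibniz expansion. What should survive are exactly the four terms of the claim: $-\bar{\nabla}_0^2 \overline{\Ric}_{00}$ (from tracing $-2\bar{\G}''(0)$), the coupling $-6 L^{ij}\bar{\nabla}_0 \bar R_{0ij0}$ (to which both $h_{(3)}$ and the $\partial_r$-correction of $\bar{\G}'$ contribute), and the contractions $(L^2,\bar{\G})$ and $(\bar{\G},\bar{\G})$ with the stated coefficients.

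The principal obstacle is bookkeeping: many terms appear at each step, and the cancellations depend on fully exploiting the symmetries of $L$, $\bar{\G}$, and $\bar R$. A useful sanity check is the flat background, where $\bar{\G}\equiv 0$ reduces the Riccati equation to $h_r'' = \tfrac{1}{2} h_r' h_r^{-1} h_r'$ with solution $h_r = h + 2rL + r^2 L^2$ quadratic in $r$, so $\tr h_{(4)} = 0$ and both sides of the claim vanish; the full identity should be read as the curved deformation of this trivial model.
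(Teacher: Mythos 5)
Your ``Riccati equation'' is exactly the paper's identity \eqref{R-formula} solved for $h_r''$, and your plan --- differentiate twice in $r$ at $r=0$, trace with $h^{-1}$ using the expansion of $h_r^{-1}$, and convert $\partial_r$-derivatives of $\bar{R}_{0ij0}$ into $\bar{\nabla}_0$-derivatives via the $L\bar{\G}+\bar{\G}L$ correction --- is precisely the computation carried out in the paper's proof of Lemma \ref{trace-h4}, and your lower-order and flat-background consistency checks are valid. So the approach is correct and essentially the same; only the final coefficient bookkeeping, which you defer, remains to be executed as in the paper.
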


\begin{proof} A calculation of Christoffel symbols shows that
\begin{equation}\label{R-formula}
   \bar{R}_{0jk0} = \frac{1}{4} g^{ab} g_{aj}' g_{bk}' - \frac{1}{2} g_{jk}''
\end{equation}
\cite[(13.2.5)]{JO}. The assertion then follows by evaluating the second-order derivative in $r$ of this equation followed
by contraction with $h^{jk}$. Here are the details. Differentiating \eqref{R-formula} twice in $r$ at $r=0$
yields
\begin{align*}
   \partial_r^2 (\bar{R}_{0jk0}) & =\frac{1}{4} (g^{ab})'' g_{aj}' g_{bk}'
   + \frac{1}{4} g^{ab} (g_{aj})''' g_{bk}'  + \frac{1}{4} g^{ab} g_{aj}' (g_{bk})''' \\
   & + \frac{1}{2} (g^{ab})' (g_{aj})'' g_{bk}'
   + \frac{1}{2} (g^{ab})' (g_{aj})' (g_{bk})'' + \frac{1}{2} g^{ab} (g_{aj})'' (g_{bk})'' - \frac{1}{2} g_{jk}'''' \\
   & = 2 (3 L^2 + \bar{\G})^{ab} L_{aj} L_{bk} + 3 h^{ab} (h_{(3)})_{aj} L_{bk}
   + 3 h^{ab} L_{aj} (h_{(3)})_{bk} \\
   & - 4 L^{ab} (L^2-\bar{\G})_{aj} L_{bk} - 4 L^{ab} L_{aj} (L^2-\bar{\G})_{bk}
   + 2 h^{ab} (L^2- \bar{\G})_{aj} (L^2-\bar{\G})_{bk} - 12 (h_{(4)})_{jk} \\
   & =  2 (3 L^2 + \bar{\G})^{ab} L_{aj} L_{bk}
   + h^{ab} (-\bar{\nabla}_0(\bar{R})_{0aj0} - 2 (L \bar{\G})_{aj} - 2 (\bar{\G} L)_{aj}) L_{bk} \\
   & + h^{ab} L_{aj} (-\bar{\nabla}_0(\bar{R})_{0bk0} - 2 (L \bar{\G})_{bk} - 2 (\bar{\G} L)_{bk}) \\
   & - 4 L^{ab} (L^2-\bar{\G})_{aj} L_{bk} - 4 L^{ab} L_{aj} (L^2-\bar{\G})_{bk}
   + 2 h^{ab} (L^2- \bar{\G})_{aj} (L^2-\bar{\G})_{bk} - 12 (h_{(4)})_{jk}
\end{align*}
using $(h_r^{-1})_{ij} = h^{ij} - 2 L^{ij} r + (3(L^2)^{ij} + \bar{\G}^{ij}) r^2 + \cdots$. Hence
$$
   h^{jk} \partial_r^2(\bar{R}_{0jk0})
$$
equals
\begin{equation}\label{exp-1}
   - 2 L^{ij} \bar{\nabla}_0(\bar{R})_{0ij0} -2 (L^2,\bar{\G}) + 2 (\bar{\G},\bar{\G}) - 12 \tr (h_{(4)}).
\end{equation}
On the other hand, we calculate
\begin{align*}
   h^{jk} \partial_r^2  (\bar{R}_{0jk0}) & = \partial_r^2 ((h_r^{-1})_{jk} \bar{R}_{0jk0})
   - 2 (h_r^{-1})'_{jk} \partial_r (\bar{R}_{0jk0}) - (h_r^{-1})''_{jk} \bar{R}_{0jk0} \\
   & = \partial_r^2 (\overline{\Ric}_{00}) + 4 L^{jk} \partial_r (\bar{R}_{0jk0}) - 2(3 L^2 + \bar{\G},\G).
\end{align*}
Therefore, the relations
\begin{align*}
    \partial_r^2 (\overline{\Ric}_{00}) & = \bar{\nabla}_0^2(\overline{\Ric})_{00}, \\
    \partial_r(\bar{R}_{0jk0}) & = \bar{\nabla}_0(\bar{R})_{0jk0} + (L \bar{\G} + \bar{\G} L)_{jk}
\end{align*}
imply
\begin{equation}\label{exp-2}
  h^{jk} \partial_r^2  (\bar{R}_{0jk0}) = \bar{\nabla}_0^2(\overline{\Ric})_{00}
  + 4 L^{jk}  \bar{\nabla}_0(\bar{R})_{0jk0} + 8 (L^2,\bar{\G}) - 6 (L^2,\bar{\G}) - 2(\bar{\G},\bar{\G}).
\end{equation}
Now combining \eqref{exp-1} and \eqref{exp-2} proves the assertion.
\end{proof}

\begin{example}\label{PE-trace-test} Let $n \ge 3$ be general. Assume that $g = r^2 g_+$ for a
Poincar\'e-Einstein metric $g_+ = r^{-2} (dr^2 + (h - \Rho r^2  + \Rho^2 r^4/4))$
with conformally flat conformal infinity $h$. $\Rho$ is the Schouten tensor of $h$. $r$ is the distance in the metric
$g$ from the hypersurface $r=0$. The formula for $g$ shows that $\tr(h_{(4)}) = 1/4 |\Rho|^2$.
Comparing the coefficients of $r$ and $r^2$ in the expansions of $h_r$, shows that $L=0$ and $\bar{\G} = \Rho$.
Hence the above formula reduces to
$$
    12 \tr (h_{(4)}) = - \bar{\nabla}_0^2 (\overline{\Ric})_{00}  + 4(\Rho,\Rho)
    = - \partial_r^2 (\overline{\Ric}_{00}) + 4 (\Rho,\Rho).
$$
But \cite[Lemma 6.11.2]{J1} shows that $\bar{\Rho} = -1/(2r) \partial_r ({h}_r)$. Hence $\bar{\Rho}
= \Rho - 1/2 r^2 \Rho^2$. It follows that $\partial_r^2 (\bar{\Rho}) = - \Rho^2$. Therefore,
$\partial_r^2(\bar{\Rho}_{00}) = 0$ and we conclude that
$\partial_r^2(\overline{\Ric}_{00}) = \partial_r^2 (\bar{\J}) \stackrel{!}{=} |\Rho|^2$ (for $r=0$) using
\cite[Lemma 6.11.1]{J1}. Hence the right-hand side gives $3 |\Rho|^2$, i.e., we reproduced
the result $4 \tr(h_{4)}) = |\Rho|^2$. For general $h$, the Poincare\'e-Einstein metric also involves the Bach tensor.
But since the Bach tensor is trace-free, we still have $\tr(h_{(4)}) = 1/4 |\Rho|^2$ and we get the same conclusion.
\end{example}

The above results imply the following formula for $v_4$.

\begin{lem}\label{v4-form} It holds
\begin{align}\label{v4-L}
    24 v_4 & =  - \bar{\nabla}_0^2(\overline{\Ric})_{00}
    + 2 L^{ij} \bar{\nabla}_0 (\bar{R})_{0ij0} - 4 n H \bar{\nabla}_0 (\overline{\Ric})_{00} \notag \\
    & + 3 (\overline{\Ric}_{00})^2 - 2 (\bar{\G},\bar{\G}) + 8 n H (L,\bar{\G}) - 8 (L^2,\bar{\G})
    + 6 |\lo|^2 \overline{\Ric}_{00} - 6 n(n\!-\!1) H^2 \overline{\Ric}_{00}  \notag \\
    & + 24 \sigma_4(L)
\end{align}
or, equivalently,
\begin{align*}
    24 v_4 & =  -\bar{\nabla}_0^2(\overline{\Ric})_{00} + 2 \lo^{ij} \bar{\nabla}_0 (\bar{R})_{0ij0}
   - (4n\!-\!2) H \bar{\nabla}_0 (\overline{\Ric})_{00} \notag \\
   & + 3 (\overline{\Ric}_{00})^2 - 2 (\bar{\G},\bar{\G}) + 8(n\!-\!2) H (\lo,\bar{\G}) - 8 (\lo^2,\bar{\G}) \notag \\
   & - 2(n\!-\!1)(3n\!-\!4) H^2 \overline{\Ric}_{00} + 6 |\lo|^2 \overline{\Ric}_{00} \notag \\
   & + 24 \sigma_4(L).
\end{align*}
In particular, for $n=3$, we find
\begin{align}\label{v4-L-3}
    24 v_4 & =  - \bar{\nabla}_0^2(\overline{\Ric})_{00} + 2 \lo^{ij} \bar{\nabla}_0 (\bar{R})_{0ij0}
   - 10 H \bar{\nabla}_0 (\overline{\Ric})_{00} \notag \\
   & + 3 (\overline{\Ric}_{00})^2  - 2 (\bar{\G},\bar{\G}) + 8 H (\lo,\bar{\G})
   - 8 (\lo^2,\bar{\G}) - 20  H^2 \overline{\Ric}_{00} + 6 |\lo|^2 \overline{\Ric}_{00}
\end{align}
using $\sigma_4(L)=0$.
\end{lem}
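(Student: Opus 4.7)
\medskip

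\noindent\textbf{Proof plan for Lemma \ref{v4-form}.} The plan is to substitute the explicit expressions for the low-order Taylor coefficients $h_{(1)}, h_{(2)}, h_{(3)}$ (Lemma~\ref{h-coeff}) and for $\tr(h_{(4)})$ (Lemma~\ref{trace-h4}) into the combinatorial formula
\begin{align*}
   384 v_4 & = \tr (h_{(1)})^4 + 24 \tr (h_{(1)})^2 \tr (h_{(2)}) + 48 \tr(h_{(2)})^2
  + 96 \tr(h_{(1)}) \tr (h_{(3)}) + 192 \tr (h_{(4)}) \\
   & - 12 \tr(h_{(1)})^2 \tr (h_{(1)}^2) - 48 \tr(h_{(2)}) \tr(h_{(1)}^2) + 12 \tr(h_{(1)}^2)^2
   - 96 \tr(h_{(1)}) \tr (h_{(1)} h_{(2)}) \\
   & - 192 \tr(h_{(1)} h_{(3)}) - 96 \tr(h_{(2)}^2) + 32 \tr(h_{(1)}) \tr (h_{(1)}^3) + 192 \tr(h_{(1)}^2 h_{(2)})
  - 48 \tr(h_{(1)}^4)
\end{align*}
derived earlier from the Taylor expansion of \eqref{trace-vol}, and then reorganise the result by curvature type.

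\medskip

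\noindent First I would make the substitutions $h_{(1)} = 2L$ and $h_{(2)} = L^2 - \bar{\G}$ everywhere; this expands each bracket into a sum of purely $L$-polynomial terms, terms linear in $\bar{\G}$ (either as $\tr(\bar{\G}) = \overline{\Ric}_{00}$, as $(L,\bar{\G})$, or as $(L^2,\bar{\G})$), and the quadratic pieces $(\overline{\Ric}_{00})^2$ and $(\bar{\G},\bar{\G})$ coming from the $\tr(h_{(2)})^2$ and $\tr(h_{(2)}^2)$ monomials. Next I would plug in the formula for $h_{(3)}$, which feeds the derivative term $\bar{\nabla}_0(\bar{R})_{0ij0}$ and additional $(L,\bar{\G})$, $(L^2,\bar{\G})$ contributions (through $96\tr(h_{(1)})\tr(h_{(3)})$ and $-192\tr(h_{(1)} h_{(3)})$), using $\tr(h_{(3)}) = -\tfrac{1}{3}\bar{\nabla}_0(\overline{\Ric})_{00} - \tfrac{4}{3}(L,\bar{\G})$. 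Finally the $192 \tr(h_{(4)})$ term is rewritten via Lemma~\ref{trace-h4}, supplying $\bar{\nabla}_0^2(\overline{\Ric})_{00}$ together with further $L^{ij}\bar{\nabla}_0(\bar{R})_{0ij0}$, $(L^2,\bar{\G})$ and $(\bar{\G},\bar{\G})$ pieces.

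\medskip

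\noindent With every term now expressed in one of the categories appearing on the right-hand side of \eqref{v4-L}, I would collect coefficients category by category: the two second-derivative terms and the mixed derivative terms, then the quadratic curvature pair $(\overline{\Ric}_{00})^2$ and $(\bar{\G},\bar{\G})$, then the three $(L,\bar{\G})$-type terms $nH(L,\bar{\G})$ and $(L^2,\bar{\G})$ and the $|\lo|^2\overline{\Ric}_{00}$ combinations obtained from $\tr(L^2) = n^2 H^2 - (n-1)|\lo|^2 \cdots$ (after rewriting $|L|^2 = n H^2 + |\lo|^2$), and finally the terms depending only on powers of $L$. The purely extrinsic combination
$$
  \tr(L)^4 - 6 \tr(L)^2 \tr(L^2) + 3 \tr(L^2)^2 + 8 \tr(L)\tr(L^3) - 6 \tr(L^4)
$$
left over should then equal $24 \sigma_4(L)$ by Newton's identities, producing the last line of \eqref{v4-L}. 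The second stated form of the lemma is obtained simply by splitting $L = \lo + H h$ in the terms $(L,\bar{\G})$ and $(L^2,\bar{\G})$ and using $nH \overline{\Ric}_{00} = nH\tr(\bar{\G})$ to absorb the arising traces.

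\medskip

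\noindent The main obstacle is purely bookkeeping: keeping the many numerical coefficients straight through the expansion, and recognising the precise Newton combination of power sums $p_k = \tr(L^k)$ that collapses to $24\sigma_4(L)$. A secondary subtlety is that several terms of the form $\tr(L)^a \tr(L^b) \tr(\bar{\G})^c$ occur with overlapping structure and must be regrouped using $\tr(\bar{\G}) = \overline{\Ric}_{00}$ before cancellation becomes visible; the special case $n=3$ then follows at once because a $3\times 3$ shape operator has $\sigma_4 = 0$, and this yields \eqref{v4-L-3} directly.
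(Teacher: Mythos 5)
Your proposal is correct and is exactly the calculation the paper intends: the proof of Lemma \ref{v4-form} is stated there as "a direct calculation" whose details are omitted, namely substituting $h_{(1)}=2L$, $h_{(2)}=L^2-\bar{\G}$, the formula for $h_{(3)}$ (giving $\tr(h_{(3)})=-\tfrac13\bar{\nabla}_0(\overline{\Ric})_{00}-\tfrac43(L,\bar{\G})$) and Lemma \ref{trace-h4} into the combinatorial expression for $384\,v_4$, collecting terms by curvature type, and identifying the purely extrinsic remainder with $24\sigma_4(L)$ via Newton's identity, with the second form following from $L=\lo+Hh$ and the case $n=3$ from $\sigma_4(L)=0$. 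Only note the small slip in your bookkeeping remark: the correct identities are $\tr(L)^2=n^2H^2$ and $\tr(L^2)=|L|^2=nH^2+|\lo|^2$ (the latter you do state correctly in parentheses), and the splitting $L=\lo+Hh$ must also be applied to the term $2L^{ij}\bar{\nabla}_0(\bar{R})_{0ij0}$ to produce the coefficient $-(4n\!-\!2)H\bar{\nabla}_0(\overline{\Ric})_{00}$.
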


\begin{proof}
This is a direct calculation. We omit the details.
\end{proof}

The three terms in the first line of \eqref{v4-L} coincide with the corresponding terms in the formula for
$v_4$ in \cite[Theorem 3.4]{AGV}. However, the remaining terms in both formulas are expressed in different
ways.

Note that Newton's identity
$$
   24 \sigma_4 (L) = \tr(L)^4 - 6 \tr(L)^2 |L|^2 + 3 |L|^4 + 8 \tr(L) \tr(L^3) - 6 \tr(L^4)
$$
gives
$$
   24 \sigma_4(L) = n(n-1)(n-2)(n-3) H^4 - 6 (n-2)(n-3) H |\lo|^2  + 8(n-3) H \tr(\lo^3) + 3 (|\lo|^4 - 2 \tr(\lo^4)).
$$

\begin{cor}\label{trace-id} Let $n=3$. Then $|\lo|^4 = 2 \tr(\lo^4)$.
\end{cor}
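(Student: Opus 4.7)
The plan is to reduce the statement to a pure identity for a symmetric trace-free $3\times 3$ matrix, and then establish it by either Newton's identities or Cayley-Hamilton. At each point of $M$, $\lo$ is represented (via the induced metric) by a symmetric trace-free $3\times 3$ matrix, so it has three real eigenvalues $\lambda_1,\lambda_2,\lambda_3$ with $\lambda_1+\lambda_2+\lambda_3=0$. In these terms, $|\lo|^2=\tr(\lo^2)=\sum\lambda_i^2$ and $\tr(\lo^4)=\sum\lambda_i^4$, so the claim becomes the scalar identity
\[
 \Bigl(\sum_i \lambda_i^2\Bigr)^2 = 2 \sum_i \lambda_i^4 \qquad\text{whenever }\sum_i\lambda_i=0.
\]

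First I would apply Newton's identities to the elementary symmetric polynomials $e_1,e_2,e_3$ of $\lambda_1,\lambda_2,\lambda_3$. Setting $p_k=\sum_i\lambda_i^k$ and using $e_1=0$, the standard recursions give $p_2=-2e_2$, $p_3=3e_3$ and $p_4=-e_2p_2+e_3p_1=-e_2p_2=2e_2^2$. Substituting $e_2=-p_2/2$ into $p_4=2e_2^2$ yields $p_4=\tfrac12 p_2^2$, i.e.\ $2\tr(\lo^4)=\tr(\lo^2)^2=|\lo|^4$, as claimed.

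Alternatively, and perhaps more conceptually, I would invoke Cayley-Hamilton: the characteristic polynomial of $\lo$ is $t^3+e_2 t - e_3$ (since $e_1=0$), so $\lo^3=-e_2\lo+e_3 I$. Multiplying by $\lo$ and taking a trace gives $\tr(\lo^4)=-e_2\tr(\lo^2)+e_3\tr(\lo)=-e_2\tr(\lo^2)$. Combined with $\tr(\lo^2)=-2e_2$ this yields $\tr(\lo^4)=2e_2^2=\tfrac12\tr(\lo^2)^2$, which is the desired identity.

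There is no real obstacle here: the statement is purely algebraic and local, and the dimension hypothesis $n=3$ enters only through the fact that a $3\times 3$ matrix has characteristic polynomial of degree $3$ (equivalently, only three power sums $p_1,p_2,p_3$ are independent). The identity fails in dimensions $n\ge 4$ precisely because $p_4$ is then no longer forced by $p_1=0$ and $p_2$.
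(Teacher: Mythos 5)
Your proof is correct and rests on the same mechanism as the paper's: Newton's identities (equivalently Cayley--Hamilton) for a $3\times 3$ matrix, which force $p_4=\tfrac12 p_2^2$ once $p_1=0$. The paper packages this slightly differently --- it specializes its general-$n$ Newton formula for $24\sigma_4(L)$ written in terms of $H$ and $\lo$, using $\sigma_4(L)=0$ for a three-dimensional hypersurface and the vanishing of all $(n-3)$-factors at $n=3$ --- whereas you apply the identity directly to the trace-free tensor $\lo$, which is a cleaner but essentially equivalent route.
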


This result generalizes the fact that $\tr(\lo^3)=0$ for $n=2$.


\begin{example}
Assume that $g = r^2 g_+$ for a Poincar\'e-Einstein metric $g_+$ with conformal infinity $h$ (as in
Example \ref{PE-trace-test}). By $L=0$, the formula \eqref{v4-L} reads
\begin{align*}
    24 v_4  & =  - \bar{\nabla}_0^2(\overline{\Ric})_{00} + 3 (\overline{\Ric}_{00})^2 - 2 (\bar{\G},\bar{\G}).
\end{align*}
As above, we obtain
$$
   24 v_4 = -|\Rho|^2 + 3 \J^2 - 2 |\Rho|^2
$$
using the fact that $\bar{\G} = \Rho$ implies $\overline{\Ric}_{00} = \J$. This yields the well-known formula
$$
   v_4 = \frac{1}{8} (\J^2 - |\Rho|^2).
$$
\end{example}

\subsection{Evaluation I}\label{E1}

Now we combine the formula \eqref{B3-start} for $\B_3$ with the results in Section \ref{vol-c}.
A calculation yields the following result.

\begin{lem}\label{B3-inter-1} $12 \B_3$ equals the sum of
\begin{align}\label{B3-a}
   & \left(\bar{\nabla}_0^2(\overline{\Ric})_{00} - \frac{1}{2} \overline{\scal}''\right)  +
   5 H \left(\bar{\nabla}_0(\overline{\Ric})_{00} - \frac{1}{2} \overline{\scal}'\right)
   + 2 H \bar{\nabla}_0(\overline{\Ric})_{00} \notag \\
   & + 2 |\bar{\G}|^2  - 2 (\overline{\Ric}_{00})^2 + 2 \overline{\Ric}_{00} \bar{\J}
   + 8 H^2 \overline{\Ric}_{00} - 12 H^2 \bar{\J} + 6 (dH, \overline{\Ric}_0)
  + 2 \Delta (\overline{\Rho}_{00})  ,
\end{align}
\begin{equation}\label{B3-b}
   - 2 \lo^{ij} \bar{\nabla}_0(\bar{R})_{0ij0} - 2 H (\lo,\bar{\G}) + 8 (\lo^2,\bar{\G})
   - 4 |\lo|^2 \overline{\Ric}_{00}
   + 2 |\lo|^2 \bar{\J}
\end{equation}
and
\begin{equation}\label{B3-c}
   \Delta (|\lo|^2) + 6 (\lo,\Hess(H)) + 6  H \tr(\lo^3) + |\lo|^4 + 12 |dH|^2.
\end{equation}
\end{lem}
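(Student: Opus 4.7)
The plan is to start from the raw expression in Lemma \ref{B3-volume} and substitute in all the simplifications already obtained in this section, then regroup the terms according to the three families in the statement.

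More concretely, I would first multiply the formula \eqref{B3-start} through by $12$, so that the coefficients become integers. I would then insert the expressions for $v_1,v_2,v_3$ from Lemma \ref{v-coeff-3} specialized to $n=3$, i.e.
\[
 v_1 = 3H,\quad 2v_2 = -\overline{\Ric}_{00} - |\lo|^2 + 6H^2,\quad 6v_3 = -\bar{\nabla}_0(\overline{\Ric})_{00} + 2(\lo,\bar{\G}) - 7H\,\overline{\Ric}_{00} + 2\tr(\lo^3) - 3H|\lo|^2 + 6H^3,
\]
together with the formula \eqref{v4-L-3} for $v_4$ (recalling $\sigma_4(L)=0$ in dimension $3$). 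For the contribution of the last line of \eqref{B3-start} I would apply Lemma \ref{last-line}, which already converts $-\tfrac{1}{2}\Delta(\sigma_{(3)}) - \tfrac{1}{3}v_1 \Delta(\sigma_{(2)}) - \tfrac{1}{2}\Delta'(\sigma_{(2)}) + |d\sigma_{(2)}|^2$ into the combination
\[
 \tfrac{1}{12}\Delta(|\lo|^2) + \tfrac{1}{2}(\lo,\Hess(H)) + \tfrac{1}{6}\Delta(\bar{\Rho}_{00}) + \tfrac{1}{2}(dH,\overline{\Ric}_0) + |dH|^2.
\]
This piece will supply the two $\Delta$-terms and the $(dH,\overline{\Ric}_0)$-term in \eqref{B3-a}, as well as the purely intrinsic contributions $\Delta(|\lo|^2)$, $6(\lo,\Hess(H))$ and $12|dH|^2$ in \eqref{B3-c}.

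Next I would reorganize the resulting sum by sorting each monomial by its dependence on the trace-free second fundamental form: terms with no $\lo$ go into \eqref{B3-a}, terms linear or quadratic in $\lo$ paired with background curvature go into \eqref{B3-b}, and terms depending only on $\lo$ and $H$ go into \eqref{B3-c}. The $-24v_4$ contribution, via \eqref{v4-L-3}, naturally splits into the $\bar{\nabla}_0^2(\overline{\Ric})_{00}$, $\lo^{ij}\bar{\nabla}_0(\bar{R})_{0ij0}$, $H\,\bar{\nabla}_0(\overline{\Ric})_{00}$, $(\overline{\Ric}_{00})^2$, $|\bar{\G}|^2$, $H(\lo,\bar{\G})$, $(\lo^2,\bar{\G})$, $|\lo|^2\overline{\Ric}_{00}$ and $H^2\overline{\Ric}_{00}$ contributions that appear in \eqref{B3-a}-\eqref{B3-b}. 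The $\bar{\J}''$, $\bar{\J}'v_1$ and $\bar{\J} v_2$ terms produce the $\overline{\scal}''$, $H\,\overline{\scal}'$, and $\overline{\Ric}_{00}\bar{\J}$, $|\lo|^2\bar{\J}$, $H^2\bar{\J}$ entries of \eqref{B3-a} and \eqref{B3-b}. Finally, the polynomial combinations $v_1 v_3$, $v_2^2$, $v_1^2 v_2$ and $v_1^4$ generate the $H\tr(\lo^3)$, $|\lo|^4$ and the remaining scalar-curvature-free polynomial expressions; here the identity $|\lo|^4 = 2\tr(\lo^4)$ from Corollary \ref{trace-id} is used to eliminate the $\tr(\lo^4)$ that $v_2^2$ produces after expansion.

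The argument is therefore elementary: substitute, expand, and collect. The only place where genuine care is needed is the bookkeeping of the many monomials in $H$, $|\lo|^2$, $\tr(\lo^3)$, $\tr(\lo^4)$ and $\overline{\Ric}_{00}$; this is where I expect the main obstacle, since several cancellations between the purely polynomial part coming from $v_1^j v_k$ and the trace-coupled part coming from $-24 v_4$ must line up exactly to produce the coefficients $-4$, $+2$, $+8$, $-12$ and $6$ in front of $|\lo|^2\overline{\Ric}_{00}$, $|\lo|^2\bar{\J}$, $H^2\overline{\Ric}_{00}$, $H^2\bar{\J}$ and $H\tr(\lo^3)$, respectively. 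Once the bookkeeping is correct, the three groupings \eqref{B3-a}, \eqref{B3-b}, \eqref{B3-c} emerge directly and no further identities are required at this stage — the deeper curvature identities of Section \ref{second} are reserved for the subsequent subsections that refine this intermediate formula into Theorem \ref{main1}.
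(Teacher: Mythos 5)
Your plan is exactly the paper's own proof: the paper states that Lemma \ref{B3-inter-1} follows by combining \eqref{B3-start} with the volume-coefficient formulas of Section \ref{vol-c} (i.e.\ Lemma \ref{v-coeff-3} at $n=3$, the expression \eqref{v4-L-3} for $v_4$ with $\sigma_4(L)=0$, and Lemma \ref{last-line} for the last line) and then omits the bookkeeping, which is precisely what you propose to carry out. One small inaccuracy: expanding $v_2^2$ via $2v_2=-\overline{\Ric}_{00}-|\lo|^2+6H^2$ produces $|\lo|^4$ but never $\tr(\lo^4)$, so Corollary \ref{trace-id} is not actually needed at this stage (the only potential source of $\tr(\lo^4)$, namely $\sigma_4(L)$ inside $v_4$, already vanishes identically in dimension $3$); this does not affect the validity of your argument.
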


Since the terms in \eqref{B3-a} and \eqref{B3-b} vanish for the flat metric, we immediately reproduced
formula \eqref{B3-flat}. For later reference, we formulate that result as

\begin{cor}\label{B3-inter-corr}
For a hypersurface $M$ in the flat background $\R^4$, the obstruction $\B_3$ is given by
\begin{equation}\label{B3-flat-new}
   \Delta (|\lo|^2) + 6 (\lo,\Hess(H)) + 6  H \tr(\lo^3) + |\lo|^4 + 12 |dH|^2.
\end{equation}
\end{cor}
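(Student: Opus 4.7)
The plan is to simply apply Lemma \ref{B3-inter-1} and specialize to the flat background. Lemma \ref{B3-inter-1} expresses $12\B_3$ as a sum of three groups of terms \eqref{B3-a}, \eqref{B3-b}, \eqref{B3-c}, and the claim is that in the flat case the first two groups vanish identically, leaving only \eqref{B3-c}.

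Concretely, I would first note that for the flat background $g = g_{\R^4}$ every intrinsic ambient curvature quantity is zero: $\bar{R}=0$, hence $\overline{W}=0$, $\overline{\Ric}=0$, $\overline{\scal}=0$, $\bar{\Rho}=0$, and consequently $\bar{\G}_{ij} = \bar{R}_{0ij0} = 0$, $\bar{\J}=0$, and $\bar{\Rho}_0 = 0$. Any iterated covariant derivative of these in the ambient direction (such as $\bar{\nabla}_0(\overline{\Ric})_{00}$, $\bar{\nabla}_0^2(\overline{\Ric})_{00}$, $\overline{\scal}'$, $\overline{\scal}''$, $\bar{\nabla}_0(\bar{R})_{0ij0}$) therefore vanishes as well. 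Likewise $\Delta(\bar{\Rho}_{00})=0$ and $(dH,\overline{\Ric}_0)=0$.

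Next I would inspect the expression \eqref{B3-a}: each of its summands contains a factor of $\overline{\Ric}$, $\overline{\scal}$, $\bar{\J}$, $\bar{\G}$, $\bar{\Rho}$, or one of their derivatives, and so it vanishes term by term. The same inspection applied to \eqref{B3-b} shows that every summand carries either $\bar{\nabla}_0(\bar{R})$, $\bar{\G}$, $\overline{\Ric}_{00}$, or $\bar{\J}$, so it too vanishes term by term. What remains is exactly \eqref{B3-c}, namely
\[
  12 \B_3 = \Delta(|\lo|^2) + 6(\lo,\Hess(H)) + 6 H \tr(\lo^3) + |\lo|^4 + 12 |dH|^2,
\]
which is the asserted formula \eqref{B3-flat-new}. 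I should also remark that this recovers \eqref{B3-flat} derived earlier directly from the definition of $\B_3$, giving an independent consistency check.

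There is no real obstacle here, since the corollary is essentially a one-line specialization of Lemma \ref{B3-inter-1}. The only thing worth being careful about is confirming that each individual term in \eqref{B3-a} and \eqref{B3-b} genuinely carries an ambient-curvature factor — this is a routine inspection, not a calculation.
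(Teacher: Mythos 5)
Your proposal is correct and matches the paper's own argument: Corollary \ref{B3-inter-corr} is obtained there exactly by observing that all ambient-curvature terms in \eqref{B3-a} and \eqref{B3-b} vanish for the flat metric, leaving \eqref{B3-c}. The term-by-term inspection you describe is precisely what the paper relies on (it states this in one sentence after Lemma \ref{B3-inter-1}), so no further comment is needed.
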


We continue with the discussion of the curved case.

Next, we simplify the sum \eqref{B3-a} using the second Bianchi identity. This step is analogous to the usage
of the second Bianchi identity in Section \ref{B2-cl}.       \index{$\bar{G}$ \quad Einstein tensor}

Let $\bar{G} \st \overline{\Ric} - \frac{1}{2} \overline{\scal} g$ be the Einstein tensor of $g$. The second
Bianchi identity implies $2 \delta^g (\overline{\Ric}) = d \overline{\scal}$. Hence
\begin{align*}
   & \bar{\nabla}_0(\overline{\Ric})(\partial_0,\partial_0) \\
   & = \delta^g(\overline{\Ric})(\partial_0) - g^{ij} \bar{\nabla}_{\partial_i}(\overline{\Ric})(\partial_j,\partial_0) \\
   & =  \frac{1}{2} \langle d \overline{\scal},\partial_0 \rangle - g^{ij} \partial_i (\overline{\Ric}(\partial_j,\partial_0))
   + g^{ij} \overline{\Ric} (\bar{\nabla}_{\partial_i}(\partial_j),\partial_0)
   + g^{ij} \overline{\Ric}(\partial_j,\bar{\nabla}_{\partial_i}(\partial_0)) \\
   & = \frac{1}{2} \langle d \overline{\scal},\partial_0 \rangle - h_r^{ij} \partial_i (\overline{\Ric}(\partial_j,\partial_0))
   + h_r^{ij} \overline{\Ric} (\nabla^{h_r}_{\partial_i}(\partial_j) - (L_r)_{ij} \partial_0,\partial_0)
   + h_r^{ij} \overline{\Ric}(\partial_j,\bar{\nabla}_{\partial_i}(\partial_0))  \\
   & = \frac{1}{2} \langle d \overline{\scal},\partial_0 \rangle - \delta^{h_r} (\overline{\Ric}_0)
   - n H_r \overline{\Ric}_{00} + h_r^{ij} \overline{\Ric}(\partial_j,\bar{\nabla}_{\partial_i}(\partial_0))
\end{align*}
on any level surface of $r$. Here $\delta^{h_r}$ denotes the divergence operator for the induced metric
on the level surfaces of $r$. Similarly, $L_r$ and $H_r$ are the second fundamental form and the
mean curvature of these level surfaces. Therefore, using $\bar{\nabla}_{\partial_i}(\partial_0)
= (L_r)_{ia} h_r^{ak} \partial_k$, we obtain
\begin{align*}
   \bar{\nabla}_0 (\bar{G})_{00} & = -\delta^{h_r} (\overline{\Ric}_0)
  - n H_r \overline{\Ric}_{00} + h_r^{ij} h_r^{ak} (L_r)_{ia} \overline{\Ric}_{jk},
\end{align*}
i.e., we have proved the relation
\begin{equation}\label{Bianchi}
    \bar{\nabla}_0(\bar{G})_{00}
    = - \delta^{h_r} (\overline{\Ric}_0) - n H_r \overline{\Ric}_{00} + (L_r,\overline{\Ric})_{h_r}
\end{equation}
on any level surface of $r$. Differentiating the identity \eqref{Bianchi} (for $n=3$)
with respect to $r$ at $r=0$ gives a formula for the term
$$
   \bar{\nabla}_0^2(\overline{\Ric})_{00} - \frac{1}{2} \overline{\scal}''
   = \bar{\nabla}_0^2 (\bar{G})_{00} = \partial_r (\bar{\nabla}_0 (\bar{G})_{00})
$$
for $r=0$. For that purpose, we use the variation formulas
\begin{align}\label{BV}
    3 H' & = - |L|^2 - \overline{\Ric}_{00}, \notag \\
    L' & = L^2 - \bar{\G}
\end{align}
for the variation of these quantities under the normal exponential map. Here we denote the derivative
in $r$ by a prime. We recall that in normal geodesic coordinates the metric $g$ takes the form $dr^2 +h_r$ with
$h_r = h + 2r L + \cdots$.  Moreover, let ${\delta}' \st (d/dr)|_0(\delta^{h_r})$. Then
\begin{align}\label{vdelta}
    {\delta}' (\omega) = - 2 (L, \nabla (\omega))_h -  2 (\delta (L),\omega)_h + 3 (dH,\omega)_h
\end{align}
for $\omega \in \Omega^1(M^3)$ \cite[(1.185]{Besse}. Note that the latter identity fits with the
variation formula \eqref{vDelta}. Now differentiating \eqref{Bianchi} (for $n=3$) implies
\begin{align*}
    \bar{\nabla}_0^2 (\bar{G})_{00} & = - {\delta}' (\overline{\Ric}_0) - \delta (\partial_r(\overline{\Ric}_0))
   - 3 {H}' \overline{\Ric}_{00} - 3 H \bar{\nabla}_0(\overline{\Ric})_{00} \\
    & + ({L}', \overline{\Ric}) + L^{ij} \partial_r (\overline{\Ric}_{ij}) - 4 (L^2,\overline{\Ric}),
\end{align*}
where $L' = (d/dr)|_0(L_r)$.  Note that the last term comes from the differentiation of $h_r$. Hence
\begin{align*}
    \bar{\nabla}_0^2 (\bar{G})_{00} & = 2 (L, \nabla (\overline{\Ric}_0)) + 2 (\delta(L), \overline{\Ric}_0)
   - 3 (dH,\overline{\Ric}_0) \notag \\
   & - \delta (\bar{\nabla}_0(\overline{\Ric})_{0}) - \delta ((L \overline{\Ric})_{0})
   + |L|^2  \overline{\Ric}_{00} + (\overline{\Ric}_{00})^2 - 3 H \bar{\nabla}_0 (\overline{\Ric})_{00} \notag \\
   & - 3 (L^2, \overline{\Ric}) - (\bar{\G}, \overline{\Ric}) + (L, \bar{\nabla}_0(\overline{\Ric}))
  + 2 (L^2, \overline{\Ric})
\end{align*}
at $r=0$. Here we used the relations
\begin{align*}
   \bar{\nabla}_0(\overline{\Ric}_0) = \partial_r (\overline{\Ric}_{00})  - (L \overline{\Ric})_{0} \quad \mbox{and} \quad 
   \bar{\nabla}_0(\overline{\Ric})_{ij} = \partial_r (\overline{\Ric}_{ij}) - (L \overline{\Ric} + \overline{\Ric} L)_{ij}.
\end{align*}
Now, separating the trace-free part of $L$ in some terms, we obtain
\begin{align*}
    \bar{\nabla}_0^2 (\bar{G})_{00} & = 2 (\lo,\nabla (\overline{\Ric}_0)) + 2 H \delta (\overline{\Ric}_0)
   + 2 (\delta(\lo),\overline{\Ric}_0) - (dH,\overline{\Ric}_0) \\
   & - \delta (\bar{\nabla}_0(\overline{\Ric})_{0}) - \delta ((\lo \overline{\Ric})_{0}) - \delta (H \overline{\Ric}_0) \\
   & + |L|^2  \overline{\Ric}_{00} + (\overline{\Ric}_{00})^2 - 3 H \bar{\nabla}_0 (\overline{\Ric})_{00} \\
   & - (L^2, \overline{\Ric}) - (\bar{\G}, \overline{\Ric})
   + (\lo, \bar{\nabla}_0(\overline{\Ric})) + H \overline{\scal}' - H \bar{\nabla}_0(\overline{\Ric})_{00}.
\end{align*}
This leads to the following result.

\begin{lem}\label{Nabla-2G} It holds
\begin{align*}
     \bar{\nabla}_0^2 (\bar{G})_{00} =  & - 4  H \bar{\nabla}_0 (\overline{\Ric})_{00} + H \overline{\scal}' \\
    & + 2 (\lo, \nabla (\overline{\Ric}_0))
    - \delta (\bar{\nabla}_0(\overline{\Ric})_{0}) + (\lo, \bar{\nabla}_0(\overline{\Ric})) \\
    & +  H \delta (\overline{\Ric}_0) - 2 (dH,\overline{\Ric}_0) + 2 (\delta(\lo),\overline{\Ric}_0)
    - \delta ((\lo \overline{\Ric})_{0}) \\
    & + |L|^2  \overline{\Ric}_{00} - (L^2, \overline{\Ric}) + (\overline{\Ric}_{00})^2 - (\bar{\G}, \overline{\Ric}).
\end{align*}
\end{lem}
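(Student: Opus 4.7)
The plan is to derive the identity by differentiating the second-Bianchi identity \eqref{Bianchi} in the normal direction at $r=0$. On each level surface of $r$ one has
$$\bar{\nabla}_0(\bar{G})_{00} = -\delta^{h_r}(\overline{\Ric}_0) - nH_r\,\overline{\Ric}_{00} + (L_r,\overline{\Ric})_{h_r},$$
so specializing to $n=3$ and applying $\partial_r|_{r=0}$ converts the left-hand side into $\bar{\nabla}_0^2(\bar{G})_{00}$ and reduces the lemma to a single extended computation in which each factor on the right is differentiated and then re-expressed covariantly.

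First I would differentiate $\delta^{h_r}(\overline{\Ric}_0)$ as $\delta'(\overline{\Ric}_0) + \delta(\partial_r(\overline{\Ric}_0))$, using \eqref{vdelta} for $\delta'$ and the relation $\partial_r(\overline{\Ric}_0) = \bar{\nabla}_0(\overline{\Ric})_0 + (L\overline{\Ric})_0$ to eliminate the coordinate $r$-derivative. Next I would differentiate $3H_r\overline{\Ric}_{00}$ using the variation formula $3H' = -|L|^2 - \overline{\Ric}_{00}$ from \eqref{BV}, which produces the polynomial terms $|L|^2\overline{\Ric}_{00} + (\overline{\Ric}_{00})^2$ together with $-3H\bar{\nabla}_0(\overline{\Ric})_{00}$. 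Finally, for $(L_r,\overline{\Ric})_{h_r}$ I would apply $L' = L^2 - \bar{\G}$, expand the inverse metric (its first $r$-derivative contributing $-2L^{ij}$), and use $\bar{\nabla}_0(\overline{\Ric})_{ij} = \partial_r(\overline{\Ric}_{ij}) - (L\overline{\Ric} + \overline{\Ric}L)_{ij}$ to obtain the group $-3(L^2,\overline{\Ric}) - (\bar{\G},\overline{\Ric}) + (L,\bar{\nabla}_0(\overline{\Ric}))$ plus two further $(L^2,\overline{\Ric})$ contributions. A separate careful step is to produce the term $H\overline{\scal}'$: it arises by taking the $H$-piece of $L$ inside $(L,\bar{\nabla}_0(\overline{\Ric}))$ and recognizing it as $H$ times the trace of $\bar{\nabla}_0(\overline{\Ric})$, which equals $H\overline{\scal}' - H\bar{\nabla}_0(\overline{\Ric})_{00}$ by the same trace relation that was used to derive \eqref{Bianchi}.

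After collecting everything, I would split $L=\lo + Hh$ wherever a divergence hits $L\overline{\Ric}$ or $\delta(L)$, so that $H$ is pulled outside a $\delta(\overline{\Ric}_0)$ and the remaining trace-free pieces $\delta((\lo\overline{\Ric})_0)$, $(\delta(\lo),\overline{\Ric}_0)$, $(\lo,\nabla(\overline{\Ric}_0))$ appear as stated. The hard part will not be any single computation but the bookkeeping: several $\partial_r$-to-$\bar{\nabla}_0$ conversions and inverse-metric expansions each spawn multiple curvature terms of the same algebraic type, and only by tracking signs carefully — especially in the cancellation that turns $-3H'\overline{\Ric}_{00}$ together with the inverse-metric contribution into the clean pair $|L|^2\overline{\Ric}_{00}-(L^2,\overline{\Ric})$, and in the isolation of $H\overline{\scal}'$ — does the final expression organize itself into the four lines of the stated identity.
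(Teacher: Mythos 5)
Your proposal is correct and follows essentially the same route as the paper: differentiate the Bianchi relation \eqref{Bianchi} at $r=0$ for $n=3$, use \eqref{vdelta} and \eqref{BV}, convert $\partial_r$-derivatives of $\overline{\Ric}$ into $\bar{\nabla}_0$-derivatives with $L$-correction terms (including the two inverse-metric contributions $-4(L^2,\overline{\Ric})$), extract $H\overline{\scal}'-H\bar{\nabla}_0(\overline{\Ric})_{00}$ from the trace part of $(L,\bar{\nabla}_0(\overline{\Ric}))$, and finally split $L=\lo+Hh$ in the divergence and contraction terms. All the individual identities you invoke (e.g.\ $\partial_r(\overline{\Ric}_0)=\bar{\nabla}_0(\overline{\Ric})_0+(L\overline{\Ric})_0$ and the $+2(L^2,\overline{\Ric})$ from $L^{ij}\partial_r(\overline{\Ric}_{ij})$) match the paper's computation.
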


Lemma \ref{Nabla-2G} enables us to replace second-order normal derivatives in Lemma \ref{B3-inter-1} by first-order
normal and tangential derivatives. More precisely, it follows that \eqref{B3-a} equals
\begin{align*}
    & 3 H \bar{\nabla}_0(\bar{G})_{00}
   + 2 (\lo, \nabla (\overline{\Ric}_0)) - \delta (\bar{\nabla}_0(\overline{\Ric})_{0})
   + (\lo, \bar{\nabla}_0(\overline{\Ric})) \\
   & +  H \delta (\overline{\Ric}_0) + 4 (dH,\overline{\Ric}_0) + 2 (\delta(\lo),\overline{\Ric}_0)
    - \delta ((\lo \overline{\Ric})_{0}) \\
   & + |L|^2  \overline{\Ric}_{00} - (L^2, \overline{\Ric}) + (\overline{\Ric}_{00})^2 - (\bar{\G}, \overline{\Ric}) \\
   & + 2 |\bar{\G}|^2 - 2 (\overline{\Ric}_{00})^2 + 2 \overline{\Ric}_{00} \bar{\J}
   + 2 \Delta (\overline{\Rho}_{00}) + 8 H^2 \overline{\Ric}_{00} - 12 H^2 \bar{\J}.
\end{align*}

Now a second application of the second Bianchi identity \eqref{Bianchi} enables us to replace the first-order normal derivative
of the Einstein tensor in this formula by tangential derivatives. Hence \eqref{B3-a} equals the sum
\begin{align*}
   & -3 H \delta (\overline{\Ric}_0) - 9 H^2 \overline{\Ric}_{00} + 3 H (L,\overline{\Ric})  \\
   &  + 2 (\lo, \nabla (\overline{\Ric}_0))) - \delta (\bar{\nabla}_0(\overline{\Ric})_{0})
   + (\lo, \bar{\nabla}_0(\overline{\Ric})) \\
   & +  H \delta (\overline{\Ric}_0) + 4 (dH,\overline{\Ric}_0) + 2 (\delta(\lo),\overline{\Ric}_0)
    - \delta ((\lo \overline{\Ric})_{0}) \\
   & + |L|^2  \overline{\Ric}_{00} - (L^2, \overline{\Ric}) + (\overline{\Ric}_{00})^2
   - (\bar{\G}, \overline{\Ric}) \\
   & + 2 |\bar{\G}|^2 - 2 (\overline{\Ric}_{00})^2
   + 2 \overline{\Ric}_{00} \bar{\J} + 2 \Delta (\overline{\Rho}_{00}) + 8 H^2 \overline{\Ric}_{00}
   - 12 H^2 \bar{\J}.
\end{align*}
A slight reordering and simplification of this sum shows that the sum \eqref{B3-a} equals
\begin{align}\label{H1}
   & - \delta (\bar{\nabla}_0(\overline{\Ric})_{0}) - 2 H  \delta (\overline{\Ric}_0)
  +4  (dH, \overline{\Ric}_0)  \notag \\
   & + (\lo, \bar{\nabla}_0(\overline{\Ric}))
  + 2 (\lo, \nabla (\overline{\Ric}_0)) + 2 (\delta(\lo),\overline{\Ric}_0) - \delta ((\lo \overline{\Ric})_{0})  \notag \\
  & + |L|^2  \overline{\Ric}_{00} - (L^2, \overline{\Ric})  + 3 H (L,\overline{\Ric}) \notag \\
  &- (\overline{\Ric}_{00})^2 - (\bar{\G}, \overline{\Ric})  + 2 |\bar{\G}|^2
   + 2 \overline{\Ric}_{00} \bar{\J} + 2 \Delta (\overline{\Rho}_{00}) - H^2  \overline{\Ric}_{00}  - 12 H^2 \bar{\J} .
\end{align}

We continue by further simplifying the sum \eqref{H1}. First of all, we observe

\begin{lem}\label{van-term} Let $n=3$. Then
$$
   - (\overline{\Ric}_{00})^2 - (\bar{\G}, \overline{\Ric})  + 2 |\bar{\G}|^2
   + 2  \overline{\Ric}_{00} \bar{\J} = 2 (\bar{\Rho},\W) + 2 |\W|^2.
$$
\end{lem}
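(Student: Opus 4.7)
The identity is pointwise and purely algebraic in the ambient curvature. My plan is to use the four-dimensional Weyl decomposition of $\bar{R}$ to express every term on the left-hand side in terms of $\W$, the tangential restriction of $\bar{\Rho}$, and the scalars $\bar{\J},\bar{\Rho}_{00}$. Applying the decomposition (the four-dimensional analogue of \eqref{KN}) to the component $\bar{R}_{0ij0}$, where $g_{00}=1$ and $g_{0i}=0$, yields the key identity
\begin{equation*}
  \bar{\G}_{ij} \;=\; \W_{ij} + \bar{\Rho}_{ij} + \bar{\Rho}_{00}\,h_{ij}.
\end{equation*}
Tracing this with $h^{ij}$, together with $\tr_h\W=0$ and $\tr_h\bar{\Rho} = \bar{\J} - \bar{\Rho}_{00}$, recovers the familiar companion relation $\overline{\Ric}_{00} = \bar{\J} + 2\bar{\Rho}_{00}$.

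Next, from the four-dimensional Schouten--Ricci identity $\overline{\Ric} = 2\bar{\Rho} + \bar{\J}\,g$ I would solve the decomposition of $\bar{\G}$ for $\bar{\Rho}_{ij}|_{TM}$ and substitute, obtaining
\begin{equation*}
  \overline{\Ric}_{ij}\big|_{TM} \;=\; 2\bar{\G}_{ij} - 2\W_{ij} + (\bar{\J} - 2\bar{\Rho}_{00})\,h_{ij},
\end{equation*}
and hence
\begin{equation*}
  (\bar{\G},\overline{\Ric}) \;=\; 2|\bar{\G}|^2 - 2(\bar{\G},\W) + (\bar{\J} - 2\bar{\Rho}_{00})\,\overline{\Ric}_{00}.
\end{equation*}
Plugging this into the left-hand side of the lemma causes the two $2|\bar{\G}|^2$ contributions to cancel immediately, and the remaining scalar-valued terms collapse via $\overline{\Ric}_{00} = \bar{\J} + 2\bar{\Rho}_{00}$ to
\begin{equation*}
  \overline{\Ric}_{00}\bigl(-\overline{\Ric}_{00} - (\bar{\J} - 2\bar{\Rho}_{00}) + 2\bar{\J}\bigr) \;=\; \overline{\Ric}_{00}\bigl(\bar{\J} + 2\bar{\Rho}_{00} - \overline{\Ric}_{00}\bigr) \;=\; 0.
\end{equation*}

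What survives is $2(\bar{\G},\W)$. Expanding once more via $\bar{\G} = \W + \bar{\Rho} + \bar{\Rho}_{00}\,h$ and again using $\tr_h\W = 0$ gives $2|\W|^2 + 2(\bar{\Rho},\W)$, which is precisely the right-hand side. I do not anticipate any conceptual obstacle: the argument is pure algebraic bookkeeping. The only point requiring care is adhering to the paper's sign convention for the Kulkarni--Nomizu decomposition (as fixed by \eqref{KN} in the three-dimensional case); getting this sign right is what guarantees the final cancellation of the $\overline{\Ric}_{00}$-linear terms.
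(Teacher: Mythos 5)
Your proof is correct and follows essentially the same route as the paper's: the decomposition $\bar{\G}_{ij} = \bar{\Rho}_{ij} + \bar{\Rho}_{00}h_{ij} + \W_{ij}$, substitution of $\overline{\Ric}_{ij} = 2\bar{\G}_{ij} - 2\W_{ij} + (\bar{\J}-2\bar{\Rho}_{00})h_{ij}$ into $(\bar{\G},\overline{\Ric})$, cancellation via $\overline{\Ric}_{00} = \bar{\J} + 2\bar{\Rho}_{00}$, and the final expansion of $(\bar{\G},\W)$ using $\tr_h\W = 0$. No discrepancies to report.
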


\begin{proof} We recall that $\bar{\G} _{ij} = \bar{\Rho}_{ij} + \bar{\Rho}_{00} h_{ij} + \W_{ij}$.
Therefore, we get $\overline{\Ric}_{ij} = 2 \bar{\Rho}_{ij} + \bar{\J} h_{ij}
= 2 \bar{\G}_{ij} - 2 \W_{ij} - (2 \bar{\Rho}_{00} - \bar{\J}) h_{ij}$. Thus
$
   (\bar{\G},\overline{\Ric}) = 2 (\bar{\G},\bar{\G}) - 2 (\bar{\G},\W) - (2 \bar{\Rho}_{00} - \bar{\J})
   \overline{\Ric}_{00}.
$
This relation implies
$$
   2 |\bar{\G}|^2 - (\bar{\G}, \overline{\Ric})
  = (2 \bar{\Rho}_{00}  -\bar{\J})  \overline{\Ric}_{00} + 2 (\bar{\G},\W)  =
   (\overline{\Ric}_{00})^2 - 2\overline{\Ric}_{00} \bar{\J} + 2 (\bar{\Rho},\W)  + 2 |\W|^2.
$$
The proof is complete.
\end{proof}

Next, we have the following identities.

\begin{lem}\label{help-2} Let $n=3$. Then
$$
   |L|^2 \overline{\Ric}_{00} - (L^2, \overline{\Ric}) = |\lo|^2 \overline{\Ric}_{00} - (\lo^2,\overline{\Ric})
   - 2H (\lo,\overline{\Ric}) + 4 H^2 \overline{\Ric}_{00} - 6 H^2 \bar{\J}
$$
and
$$
   (L,\overline{\Ric}) = (\lo,\overline{\Ric}) + 6 H \bar{\J} - H \overline{\Ric}_{00}.
$$
\end{lem}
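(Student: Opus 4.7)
The plan is to treat both identities as purely algebraic consequences of the decomposition $L = \lo + Hh$ together with the tangential trace identity $h^{ij}\overline{\Ric}_{ij} = \overline{\scal} - \overline{\Ric}_{00} = 6\bar{\J} - \overline{\Ric}_{00}$, where the last step uses the convention $2(n-1)\bar{\J} = \overline{\scal}$ on the ambient four-manifold, so $\overline{\scal} = 6\bar{\J}$. No curvature identities are needed beyond this tracing relation, so the whole proof is a bookkeeping exercise.

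For the second identity, I would simply substitute $L = \lo + H h$ into $(L,\overline{\Ric})$, use $(\lo, h) = \tr(\lo) = 0$ to kill the cross term pairing $\lo$ with $h$, and read off the remaining scalar from the tangential trace above. This instantly gives $(L,\overline{\Ric}) = (\lo,\overline{\Ric}) + H(6\bar{\J} - \overline{\Ric}_{00})$, which is the claim.

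For the first identity, I would compute the two ingredients $|L|^2$ and $(L^2,\overline{\Ric})$ separately. Using $(h,h) = 3$ (since $\dim M = 3$) and $(\lo,h) = 0$, one finds $|L|^2 = |\lo|^2 + 3H^2$. For the second piece, $L^2_{ij} = \lo^2_{ij} + 2H\lo_{ij} + H^2 h_{ij}$ by direct expansion of the shape-operator product, and pairing with $\overline{\Ric}$ and again applying the tangential trace formula produces
\[
(L^2,\overline{\Ric}) = (\lo^2,\overline{\Ric}) + 2H(\lo,\overline{\Ric}) + H^2(6\bar{\J} - \overline{\Ric}_{00}).
\]
Combining these two expansions in $|L|^2 \overline{\Ric}_{00} - (L^2,\overline{\Ric})$ and collecting the $H^2 \overline{\Ric}_{00}$ terms ($3H^2\overline{\Ric}_{00}$ from $|L|^2$ and $+H^2\overline{\Ric}_{00}$ from the subtraction) yields the coefficient $4H^2\overline{\Ric}_{00}$, while the $H^2\bar{\J}$ contribution appears only with coefficient $-6$, matching the right-hand side exactly.

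There is no genuine obstacle here; the only trap to avoid is keeping careful track of the sign of $\overline{\Ric}_{00}$ when converting $h^{ij}\overline{\Ric}_{ij}$ to $\overline{\scal}$, since the trace is over tangential indices only and the normal component must be subtracted. Once that is done consistently, both identities fall out in a single pass and the lemma is proved.
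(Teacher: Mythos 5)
Your proof is correct and follows essentially the same route as the paper, which simply records that the identities follow by direct calculation: substituting $L=\lo+Hh$, using $(\lo,h)=0$, $(h,h)=3$, and the tangential trace relation $h^{ij}\overline{\Ric}_{ij}=\overline{\scal}-\overline{\Ric}_{00}=6\bar{\J}-\overline{\Ric}_{00}$ gives both identities exactly as you describe.
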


\begin{proof} The assertions follow by direct calculation.
\end{proof}

By Lemma \ref{van-term} and Lemma \ref{help-2}, the last two lines of \eqref{H1} simplify to
\begin{align*}
   |\lo|^2 \overline{\Ric}_{00} - (\lo^2,\overline{\Ric}) + H (\lo,\overline{\Ric}) + 2 \Delta (\overline{\Rho}_{00})
   + 2(\bar{\Rho},\W) + 2 |\W|^2.
\end{align*}
Therefore, \eqref{B3-a} equals
\begin{align*}\label{H1a}
    & - \delta (\bar{\nabla}_0(\overline{\Ric})_{0}) - 2 H  \delta (\overline{\Ric}_0) + 4 (dH, \overline{\Ric}_0)
   \notag \\
    & + (\lo, \bar{\nabla}_0(\overline{\Ric}))
   + 2 (\lo, \nabla (\overline{\Ric}_0)) + 2 (\delta(\lo),\overline{\Ric}_0) - \delta ((\lo \overline{\Ric})_{0})  \notag \\
    & +|\lo|^2 \overline{\Ric}_{00} - (\lo^2,\overline{\Ric}) + H (\lo,\overline{\Ric}) + 2 \Delta (\overline{\Rho}_{00})
   + 2(\bar{\Rho},\W) + 2 |\W|^2.
\end{align*}
Hence Lemma \ref{B3-inter-1} implies

\begin{lem}\label{B3-inter2}
$12 \B_3$ equals the sum of
\begin{equation}\label{B3-g}
   - \delta (\bar{\nabla}_0(\overline{\Ric})_{0}) - 2 \delta ( H \overline{\Ric}_0) + 6 (dH,\overline{\Ric}_0)
   + 2 \Delta (\overline{\Rho}_{00}),
\end{equation}
\begin{align}\label{B3-u}
   & (\lo, \bar{\nabla}_0(\overline{\Ric})) - 2 \lo^{ij} \bar{\nabla}_0(\bar{R})_{0ij0}
   + 2 (\lo, \nabla (\overline{\Ric}_0)) + 2 (\delta(\lo),\overline{\Ric}_0) - \delta ((\lo \overline{\Ric})_{0})  \notag \\
   & -3 |\lo|^2 \overline{\Ric}_{00} - (\lo^2,\overline{\Ric}) + H (\lo,\overline{\Ric}),
\end{align}
\begin{equation}\label{B3-gf}
   - 2 H (\lo,\bar{\G}) + 8 (\lo^2,\bar{\G}) + 2 |\lo|^2 \bar{\J},
\end{equation}
\begin{equation}\label{Weyl}
   2(\bar{\Rho},\W) + 2 |\W|^2
\end{equation}
and the flat terms
\begin{align}\label{B3-gc}
   & 6 (\lo,\Hess(H)) + \Delta (|\lo|^2) + 6 H \tr(\lo^3) + |\lo|^4 + 12 |dH|^2.
\end{align}
\end{lem}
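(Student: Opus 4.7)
The plan is to start from Lemma \ref{B3-inter-1}, which already expresses $12\B_3$ as the sum of three blocks: the second-order normal-derivative block \eqref{B3-a}, the mixed $\lo$-curvature block \eqref{B3-b}, and the already-simplified ``flat'' block \eqref{B3-c}. The flat block \eqref{B3-c} coincides verbatim with the group \eqref{B3-gc} in the target statement, so no work is required on it; the entire content of the lemma is to rewrite \eqref{B3-a} and combine the result with \eqref{B3-b} so as to produce the four remaining groups \eqref{B3-g}, \eqref{B3-u}, \eqref{B3-gf}, \eqref{Weyl}.

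The key tool for processing \eqref{B3-a} is the twice-contracted second Bianchi identity. First I would apply Lemma \ref{Nabla-2G} to replace $\bar\nabla_0^2(\bar G)_{00} = \bar\nabla_0^2(\overline{\Ric})_{00} - \tfrac12 \overline{\scal}''$ by an expression involving only first-order normal derivatives of $\overline{\Ric}$, a $\Delta(\overline{\Rho}_{00})$-type term (implicit through $\delta(\bar\nabla_0(\overline{\Ric})_0)$), and algebraic contractions of $L$, $\lo$, $\bar{\G}$ and $\overline{\Ric}$. This reduces \eqref{B3-a} to an expression whose only remaining normal derivative of the Einstein tensor is $3H\bar\nabla_0(\bar G)_{00}$; applying the identity \eqref{Bianchi} (specialised to $n=3$) a second time converts this term into $-3H\delta(\overline{\Ric}_0) - 9H^2\overline{\Ric}_{00} + 3H(L,\overline{\Ric})$.

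At this point the sum is a long algebraic expression involving $\overline{\Ric}_{00}$, $|\bar\G|^2$, $(\bar\G,\overline{\Ric})$, $|L|^2\overline{\Ric}_{00}$, $(L^2,\overline{\Ric})$, $H^2\overline{\Ric}_{00}$, $H^2\bar\J$, tangential divergences and $\lo$-contracted curvature derivatives. I would invoke Lemma \ref{van-term} to collapse $-(\overline{\Ric}_{00})^2 - (\bar\G,\overline{\Ric}) + 2|\bar\G|^2 + 2\overline{\Ric}_{00}\bar\J$ into the clean Weyl block $2(\bar\Rho,\W) + 2|\W|^2$, producing \eqref{Weyl}. Simultaneously, Lemma \ref{help-2} trades $|L|^2\overline{\Ric}_{00} - (L^2,\overline{\Ric})$ and $(L,\overline{\Ric})$ for their trace-free counterparts, releasing the $H^2\overline{\Ric}_{00}$ and $H^2\bar\J$ terms needed to cancel with the surviving $8H^2\overline{\Ric}_{00} - 12H^2\bar\J$ and the $-9H^2\overline{\Ric}_{00}$ produced a moment earlier. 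The $\bar\G$-contracted terms from \eqref{B3-b}, together with the $2|\lo|^2\bar\J$ byproduct of Lemma \ref{help-2}, reassemble as \eqref{B3-gf}. The $\lo$-contracted pieces regroup into \eqref{B3-u}, and the tangential divergences gather as $-\delta(\bar\nabla_0(\overline{\Ric})_0) - 2\delta(H\overline{\Ric}_0) + 6(dH,\overline{\Ric}_0) + 2\Delta(\overline{\Rho}_{00})$, which is \eqref{B3-g}.

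The principal obstacle is not any single deep identity but the bookkeeping of $H^2$- and $(\overline{\Ric}_{00})^2$-terms across three successive simplifications (the double Bianchi application, Lemma \ref{van-term}, and Lemma \ref{help-2}): these must cancel in precisely the right way so that only the Weyl-only combination \eqref{Weyl} survives and the purely divergence-type contributions recombine into the compact form \eqref{B3-g}. A secondary subtlety is the systematic splitting $L = \lo + Hh$ throughout: terms involving $(L,\bar\nabla_0(\overline{\Ric}))$, $\delta((L\overline{\Ric})_0)$ and $(\delta(L),\overline{\Ric}_0)$ must be rewritten with $\lo$ in place of $L$, with the $H$-parts either absorbed into the divergences of \eqref{B3-g} (via $\delta(H\overline{\Ric}_0)$) or combining with $(dH,\overline{\Ric}_0)$. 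Once this bookkeeping is carried out cleanly, the five groups fall out without further geometric input.
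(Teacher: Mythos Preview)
Your proposal is correct and follows essentially the same route as the paper: start from Lemma \ref{B3-inter-1}, apply Lemma \ref{Nabla-2G} and then \eqref{Bianchi} a second time to eliminate all second- and first-order normal derivatives of the Einstein tensor, and clean up the resulting expression with Lemmas \ref{van-term} and \ref{help-2}. Two small attributions are off but harmless: the term $2\Delta(\overline{\Rho}_{00})$ is already present in \eqref{B3-a} and simply carries through (it is not produced via $\delta(\bar\nabla_0(\overline{\Ric})_0)$), and the $2|\lo|^2\bar\J$ in \eqref{B3-gf} comes directly from \eqref{B3-b}, not as a byproduct of Lemma \ref{help-2}.
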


Note that the first term in \eqref{B3-g} contains a normal derivative of $\overline{\Ric}$. Likewise the first two terms
in \eqref{B3-u} contain normal derivatives of the curvature of $g$. All other terms in
\eqref{B3-g}--\eqref{B3-gc} live on $M$. The {\em mixed} terms in \eqref{B3-u} and \eqref{B3-gf} involve the curvature of
$g$ and $L$. Finally, the terms in \eqref{Weyl} involve the Weyl tensor, and the terms in \eqref{B3-gc} are completely
determined by $L$.

\begin{example}\label{B3-Einstein}
If the background metric $g$ is Einstein, i.e., if $\overline{\Ric} = \lambda g$, then
\begin{align*}
   12 \B_3 & = - 2 \lo^{ij} \bar{\nabla}_0(\widebar{W})_{0ij0} - 2 H (\lo,\W) + 8 (\lo^2,\W) + 2 |\W|^2\\
   & + 6 (\lo,\Hess(H)) + \Delta (|\lo|^2) + 6 H \tr(\lo^3) + |\lo|^4 + 12 |dH|^2.
\end{align*}
\end{example}

Of course, if in addition $\overline{W}=0$, then this formula reduces to Corollary \ref{B3-inter-corr}.

\begin{proof}
The assumption implies $\bar{\J} = \frac{2}{3} \lambda$, $\bar{\Rho} = \frac{1}{6} \lambda g$, $\bar{\G} = \frac{1}{3} \lambda h + \W$
and $|\bar{\G}|^2  = \frac{1}{3} \lambda^2 + |\W|^2$. Furthermore, the terms in \eqref{B3-g} and the terms in the first line of \eqref{B3-u}
except the second one vanish. The remaining terms in \eqref{B3-u}--\eqref{Weyl} read
$$
   -3 \lambda |\lo|^2 - \lambda |\lo|^2 -2 H(\lo,\W) + \frac{8}{3} \lambda |\lo|^2 + 8 (\lo^2,\W)
   + \frac{4}{3} \lambda |\lo|^2 + 2 |\W|^2.
$$
Simplification proves the claim.
\end{proof}

\subsection{Evaluation II}

We further simplify Lemma \ref{B3-inter2}. The following result shows that, up to
some contributions of the Weyl tensor, the first two terms in \eqref{B3-u} cancel and that
the last term of \eqref{B3-u} cancels against the first term of \eqref{B3-gf}.

\begin{lem}\label{HL1} Let $n=3$. Then it holds
\begin{align*}
   (\lo, \bar{\nabla}_0(\overline{\Ric})) + 2  \lo^{ij} \bar{\nabla}_0(\overline{W})_{0ij0}
   & = 2 \lo^{ij} \bar{\nabla}_0(\bar{R})_{0ij0}, \\
   (\lo,\overline{\Ric}) + 2 (\lo, \W)  & =  2 (\lo,\bar{\G}).
\end{align*}
\end{lem}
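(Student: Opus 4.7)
The plan is to reduce both identities to the 4-dimensional algebraic decomposition of the Riemann curvature into its Weyl and Schouten parts, then contract with $\lo^{ij}$ and exploit trace-freeness.

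First I would record the curvature decomposition on the 4-dimensional ambient space $X$. Following the sign convention of \eqref{KN}, one has
\begin{equation*}
   \bar{R}_{abcd} = \overline{W}_{abcd} - \bar{\Rho}_{ac} g_{bd} + \bar{\Rho}_{bc} g_{ad} - \bar{\Rho}_{bd} g_{ac} + \bar{\Rho}_{ad} g_{bc}.
\end{equation*}
Evaluating at indices $(0,i,j,0)$ with $i,j$ tangent (so $g_{i0}=0$, $g_{00}=1$, $g_{ij}|_M = h_{ij}$) yields the pointwise identity
\begin{equation*}
   \bar{R}_{0ij0} = \overline{W}_{0ij0} + \bar{\Rho}_{ij} + \bar{\Rho}_{00} h_{ij},
\end{equation*}
which is consistent with the $\bar{\G} = \W + \bar{\Rho} + \bar{\Rho}_{00} h$ decomposition already used in Lemma \ref{van-term}.

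For the second identity, contracting the decomposition with $\lo^{ij}$ and using $\tr(\lo)=0$ kills the $\bar{\Rho}_{00} h_{ij}$ term, giving $(\lo,\bar{\G}) = (\lo,\W) + (\lo,\bar{\Rho})$. On the other hand, in dimension $4$ the Schouten definition reads $2\bar{\Rho} = \overline{\Ric} - \bar{\J}g$, so contracting with $\lo$ eliminates the $\bar{\J}g$ term and gives $2(\lo,\bar{\Rho}) = (\lo,\overline{\Ric})$. Combining these two, $2(\lo,\bar{\G}) = (\lo,\overline{\Ric}) + 2(\lo,\W)$, which is the second claim.

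For the first identity, the key point is that the decomposition is an identity of tensors on $X$, so covariantly differentiating in the $\partial_0$ direction is legal; and since $\bar{\nabla} g = 0$, no extra terms from the metric factor appear:
\begin{equation*}
   (\bar{\nabla}_0 \bar{R})_{0ij0} = (\bar{\nabla}_0 \overline{W})_{0ij0} + (\bar{\nabla}_0 \bar{\Rho})_{ij} + (\bar{\nabla}_0 \bar{\Rho})_{00}\, h_{ij}.
\end{equation*}
Contracting with $\lo^{ij}$ and using $\tr(\lo)=0$ removes the last term, and the identity $2\bar{\Rho} = \overline{\Ric} - \bar{\J}g$ (again contracted with $\lo$) converts $\lo^{ij}(\bar{\nabla}_0 \bar{\Rho})_{ij}$ into $\tfrac{1}{2}(\lo, \bar{\nabla}_0 \overline{\Ric})$. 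Multiplying through by $2$ gives
\begin{equation*}
   2\lo^{ij}(\bar{\nabla}_0 \bar{R})_{0ij0} = 2 \lo^{ij}(\bar{\nabla}_0 \overline{W})_{0ij0} + (\lo, \bar{\nabla}_0 \overline{\Ric}),
\end{equation*}
which is the first identity.

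There is no real obstacle here — the only thing to be careful about is the sign convention in \eqref{KN}, which determines whether $\bar{\Rho}$ appears with a plus or minus, and the convention that $\bar{\nabla}_0$ acts on the tensor before extracting components so that the metric factor is genuinely parallel. Both identities are purely algebraic consequences of the 4-dimensional Weyl/Schouten decomposition together with $\tr(\lo)=0$.
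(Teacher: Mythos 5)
Your proof is correct and follows essentially the same route as the paper: the ambient Weyl--Schouten (Kulkarni--Nomizu) decomposition evaluated at $(0,i,j,0)$, contracted with the trace-free $\lo$, together with $2\bar{\Rho}=\overline{\Ric}-\bar{\J}g$ in dimension four. The only difference is cosmetic: you covariantly differentiate the tensor identity first (using $\bar{\nabla}g=0$) and then take components, whereas the paper differentiates the component identity in geodesic normal coordinates and checks that the $\bar{\Rho}_{00}L_{ij}$ correction terms cancel, so your version merely avoids that bookkeeping.
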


\begin{proof} By the Kulkarni-Nomizu decomposition $R = - \Rho \owedge g + W$, we have
$$
   \bar{\G}_{ij} = \bar{R}_{0ij0} =  \bar{\Rho}_{ij} + \bar{\Rho}_{00} (h_r)_{ij} + \overline{W}_{0ij0}.
$$
Hence using
$$
\bar{\nabla}_0(\bar{R})_{0ij0} = \partial_0 (\bar{R}_{0ij0})
   - \bar{R}(\partial_0,\bar{\nabla}_0(\partial_i),\partial_j,\partial_0)
   - \bar{R}(\partial_0,\partial_i,\bar{\nabla}_0(\partial_j),\partial_0)
$$
and $\bar{\nabla}_0(\partial_i) = L_i^k \partial_k$ we find
\begin{align*}
    \bar{\nabla}_0(\bar{R})_{0ij0} - \bar{\nabla}_0(\overline{W})_{0ij0} &
   = \partial_0 (\bar{\Rho}_{ij}) + \partial_0 (\bar{\Rho}_{00}) h_{ij} + \bar{\Rho}_{00} h_{ij}' \\
   & - \bar{\Rho}(\bar{\nabla}_0(\partial_i),\partial_j) - \bar{\Rho}(\partial_i,\bar{\nabla}_0(\partial_j))
   - \bar{\Rho}_{00} h(\bar{\nabla}_0(\partial_i),\partial_j) - \bar{\Rho}_{00} h(\partial_i,\bar{\nabla}_0(\partial_j)) \\
   & = \bar{\nabla}_0(\bar{\Rho})_{ij} + \partial_0 (\bar{\Rho}_{00}) h_{ij} + 2 \bar{\Rho}_{00} L_{ij}
   - 2 \bar{\Rho}_{00} L_{ij} \\
   & = \bar{\nabla}_0(\bar{\Rho})_{ij} + \partial_0 (\bar{\Rho}_{00}) h_{ij}
\end{align*}
for $r=0$. Therefore,
$$
   2 \lo^{ij} \bar{\nabla}_0(\bar{R})_{0ij0} = 2 \lo^{ij} \bar{\nabla}_0(\bar{\Rho})_{ij}
  + 2  \lo^{ij} \bar{\nabla}_0(\overline{W})_{0ij0}
   = \lo^{ij} \bar{\nabla}_0(\overline{\Ric})_{ij} + 2  \lo^{ij} \bar{\nabla}_0(\overline{W})_{0ij0}.
$$
This proves the first identity.
The second identity follows from the decomposition $\bar{\G} = \bar{\Rho} + \bar{\Rho}_{00} h + \W$.
\end{proof}

Next, we evaluate the first term of \eqref{B3-g}.

\begin{lem}\label{del-Nabla} In general dimensions, it holds
\begin{equation}\label{del-nabla}
   \delta (\bar{\nabla}_0(\overline{\Ric})_{0}) = \frac{1}{2} \Delta (\overline{\scal}) - n \delta (H \overline{\Ric}_0)
   - \delta ((L \overline{\Ric})_{0}) - \delta \delta (\overline{\Ric}).
\end{equation}
\end{lem}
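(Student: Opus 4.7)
My plan is to adapt almost verbatim the derivation of the Bianchi-type identity \eqref{Bianchi} carried out in the run-up to Lemma \ref{Nabla-2G}, but with the second normal slot replaced by a tangential one. The starting point is the twice-contracted second Bianchi identity
$$
  g^{ab}\bar{\nabla}_a(\overline{\Ric})_{bj} = \tfrac{1}{2}\bar{\nabla}_j(\overline{\scal}),
$$
which I would evaluate for $j$ tangential to $M$. Since at a point of $M$ the ambient metric splits as $g^{ab} = h^{ab} + \partial_0^a \partial_0^b$, I can isolate the normal-normal contribution
$$
  \bar{\nabla}_0(\overline{\Ric})_{0j} = \tfrac{1}{2}\nabla_j(\overline{\scal}) - h^{ab}\bar{\nabla}_a(\overline{\Ric})_{bj},
$$
using that for scalars the ambient and intrinsic tangential derivatives coincide.

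The next step is to express the contracted ambient derivative $h^{ab}\bar{\nabla}_a(\overline{\Ric})_{bj}$ in terms of intrinsic objects. For tangential vector fields the Gauss formula $\bar{\nabla}_X Y = \nabla_X Y - L(X,Y)\partial_0$ gives, by a direct expansion of $\bar{\nabla}_a\overline{\Ric}$ as a $(0,2)$-tensor derivative,
$$
  (\bar{\nabla}_a\overline{\Ric})(\partial_b,\partial_j) = (\nabla_a\overline{\Ric})(\partial_b,\partial_j) + L_{ab}\,\overline{\Ric}_{0j} + L_{aj}\,\overline{\Ric}_{b0},
$$
where on the right $\overline{\Ric}$ stands for its pullback to $M$. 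Tracing with $h^{ab}$ and using $h^{ab}L_{ab}=nH$ and $h^{ab}L_{aj}\overline{\Ric}_{b0} = (L\overline{\Ric})_{j0}$ yields
$$
  h^{ab}\bar{\nabla}_a(\overline{\Ric})_{bj} = \delta(\overline{\Ric})_j + nH\,\overline{\Ric}_{0j} + (L\overline{\Ric})_{j0}.
$$
Substituting this into the previous display produces the pointwise identity
$$
  \bar{\nabla}_0(\overline{\Ric})_{0j} = \tfrac{1}{2}\nabla_j(\overline{\scal}) - \delta(\overline{\Ric})_j - nH\,\overline{\Ric}_{0j} - (L\overline{\Ric})_{j0}.
$$

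To finish I would apply the intrinsic divergence $\nabla^j$ to both sides, reading the LHS as $\delta(\bar{\nabla}_0(\overline{\Ric})_0)$ (the intrinsic divergence of the $1$-form $j\mapsto\bar{\nabla}_0(\overline{\Ric})_{0j}$) and using $\nabla^j\nabla_j(\overline{\scal}) = \Delta(\overline{\scal})$ together with $\nabla^j\delta(\overline{\Ric})_j = \delta\delta(\overline{\Ric})$ and $\nabla^j(L\overline{\Ric})_{j0}=\delta((L\overline{\Ric})_0)$. The assertion \eqref{del-nabla} follows immediately. The only real obstacle is bookkeeping: keeping careful track of which indices are tangential versus normal and of the ambient-versus-intrinsic connection distinction. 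No curvature identity beyond the twice-contracted second Bianchi identity intervenes, so the argument is essentially mechanical.
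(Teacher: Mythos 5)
Your argument is correct and is essentially the proof in the paper: both split the twice-contracted second Bianchi identity $2\delta^g(\overline{\Ric}) = d\,\overline{\scal}$ into tangential and normal parts along $M$, expand the tangentially contracted ambient derivative of $\overline{\Ric}$ via $\bar{\nabla}_{\partial_i}(\partial_j) = \nabla_{\partial_i}(\partial_j) - L_{ij}\partial_0$ to produce the terms $\delta(\overline{\Ric})_j + nH\,\overline{\Ric}_{0j} + (L\overline{\Ric})_{j0}$, and then apply the intrinsic divergence to the resulting identity of $1$-forms on $M$. The only difference is cosmetic ordering of the steps, so nothing needs to be added.
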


\begin{proof} The result follows from the second Bianchi identity. Combining the identity
\begin{equation*}
   \bar{\nabla}_0(\overline{\Ric})(\partial_0,\partial_a)
   = \delta^g (\overline{\Ric})(\partial_a) - h^{ij} \bar{\nabla}_{\partial_i} (\overline{\Ric})(\partial_j,\partial_a)
\end{equation*}
on $M$ with $2 \delta^g (\Ric^g) =d \scal^g$, we obtain
\begin{align*}
    \bar{\nabla}_0(\overline{\Ric})(\partial_0,\partial_a) & = \frac{1}{2} \langle d \overline{\scal},\partial_a \rangle
    - h^{ij} \partial_i (\overline{\Ric}(\partial_j,\partial_a))
    + h^{ij} \overline{\Ric}(\bar{\nabla}_{\partial_i}(\partial_j),\partial_a)
    + h^{ij} \overline{\Ric}(\partial_j,\bar{\nabla}_{\partial_i}(\partial_a)) \\
    & = \frac{1}{2} \langle d \overline{\scal},\partial_a \rangle - h^{ij} \partial_i (\overline{\Ric}(\partial_j,\partial_a)) \\
    & + h^{ij} \overline{\Ric}(\nabla_{\partial_i}(\partial_j) - L_{ij} \partial_0,\partial_a)
    + h^{ij} \overline{\Ric}(\partial_j,\nabla_{\partial_i}(\partial_a) - L_{ia} \partial_0) \\
    & =  \frac{1}{2} \langle d \overline{\scal},\partial_a \rangle - \delta^h (\overline{\Ric})(\partial_a)
    - n H \overline{\Ric}(\partial_0,\partial_a) - h^{ij} L_{ia} \overline{\Ric}(\partial_j,\partial_0).
\end{align*}
Now we apply $\delta = \delta^h$ to this identity of $1$-forms on $M$. We obtain
$$
      \delta (\bar{\nabla}_0(\overline{\Ric})_{0}) = \frac{1}{2} \Delta (\overline{\scal})
      - \delta \delta (\overline{\Ric})
      -n \delta (H \overline{\Ric}_0) - \delta ((L \overline{\Ric})_{0}).
$$
The proof is complete.
\end{proof}

In order to apply Lemma \ref{del-Nabla}, we combine it with the following formula for the last term
on the right-hand side of \eqref{del-nabla}.

\begin{lem}\label{deldel} Let $n=3$. Then
$$
  \delta \delta (\overline{\Ric}) = 2 \Delta (\J) + \Delta (\bar{\J}) - \Delta (H^2) - 2 \delta \delta (H \lo)
 + 2 \delta \delta (\lo^2) - \frac{1}{2} \Delta (|\lo|^2) + 2 \delta \delta (\W).
$$
\end{lem}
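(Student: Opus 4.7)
The plan is to express $\iota^*\overline{\Ric}$ on $M$ as a short sum of tensors whose double divergence can be read off by inspection. Only two structural inputs are required. First, the Schouten decomposition on the ambient four-manifold reads $\overline{\Ric} = 2\bar{\Rho} + \bar{\J}\,g$, and pulling back by $\iota$ gives $\iota^*\overline{\Ric} = 2\,\iota^*\bar{\Rho} + \bar{\J}\,h$. Second, the Fialkov tensor identity \eqref{Fial} trades $\iota^*\bar{\Rho}$ for intrinsic and extrinsic data, namely
$$
\iota^*\bar{\Rho} = \Rho - H\lo - \tfrac{1}{2}H^2 h + \lo^2 - \tfrac{1}{4}|\lo|^2 h + \W.
$$

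Combining the two displayed identities I would collect the result as
$$
\iota^*\overline{\Ric} = 2\Rho + 2\lo^2 + 2\W - 2H\lo + \bigl(\bar{\J} - H^2 - \tfrac{1}{2}|\lo|^2\bigr) h,
$$
and then apply $\delta\delta$ termwise. Three observations close the proof. The scalar-multiple-of-$h$ block gives $\Delta(\bar{\J}) - \Delta(H^2) - \tfrac{1}{2}\Delta(|\lo|^2)$ via $\delta\delta(fh) = \Delta(f)$. The $2\Rho$ block gives $2\Delta(\J)$, using the contracted Bianchi identity in dimension three: from $\Ric = \Rho + \J h$ and $2\delta(\Ric) = d\scal = 4\,d\J$ one obtains $\delta(\Rho) = d\J$, hence $\delta\delta(\Rho) = \Delta(\J)$. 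The remaining three summands $-2H\lo$, $2\lo^2$, $2\W$ contribute exactly the unevaluated terms $-2\delta\delta(H\lo)$, $2\delta\delta(\lo^2)$, $2\delta\delta(\W)$ on the right-hand side of the claimed identity.

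I do not expect a substantive obstacle: once the Fialkov identity is invoked, everything reduces to term-by-term linearity of $\delta\delta$ plus the one-line contracted Bianchi computation in dimension three. The only delicate step is bookkeeping with signs and with the pure-trace coefficient of $h$, since a slip in combining the $-\tfrac{1}{2}H^2$, $-\tfrac{1}{4}|\lo|^2$ and $+\bar{\J}$ contributions to that coefficient would spoil the factors $-\Delta(H^2)$ and $-\tfrac{1}{2}\Delta(|\lo|^2)$ in the target formula.
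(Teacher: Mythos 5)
Your proposal is correct and follows essentially the same route as the paper: decompose $\overline{\Ric} = 2\bar{\Rho} + \bar{\J}\,g$, substitute the Fialkov identity \eqref{Fial} for $\iota^*\bar{\Rho}$, and apply $\delta\delta$ termwise using $\delta\delta(fh)=\Delta(f)$ together with $\delta(\Rho)=d\J$. The bookkeeping of the pure-trace coefficient in your displayed formula is also consistent with the paper's computation.
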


\begin{proof} First, we note that
$$
   \delta \delta (\overline{\Ric}) = 2 \delta \delta (\bar{\Rho}) + \delta \delta (\bar{\J} h)
  =  2 \delta \delta (\bar{\Rho}) + \Delta (\bar{\J}).
$$
Now we utilize the identity \eqref{Fial}. It follows that
$$
   2 \delta \delta (\bar{\Rho}) = 2 \delta \delta (\Rho) - 2 \delta \delta (H \lo) - \Delta (H^2)
  + 2 \delta \delta (\lo^2) - \frac{1}{2} \Delta (|\lo|^2) + 2 \delta \delta (\W).
$$
Combining these results with $\delta (\Rho) = d\J$ proves the assertion.
\end{proof}

Now, combining Lemma \ref{del-Nabla} and Lemma \ref{deldel} with the Gauss identity
$$
    \bar{\J} - \J = \bar{\Rho}_{00} + 1/4 |\lo|^2 - 3/2 H^2
$$
gives
\begin{align}\label{surprise}
    \delta (\bar{\nabla}_0(\overline{\Ric})_{0}) & = 2 \Delta (\bar{\J} - \J) - 3 \delta (H \overline{\Ric}_0)
    - \delta ((L \overline{\Ric})_{0}) \notag \\
   & + \Delta(H^2) + 2 \delta \delta (H \lo) - 2 \delta \delta (\lo^2) + \frac{1}{2} \Delta (|\lo|^2)
   - 2 \delta \delta (\W) \notag \\
    & = 2 \Delta (\bar{\Rho}_{00}) + \Delta (|\lo|^2) - 2 \Delta(H^2) - 3 \delta (H \overline{\Ric}_0)
   - \delta ((L \overline{\Ric})_{0}) \notag \\
   & + 2 \delta \delta (H \lo) - 2 \delta \delta (\lo^2) - 2 \delta \delta (\W).
\end{align}
This result shows that \eqref{B3-g} equals
\begin{align}\label{R1}
  & - \Delta (|\lo|^2)  - 2 \delta \delta (H \lo) +  2 \delta \delta (\lo^2) + 2 \Delta(H^2) \notag \\
  & + \delta (H \overline{\Ric}_0) + \delta ((L \overline{\Ric})_{0}) + 6 (dH,\overline{\Ric}_0) + 2 \delta \delta (\W).
\end{align}

\begin{rem}\label{surp2} The identity \eqref{surprise} shows that
$$
   \Delta (|\lo|^2) - 2 \delta \delta (\lo^2) + 2 \delta \delta (H \lo) - 2 \Delta(H^2) = 0
$$
for a flat background. This relation also is a consequence of the difference formula \eqref{basic-div} and
the identity
\begin{equation}\label{dd-flat}
   \delta \delta (H \lo) = (\lo,\Hess(H)) + 4 |dH|^2 + 2 H \Delta(H)
\end{equation}
which is a consequence of Lemma \ref{deldel2} and the Codazzi-Mainardi relation $\delta (\lo) = 2dH$ for a flat background.
More generally, if $g$ is Einstein, i.e., if $\overline{\Ric} = \lambda g$, then $\overline{\Ric}_0 = 0$
and the identity \eqref{surprise} implies
$$
    \Delta (|\lo|^2) - 2 \delta \delta (\lo^2) + 2 \delta \delta (H \lo) - 2 \Delta(H^2)  - 2 \delta \delta (\W) = 0.
$$
By combination with the Codazzi-Mainardi relation $\delta(\lo) = 2 dH$, Lemma \ref{diff-key-g} and Lemma \ref{deldel2},
we conclude the interesting identity
$$
   -2 \lo^{ij} \nabla^k \overline{W}_{kij0}  + |\overline{W}_{0}|^2 - 2 \delta \delta (\W) = 0.
$$
\end{rem}

\begin{lem}\label{deldel2} In general dimensions, it holds
$$
   \delta \delta (H \lo) = (\lo,\Hess(H)) + 2 (dH,\delta(\lo)) + H \delta \delta (\lo).
$$
\end{lem}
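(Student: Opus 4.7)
The plan is to prove this lemma by a direct twofold application of the Leibniz rule, since $\delta\delta$ is a second-order differential operator acting on a product $H \lo$ of a scalar and a symmetric $2$-tensor. No curvature or hypersurface identities are needed; the identity is purely formal.

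First I would compute $\delta(H\lo)$ as a $1$-form. Since $\delta(b)_i = \nabla^j b_{ji}$ for a symmetric $2$-tensor $b$, the product rule immediately yields
\begin{equation*}
   \delta(H\lo)_i = (\nabla^j H)\, \lo_{ji} + H\, \nabla^j \lo_{ji}
   = \lo_{ij}\, (dH)^j + H\, \delta(\lo)_i.
\end{equation*}
Here I used the symmetry $\lo_{ij} = \lo_{ji}$ to identify the contraction.

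Next, I would apply $\delta$ once more, i.e.\ take $\nabla^i$ of the above $1$-form. Differentiating the first summand gives $(\nabla^i \lo_{ij})(dH)^j + \lo_{ij} \nabla^i (dH)^j = (\delta(\lo), dH) + (\lo, \Hess(H))$, while differentiating the second summand gives $(dH, \delta(\lo)) + H\, \delta\delta(\lo)$. Collecting the two copies of $(dH, \delta(\lo))$ into a single term with coefficient $2$ gives exactly
\begin{equation*}
   \delta\delta(H\lo) = (\lo, \Hess(H)) + 2(dH, \delta(\lo)) + H\, \delta\delta(\lo),
\end{equation*}
as claimed.

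The only small subtlety is keeping track of index symmetries when the derivative $\nabla^i$ acts on the Hessian piece, but because $\lo$ is symmetric this causes no ambiguity. I do not expect any serious obstacle: the statement is essentially a Leibniz identity valid in any dimension and independent of the geometry of the ambient space, which is why the lemma is stated for general dimensions.
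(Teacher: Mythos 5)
Your proof is correct, and it is exactly the routine double application of the Leibniz rule that the paper has in mind (the paper's proof simply states "The identity is obvious"). Nothing more is needed.
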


\begin{proof}
The identity is obvious.
\end{proof}

Now we combine formula \eqref{R1} with \eqref{B3-u}--\eqref{B3-gc}. Note that the term
$\delta ((L \overline{\Ric})_{0})$ in \eqref{R1} sums up with the fifth term in \eqref{B3-u}
to $\delta (H \overline{\Ric}_0)$ and
that the term  $-\Delta (|\lo|^2)$ in \eqref{R1} cancels with the term $\Delta (|\lo|^2)$ in \eqref{B3-gc}.
We also use Lemma \ref{deldel2}.

By Lemmas \ref{HL1}--\ref{deldel2}, the formula in Lemma \ref{B3-inter2} turns into the sum of
\begin{equation}\label{F1}
   2 \Delta(H^2) + 2 \delta (H \overline{\Ric}_0) \stackrel{!}{=} 2 \delta ( H \delta (\lo)) =
   2 H \delta \delta (\lo) + 2 (dH,\delta(\lo))
\end{equation}
(by Codazzi-Mainardi $\overline{\Ric}_0 = \delta(\lo) - 2dH$),
$$
   6 (dH,\overline{\Ric}_0),
$$
\begin{align}\label{F2}
   & - 2 \delta \delta (H \lo) + 2 \delta\delta (\lo^2) + 2 (\lo, \nabla (\overline{\Ric}_0))
   + 2(\delta(\lo),\overline{\Ric}_0) -3 |\lo|^2 \overline{\Ric}_{00} - (\lo^2,\overline{\Ric}) \notag \\
   & \stackrel{!}{=}
   -2 (\lo,\Hess(H)) - 4 (dH,\delta(\lo)) + 2(\delta(\lo),\overline{\Ric}_0) - 2H \delta \delta (\lo)
   + 2 \delta\delta (\lo^2) \notag \\
   & + 2 (\lo, \nabla (\overline{\Ric}_0)) -3 |\lo|^2 \overline{\Ric}_{00} - (\lo^2,\overline{\Ric})
\end{align}
(by Lemma  \ref{deldel2}),
\begin{align*}
    8 (\lo^2,\bar{\G}) + 2 |\lo|^2 \bar{\J} & = 8 (\lo^2,\bar{\Rho}) + 8  |\lo|^2 \bar{\Rho}_{00}
   + 2 |\lo|^2 \bar{\J} + 8 (\lo^2,\W) & \mbox{(by $\bar{\G} = \bar{\Rho} + \bar{\Rho}_{00} h + \W$)} \\
    & = 4 (\lo^2,\overline{\Ric}) - 2|\lo|^2  \bar{\J} +  8 |\lo|^2\bar{\Rho}_{00} + 8 (\lo^2,\W) \\
    & \stackrel{!}{=} 4 (\lo^2,\overline{\Ric}) - 6 |\lo|^2  \bar{\J} + 4 |\lo|^2 \overline{\Ric}_{00} + 8 (\lo^2,\W),
\end{align*}
\begin{equation}\label{W-terms}
   2 (\bar{\Rho},\W) + 2 |\W|^2 + 2 \delta \delta (\W) - 2 \lo^{ij} \bar{\nabla}_0(\overline{W})_{0ij0}
   -  2H (\lo,\W)
\end{equation}
and
\begin{align*}
    & 6 (\lo,\Hess(H)) +  6 H \tr(\lo^3) + |\lo|^4 + 12 |dH|^2.
\end{align*}

Note that the contributions $2H \delta \delta (\lo)$ in \eqref{F1} and \eqref{F2} cancel.

By Codazzi-Mainardi, we find
$$
   (\delta(\lo),\overline{\Ric}_0) - (\delta(\lo),dH) = |\delta(\lo)|^2 - 3 (\delta(\lo),dH).
$$
Hence
$$
   2(dH,\delta(\lo)) - 4 (dH,\delta(\lo)) + 6 (dH,\overline{\Ric}_0) + 2 (\delta(\lo),\overline{\Ric}_0)
   \stackrel{!}{=} 2 |\delta(\lo)|^2 - 12 |dH|^2.
$$

Thus, we have proved

\begin{prop}\label{B3-main} $12 \B_3$ equals the sum of
\begin{align}\label{B3F1}
    2 \delta \delta (\lo^2) + 2 |\delta(\lo)|^2,
\end{align}
\begin{equation}\label{B3F2}
    2 (\lo, \nabla (\overline{\Ric}_0))  +|\lo|^2 \overline{\Ric}_{00} + 3 (\lo^2,\overline{\Ric}) - 6 |\lo|^2  \bar{\J},
\end{equation}
the Weyl-curvature terms
\begin{equation}\label{W-terms-F}
     2 (\bar{\Rho},\W) + 2 |\W|^2 + 2 \delta \delta (\W) - 2 \lo^{ij} \bar{\nabla}_0(\overline{W})_{0ij0}
    -  2H (\lo,\W)  + 8 (\lo^2,\W)
\end{equation}
and
\begin{equation}\label{B3F3}
   4 (\lo,\Hess(H)) +  6 H \tr(\lo^3) + |\lo|^4.
\end{equation}
\end{prop}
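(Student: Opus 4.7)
The plan is to regroup the five-block decomposition of $12 \B_3$ from Lemma \ref{B3-inter2} by systematically applying the auxiliary identities Lemmas \ref{HL1}, \ref{del-Nabla}, \ref{deldel}, and \ref{deldel2}, then doing careful bookkeeping. The strategy is to isolate the Weyl-tensor contributions so that the remaining terms can be expressed entirely through $\overline{\Ric}$, $\bar{\J}$, $\lo$, $H$, and their covariant derivatives on $M$.

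First, I would apply Lemma \ref{HL1} to the block \eqref{B3-u}: the combination $(\lo,\bar{\nabla}_0(\overline{\Ric})) - 2\lo^{ij}\bar{\nabla}_0(\bar{R})_{0ij0}$ collapses, leaving behind the Weyl-curvature contribution $-2\lo^{ij}\bar{\nabla}_0(\overline{W})_{0ij0}$, and the decomposition $\bar{\G} = \bar{\Rho} + \bar{\Rho}_{00} h + \W$ in \eqref{B3-gf} peels off the Weyl pieces $-2H(\lo,\W) + 8(\lo^2,\W)$, converting the remainder into terms involving $\overline{\Ric}$ and $\bar{\J}$.

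Next, the $-\delta(\bar{\nabla}_0(\overline{\Ric})_0)$ in \eqref{B3-g} is handled via Lemma \ref{del-Nabla}; feeding Lemma \ref{deldel} into the $\delta\delta(\overline{\Ric})$ term and using the Gauss identity $\bar{\J} - \J = \bar{\Rho}_{00} + \tfrac{1}{4}|\lo|^2 - \tfrac{3}{2}H^2$ yields the identity \eqref{surprise}, which rewrites \eqref{B3-g} as the sum \eqref{R1}. This transforms the normal-derivative term into a combination of $\Delta(|\lo|^2)$, $\delta\delta(H\lo)$, $\delta\delta(\lo^2)$, $\delta\delta(\W)$, $\Delta(H^2)$ and boundary curvature terms.

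The final and most delicate step is the merging of \eqref{R1} with the transformed blocks \eqref{B3-u}, \eqref{B3-gf}, \eqref{Weyl}, \eqref{B3-gc}, where the following cancellations must all occur: the $-\Delta(|\lo|^2)$ in \eqref{R1} kills $\Delta(|\lo|^2)$ in \eqref{B3-gc}; the term $\delta((L\overline{\Ric})_0)$ from \eqref{R1} combines with $-\delta((\lo\overline{\Ric})_0)$ from \eqref{B3-u} to give $\delta(H\overline{\Ric}_0)$; Codazzi--Mainardi $\overline{\Ric}_0 = \delta(\lo) - 2dH$ converts $2\Delta(H^2) + 2\delta(H\overline{\Ric}_0)$ into $2H\delta\delta(\lo) + 2(dH,\delta(\lo))$; Lemma \ref{deldel2} expands $-2\delta\delta(H\lo) = -2(\lo,\Hess(H)) - 4(dH,\delta(\lo)) - 2H\delta\delta(\lo)$; the two $2H\delta\delta(\lo)$ contributions cancel; and finally a second use of Codazzi--Mainardi in the form $(\delta(\lo),\overline{\Ric}_0) = |\delta(\lo)|^2 - 2(dH,\delta(\lo))$ consolidates the $(dH,\overline{\Ric}_0)$ and $(dH,\delta(\lo))$ terms into $2|\delta(\lo)|^2 - 12|dH|^2$, whose $-12|dH|^2$ annihilates the $+12|dH|^2$ in \eqref{B3-gc}. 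The surviving $-2(\lo,\Hess(H))$ combines with $+6(\lo,\Hess(H))$ to leave $4(\lo,\Hess(H))$; what remains regroups into the four advertised blocks \eqref{B3F1}--\eqref{B3F3}.

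The main obstacle is purely combinatorial: tracking every coefficient through a chain of six substitutions without losing or doubling terms. In particular, one must consistently treat the two Codazzi--Mainardi substitutions (one replacing $\overline{\Ric}_0$ by $\delta(\lo) - 2dH$, the other replacing $\delta(\lo)$ by $2dH + \overline{\Ric}_0$) by committing to a single direction at each step, and one must segregate Weyl contributions from the start so that the Weyl block \eqref{W-terms-F} accumulates cleanly without leaking into \eqref{B3F2}.
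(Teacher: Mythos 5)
Your proposal is correct and follows essentially the same route as the paper: starting from the intermediate decomposition in Lemma \ref{B3-inter2}, applying Lemma \ref{HL1} to strip off the Weyl pieces, converting the normal-derivative term via Lemmas \ref{del-Nabla} and \ref{deldel} together with the Gauss identity (the paper's identity \eqref{surprise} and the resulting expression \eqref{R1}), and then carrying out exactly the cancellations you list — the $L$ vs.\ $\lo$ merger giving $\delta(H\overline{\Ric}_0)$, the cancellation of the two $2H\delta\delta(\lo)$ terms after Lemma \ref{deldel2}, and the Codazzi--Mainardi consolidation yielding $2|\delta(\lo)|^2 - 12|dH|^2$ against the flat block. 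The bookkeeping you describe matches the paper's computation term by term, so there is nothing to correct.
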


Note that there is no Laplace term in that formula.

\subsection{Proof of the main result. Equivalences}\label{equiv}


Proposition \ref{B3-main} has the disadvantage that the conformal invariance of $\B_3$ is not obvious. Therefore, 
it is natural to reformulate the results in a way which makes the conformal invariance transparent. For this purpose, 
we relate Proposition \ref{B3-main} to the formula
\begin{align}\label{B3-GW}
   12 \B_3 & = 6 \LOP ((\lo^2)_\circ)  + 2 |\lo|^4  + \star  \notag \\
   & = 6 \delta \delta (\lo^2) - 2 \Delta (|\lo|^2) + 6 (\lo^2,\Rho) - 2 |\lo|^2 \J + 2 |\lo|^4 + \star
\end{align}
in \cite[Proposition 1.1]{GGHW}, where\footnote{Here we refer to \url{arXiv:1508.01838v1}. This result differs from the
version in the published paper.}
\begin{equation}\label{GGHW-terms}
    \star \st  2 \LOP (\W) + 4 |\W|^2 + 2 |\overline{W}_{0}|^2 - 2 (\lo,B) + 14 (\lo^2,\W) - 2 \lo^{ab} \lo^{cd} \overline{W}_{cabd}.
\end{equation}
Here $B$ is a certain conformally invariant symmetric bilinear form of weight $-1$ which will be defined in 
\eqref{Bach-def}. Here we took into account that in \cite{GGHW} the signs of the components of the curvature 
tensor and the Weyl tensor are opposite to ours. All terms in \eqref{GGHW-terms} are conformally invariant. The 
difference of both formulas is
\begin{align}\label{diff-g}
   &  2 ( \Delta (|\lo|^2) - 2 \delta \delta (\lo^2)) + 2 |\delta(\lo)|^2 \notag \\
   &  + 4 (\lo,\Hess(H)) +  6 H \tr(\lo^3) - |\lo|^4 \notag \\
   & + 2 (\lo, \nabla (\overline{\Ric}_0)) + |\lo|^2 \overline{\Ric}_{00}
   + 3 (\lo^2,\overline{\Ric})  - 6 |\lo|^2  \bar{\J} - 6 (\lo^2,\Rho) + 2 |\lo|^2 \J \notag \\
   &  + 2 (\bar{\Rho},\W) + 2 |\W|^2 + 2 \delta \delta (\W) - 2 \lo^{ij} \bar{\nabla}_0(\overline{W})_{0ij0}
    -  2H (\lo,\W)  + 8 (\lo^2,\W) - \star.
\end{align}

\begin{lem}\label{van-equiv} The sum \eqref{diff-g} vanishes.
\end{lem}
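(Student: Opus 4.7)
The plan is to substitute all the available identities into \eqref{diff-g} and verify that everything cancels; at no point will we need anything new beyond Lemma \ref{diff-key-g}, the Fialkov identity \eqref{Fial}, the Gauss identity for $\bar\J-\J$, Corollary \ref{trace-id}, and the Bach-relation (Lemma \ref{Bach-relation}).

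First, I would apply Lemma \ref{diff-key-g} to rewrite the top line of \eqref{diff-g}. Using $\overline{\Ric} = 2\bar\Rho + \bar\J h$ (so that on $M$ one has $(\overline{\Ric}_0)_i = 2(\bar\Rho_0)_i$ since the trace part vanishes on mixed slots), the Hessian and $\nabla(\bar\Rho_0)$ terms produced by Lemma \ref{diff-key-g} cancel exactly against the $4(\lo,\Hess(H))$ in line two of \eqref{diff-g} and against $2(\lo,\nabla(\overline{\Ric}_0))$ in line three. Likewise the $-2|\delta(\lo)|^2$ produced combines with the $+2|\delta(\lo)|^2$ sitting in the first line. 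After this bookkeeping the only remaining pieces are: (a) two curvature-derivative terms $-4\lo^{ij}\nabla^k\overline{W}_{kij0}$ and $-2\lo^{ij}\bar{\nabla}_0(\overline{W})_{0ij0}$; (b) the polynomial curvature contractions in lines three and four; (c) the counter-terms inside $\star$.

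Next I would reduce the scalar contractions. Inserting $\overline{\Ric} = 2\bar\Rho + \bar\J h$ into $|\lo|^2\overline{\Ric}_{00} + 3(\lo^2,\overline{\Ric})$ and using the Gauss relation $\bar\J - \J = \bar\Rho_{00} + \tfrac14|\lo|^2 - \tfrac32 H^2$ reduces those to $6(\lo^2,\bar\Rho-\Rho) - \tfrac12|\lo|^4 + 3H^2|\lo|^2$. The Fialkov identity \eqref{Fial} gives $\bar\Rho - \Rho = \lo^2 - \tfrac14|\lo|^2 h + \W - H\lo - \tfrac12 H^2 h$, and then Corollary \ref{trace-id} ($2\tr(\lo^4) = |\lo|^4$) collapses $6(\lo^2,\bar\Rho-\Rho)$ to $\tfrac32|\lo|^4 + 6(\lo^2,\W) - 6H\tr(\lo^3) - 3H^2|\lo|^2$. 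Combining with the remaining $6H\tr(\lo^3) - |\lo|^4$ from line two of \eqref{diff-g}, all $H^2|\lo|^2$, $|\lo|^4$, and $H\tr(\lo^3)$ contributions disappear, leaving only $6(\lo^2,\W)$ alongside the Weyl-curvature terms.

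Finally I would handle the Weyl-curvature debris. Expanding $\LOP(\W) = \delta\delta(\W) + (\Rho,\W)$ inside $\star$ cancels the $\delta\delta(\W)$ of line four against the $-2\delta\delta(\W)$ from $\star$, and the Fialkov-based expansion $(\bar\Rho-\Rho,\W) = (\lo^2,\W) + |\W|^2 - H(\lo,\W)$ (using trace-freeness $h^{ij}\W_{ij}=0$) cleans up the $|\W|^2$ and $H(\lo,\W)$ contributions. What remains is a linear combination of $\lo^{ij}\nabla^k\overline{W}_{kij0}$, $\lo^{ij}\bar\nabla_0(\overline{W})_{0ij0}$, $H(\lo,\W)$, $(\lo^2,\W)$, $(\lo,B)$, and $\lo^{ij}\lo^{kl}\overline{W}_{kijl}$, which is precisely the combination made to vanish by Lemma \ref{Bach-relation} (the definition of the hypersurface Bach tensor $B$). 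The only real subtlety is bookkeeping, in particular the sign and index conventions relating $\W_{ij}=\overline{W}_{0ij0}$ to the Kulkarni–Nomizu decomposition and the correct form of the Bach relation; I expect that once these are aligned, the cancellation is automatic.
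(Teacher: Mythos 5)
Your proposal is correct and follows essentially the same route as the paper: Lemma \ref{diff-key-g} for the divergence terms, the Fialkov identity \eqref{Fial} with the Gauss identity and Corollary \ref{trace-id} for the polynomial terms (the paper packages this step as Lemma \ref{FH}), and Lemma \ref{Bach-relation} for the Weyl terms (the paper first reduces the claim to the identity \eqref{final} before invoking it). The cancellations you describe do check out, the final leftover being $-4\lo^{ij}\nabla^k\overline{W}_{kij0}+2(\lo^2,\W)-4H(\lo,\W)-2\lo^{ij}\bar{\nabla}_0(\overline{W})_{0ij0}+2(\lo,B)+2\lo^{ij}\lo^{kl}\overline{W}_{kijl}$, which vanishes by Lemma \ref{Bach-relation} together with the antisymmetry of $\overline{W}$ in its first two slots.
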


In other words, Proposition \ref{B3-main} is equivalent to \cite[Proposition 1.1]{GGHW}. The proof of this result will
also establish the equivalence to Theorem \ref{main1}.

\begin{rem}\label{equiv-van}
Lemma \ref{van-equiv} holds for a flat background metric.  In this case  $\star = 0$. In fact, the identity \eqref{Fial} implies
\begin{equation}\label{JP}
    6(\lo^2,\Rho) - 2 \J |\lo|^2  =  6 H \tr(\lo^3) - |\lo|^4.
\end{equation}
By $\delta(\lo) = 2 dH$ (Codazzi-Mainardi), the sum \eqref{diff-g} equals
\begin{align*}
   & 2 ( \Delta (|\lo|^2) - 2 \delta \delta (\lo^2)) + 8 |dH|^2 \\
   & + 4 (\lo,\Hess(H)) + 6 H \tr (\lo^3) - |\lo|^4 - 6 H \tr(\lo^3) + |\lo|^4 \\
   & =  2 ( \Delta (|\lo|^2) - 2 \delta \delta (\lo^2))  + 8 |dH|^2 + 4 (\lo,\Hess(H)).
\end{align*}
The identity \eqref{basic-div} shows that this sum vanishes.
\end{rem}

A key role in the argument in Remark \ref{equiv-van} is played by the formula \eqref{basic-div} for the divergence term
$\Delta (|\lo|^2) - 2 \delta \delta (\lo^2)$. Lemma \ref{diff-key-g} extends this result to general backgrounds. We also
need the following curved analog of \eqref{JP}.

\begin{lem}\label{FH} If $n=3$, then it holds
$$
    6(\lo^2,\Rho) - 2 |\lo|^2 \J =  6 H \tr(\lo^3) - |\lo|^4 + 6 (\lo^2,\bar{\Rho})
   - 2 |\lo|^2 \bar{\J} + 2 |\lo|^2 \bar{\Rho}_{00} -  6(\lo^2,\W).
$$
\end{lem}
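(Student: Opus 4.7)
The plan is to derive both sides of the identity from two classical relations available in the paper: the Fialkov identity \eqref{Fial} (which trades $\Rho$ for $\bar{\Rho}$ plus extrinsic data) and the Gauss scalar-curvature identity (which trades $\J$ for $\bar{\J}$ plus extrinsic data). The key point specific to $n=3$ is that Corollary \ref{trace-id} gives $\tr(\lo^4)=\tfrac{1}{2}|\lo|^4$, which is what makes the Fialkov contraction collapse cleanly.

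First I would solve \eqref{Fial} for $\Rho$, obtaining
$$
\Rho \;=\; \bar{\Rho} + H\lo + \tfrac{1}{2}H^2 h - \lo^2 + \tfrac{1}{4}|\lo|^2 h - \W .
$$
Contracting against $\lo^2$ and using the trace-free identities $(\lo^2,h)=|\lo|^2$ and $(\lo^2,\lo)=\tr(\lo^3)$ gives
$$
(\lo^2,\Rho) \;=\; (\lo^2,\bar{\Rho}) + H\tr(\lo^3) + \tfrac{1}{2}H^2|\lo|^2 - \tr(\lo^4) + \tfrac{1}{4}|\lo|^4 - (\lo^2,\W).
$$
Invoking Corollary \ref{trace-id} to replace $\tr(\lo^4)$ by $\tfrac{1}{2}|\lo|^4$ and multiplying by $6$ yields
$$
6(\lo^2,\Rho) \;=\; 6(\lo^2,\bar{\Rho}) + 6H\tr(\lo^3) + 3H^2|\lo|^2 - \tfrac{3}{2}|\lo|^4 - 6(\lo^2,\W).
$$

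Next, I would use the Gauss identity $\bar{\J}-\J = \bar{\Rho}_{00} + \tfrac{1}{4}|\lo|^2 - \tfrac{3}{2}H^2$ (used already in the passage preceding \eqref{surprise}) to write
$$
2|\lo|^2\J \;=\; 2|\lo|^2\bar{\J} - 2|\lo|^2\bar{\Rho}_{00} - \tfrac{1}{2}|\lo|^4 + 3H^2|\lo|^2.
$$
Subtracting these two equations, the terms $3H^2|\lo|^2$ cancel and the $|\lo|^4$ contributions combine as $-\tfrac{3}{2}+\tfrac{1}{2} = -1$, producing exactly the claimed right-hand side
$$
6H\tr(\lo^3) - |\lo|^4 + 6(\lo^2,\bar{\Rho}) - 2|\lo|^2\bar{\J} + 2|\lo|^2\bar{\Rho}_{00} - 6(\lo^2,\W).
$$

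There is no real obstacle here; the only subtle point is the use of $\tr(\lo^4)=\tfrac{1}{2}|\lo|^4$, which is where the dimension-three hypothesis enters decisively. Without this, the $|\lo|^4$ coefficients do not line up, and the identity would instead carry an additional $\tr(\lo^4)$ term.
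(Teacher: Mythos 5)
Your proposal is correct and follows essentially the same route as the paper: contract the Fialkov identity \eqref{Fial} with $\lo^2$ to trade $(\lo^2,\Rho)$ for $(\lo^2,\bar{\Rho})$ plus extrinsic terms, use its trace (the Gauss identity $\bar{\J}-\J=\bar{\Rho}_{00}+\tfrac14|\lo|^2-\tfrac32 H^2$) for the $\J$-term, and invoke $\tr(\lo^4)=\tfrac12|\lo|^4$ from Corollary \ref{trace-id}. The only difference is that you make the appeal to Corollary \ref{trace-id} explicit, which the paper leaves implicit in ``these relations imply the assertion.''
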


\begin{proof} The identity \eqref{Fial} yields
$$
   \iota^* \bar{\Rho} - \Rho =  \lo^2 - \frac{1}{4} |\lo|^2 h - H \lo - \frac{1}{2} H^2 h + \W.
$$
Taking the trace yields the Gauss identity
$$
    \bar{\J} - \bar{\Rho}_{00} - \J = |\lo|^2 -\frac{3}{4} |\lo|^2 - \frac{3}{2} H^2
   = \frac{1}{4}  |\lo|^2 - \frac{3}{2} H^2.
$$
These relations imply the assertion.
\end{proof}

Now, by Lemma \ref{FH}, \eqref{diff-g} simplifies to
\begin{align}\label{diff-h}
   &  2 ( \Delta (|\lo|^2) - 2 \delta \delta (\lo^2)) + 2 |\delta(\lo)|^2 + 4 (\lo,\Hess(H)) \notag \\
   & + 2 (\lo, \nabla (\overline{\Ric}_0)) + |\lo|^2 \overline{\Ric}_{00}
   + 3 (\lo^2,\overline{\Ric}) - 4 |\lo|^2 \bar{\J} -6 (\lo^2,\bar{\Rho}) - 2 |\lo|^2 \bar{\Rho}_{00} \notag \\
   & + 2 (\bar{\Rho},\W) + 2 |\W|^2 + 2 \delta \delta (\W) - 2 \lo^{ij} \bar{\nabla}_0(\overline{W})_{0ij0}
    -  2H (\lo,\W)  + 14  (\lo^2,\W) - \star
\end{align}
But
$$
   |\lo|^2 \overline{\Ric}_{00} + 3 (\lo^2,\overline{\Ric})  - 4 |\lo|^2 \bar{\J} - 6 (\lo^2,\bar{\Rho})
  - 2 |\lo|^2\bar{\Rho}_{00} = 0,
$$
i.e., the second last line of \eqref{diff-h} reduces to $2 (\lo, \nabla (\overline{\Ric}_0))$. Therefore, \eqref{diff-g}
further simplifies to
\begin{align*}
   &  2 ( \Delta (|\lo|^2) - 2 \delta \delta (\lo^2)) + 2 |\delta(\lo)|^2
   + 4 (\lo,\Hess(H))  + 2 (\lo, \nabla (\overline{\Ric}_0)) \\
   & + 2 (\bar{\Rho},\W) + 2 |\W|^2 + 2 \delta \delta (\W) - 2 \lo^{ij} \bar{\nabla}_0(\overline{W})_{0ij0}
    -  2H (\lo,\W)  + 14 (\lo^2,\W) - \star.
\end{align*}
Now, by Lemma \ref{diff-key-g}, this sum equals
\begin{align}\label{a-full}
   & 4 \lo^{ij} \nabla^k \overline{W}_{ikj0}  + 2 |\overline{W}_{0}|^2 \notag \\
   & + 2 (\bar{\Rho},\W)  + 2 |\W|^2 + 2 \delta \delta (\W) - 2 \lo^{ij} \bar{\nabla}_0(\overline{W})_{0ij0}
    -  2 H (\lo,\W)  + 14 (\lo^2,\W) - \star.
\end{align}
Now we apply the identity
$$
    (\bar{\Rho},\W) = (\Rho,\W) + (\lo^2,\W) - H (\lo,\W) + |\W|^2
$$
(see \eqref{Fial}). Hence  the sum \eqref{a-full} equals
\begin{align*}
    & 2 \delta \delta (\W) + 2 (\Rho,\W) - 2 \lo^{ij} \bar{\nabla}_0(\overline{W})_{0ij0} - 4 H(\lo,\W) \\
    & + 16 (\lo^2,\W)  + 4 |\W|^2 + 2 |\overline{W}_{0}|^2 + 4 \lo^{ij} \nabla^k \overline{W}_{ikj0} - \star.
\end{align*}
Therefore, Lemma \ref{van-equiv} holds true iff
\begin{align}\label{star}
    \star & = 2 \LOP(\W) - 2 \lo^{ij} \bar{\nabla}_0(\overline{W})_{0ij0} + 4 \lo^{ij} \nabla^k \overline{W}_{ikj0} \notag \\
    & - 4 H(\lo,\W) + 16 (\lo^2,\W)  + 4 |\W|^2 + 2 |\overline{W}_{0}|^2.
\end{align}
Equivalently, Lemma \ref{van-equiv} holds true iff
\begin{align}\label{final}
    & -2 (\lo,B) + 14 (\lo^2,\W) - 2 \lo^{ij} \lo^{kl} \overline{W}_{kijl} \notag \\
    & = - 2 \lo^{ij} \bar{\nabla}_0(\overline{W})_{0ij0} + 4 \lo^{ij} \nabla^k \overline{W}_{ikj0} - 4 H(\lo,\W) + 16 (\lo^2,\W)
\end{align}
It remains to prove \eqref{final}. As a preparation, we observe

\begin{lem}\label{LW-T} Let $n=3$. Then
$$
    \lo^{ij} \bar{\nabla}^k \overline{W}_{ikj0} = \lo^{ij} \nabla^k \overline{W}_{ikj0} + (\lo^2,\W)
   - 3 H(\lo,\W) + \lo^{ij} \lo^{kl} \overline{W}_{kijl}.
$$
\end{lem}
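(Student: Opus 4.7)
The plan is to expand $\bar{\nabla}_a \overline{W}_{ikj0}$ for a tangential index $a$ by means of the Gauss--Weingarten formulas, then contract and split $L$ into its trace-free part $\lo$ and its trace $Hh$. All the work is algebraic, so the main obstacle is simply bookkeeping signs and using the Weyl symmetries correctly; I expect no genuinely hard step.

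First, I would use the Gauss formula $\bar{\nabla}_a \partial_i = \nabla_a \partial_i - L_{ai}\partial_0$ and the Weingarten formula $\bar{\nabla}_a \partial_0 = L_a^{\,m}\partial_m$ to compute, on $M$,
\[
   \bar{\nabla}_a \overline{W}_{ikj0}
   = \nabla_a \overline{W}_{ikj0}
     + L_{ai} \overline{W}_{0kj0}
     + L_{ak} \overline{W}_{i0j0}
     + L_{aj} \overline{W}_{ik00}
     - L_a^{\,m} \overline{W}_{ikjm}.
\]
The term $\overline{W}_{ik00}$ vanishes by the antisymmetry of $\overline{W}$ in its last two slots, and using $\overline{W}_{0kj0}=\W_{kj}$ together with $\overline{W}_{i0j0}= -\W_{ij}$ (from the pair symmetries), the identity becomes
\[
   \bar{\nabla}_a \overline{W}_{ikj0}
   = \nabla_a \overline{W}_{ikj0}
     + L_{ai} \W_{kj}
     - L_{ak} \W_{ij}
     - L_a^{\,m} \overline{W}_{ikjm}.
\]

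Next I would raise $a$ by $h^{ak}$ to obtain
\[
   \bar{\nabla}^k \overline{W}_{ikj0}
   = \nabla^k \overline{W}_{ikj0}
     + L_i^{\,k}\W_{kj}
     - (\tr L)\,\W_{ij}
     - L^{km}\overline{W}_{ikjm},
\]
where $\tr L = 3H$ since $n=3$. Contracting with $\lo^{ij}$ and writing $L = \lo + Hh$ in the two $L$--factors, the term $\lo^{ij} L_i^{\,k}\W_{kj}$ splits into $(\lo^2,\W) + H(\lo,\W)$, while the last term splits as
\[
   \lo^{ij} L^{km}\overline{W}_{ikjm}
   = \lo^{ij}\lo^{km}\overline{W}_{ikjm}
     + H\,\lo^{ij} h^{km}\overline{W}_{ikjm}.
\]
To simplify the last summand, I would invoke the vanishing of the ambient Weyl trace $g^{ab}\overline{W}_{iajb}=0$ in an adapted frame to get $h^{km}\overline{W}_{ikjm} = -\overline{W}_{i0j0} = \W_{ij}$, so $H\,\lo^{ij} h^{km}\overline{W}_{ikjm} = H(\lo,\W)$.

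Collecting the four contributions and using the Weyl antisymmetry $\overline{W}_{ikjm} = -\overline{W}_{kijm}$ (so that $-\lo^{ij}\lo^{km}\overline{W}_{ikjm} = \lo^{ij}\lo^{kl}\overline{W}_{kijl}$ after renaming $m \mapsto l$), the $H(\lo,\W)$ pieces combine as $H - 3H - H = -3H$, yielding exactly
\[
   \lo^{ij}\bar{\nabla}^k \overline{W}_{ikj0}
   = \lo^{ij}\nabla^k \overline{W}_{ikj0}
     + (\lo^2,\W) - 3H(\lo,\W) + \lo^{ij}\lo^{kl}\overline{W}_{kijl},
\]
which is the desired identity.
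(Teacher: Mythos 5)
Your proposal is correct and follows essentially the same route as the paper: expand $\bar{\nabla}\overline{W}_{ikj0}$ along $M$ via the Gauss--Weingarten relations $\bar{\nabla}_a\partial_i=\nabla_a\partial_i-L_{ai}\partial_0$, $\bar{\nabla}_a\partial_0=L_a^{\,m}\partial_m$, use the antisymmetries and tracelessness of $\overline{W}$, split $L=\lo+Hh$, and contract with $\lo^{ij}$; all signs and the final bookkeeping $(1-3-1)H(\lo,\W)=-3H(\lo,\W)$ check out. The only cosmetic remark is that $\overline{W}_{i0j0}=-\W_{ij}$ comes from antisymmetry in the first pair of slots rather than from the pair-exchange symmetry, but the identity you use is correct.
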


\begin{proof}
By $\bar{\nabla}_i (\partial_j) = \nabla_i(\partial_j) - L_{ij} \partial_0$
and $\bar{\nabla}_k(\partial_0) = L_k^m \partial_m$, we find
\begin{align*}
    \bar{\nabla}^k \overline{W}_{ikj0} & = \nabla^k \overline{W}_{ikj0} - L^{kl} \overline{W}_{ikjl}
   + L^k_i \overline{W}_{0kj0} + 3 H \overline{W}_{i0j0} + L^k_j \overline{W}_{ik00} \\
   & = \nabla^k \overline{W}_{ikj0} + L^{kl} \overline{W}_{kijl} + L^k_i \overline{W}_{0kj0} - 3 H \overline{W}_{0ij0} \\
   & = \nabla^k \overline{W}_{ikj0} + \lo^{kl} \overline{W}_{kijl} - H \overline{W}_{0ij0} + \lo_i^k \overline{W}_{0kj0}
   + H \overline{W}_{0ij0} - 3 H \overline{W}_{0ij0} \\
   & =  \nabla^k \overline{W}_{ikj0} + \lo^{kl} \overline{W}_{kijl} + \lo_i^k \overline{W}_{0kj0} - 3 H \overline{W}_{0ij0}.
\end{align*}
The assertion follows by contraction with $\lo^{ij}$.
\end{proof}

\begin{lem}\label{Bach-relation} It holds
\begin{equation}\label{Bach-deco}
   (\lo,B) = \lo^{ij} \bar{\nabla}^0 (\widebar{W})_{0ij0} + 2 H (\lo,\W)
   - 2 \lo^{ij} \nabla^k \overline{W}_{jki0} - (\lo^2,\W) - \lo^{ij}\lo^{kl} \overline{W}_{kijl}.
\end{equation}
\end{lem}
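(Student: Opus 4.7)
The plan is to verify the identity by unfolding the definition of the hypersurface Bach tensor $B$ (given in the yet-to-appear equation \eqref{Bach-def} taken from \cite[Lemma 2.1]{GGHW}), contracting with $\lo^{ij}$, and simplifying using the algebraic symmetries of $\overline{W}$ together with Lemma \ref{LW-T}. In fact, the introduction already records the equivalent form
\[
   \lo^{ij}\bigl(\bar{\nabla}^0(\overline{W})_{0ij0}+2\nabla^k\overline{W}_{kij0}+2H\W_{ij}\bigr)
   = (\lo,B)+(\lo^2,\W)+\lo^{ij}\lo^{kl}\overline{W}_{kijl},
\]
so the lemma amounts to a cosmetic rearrangement of signs and indices.

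First, I would use the antisymmetry of $\overline{W}$ in its first pair of indices combined with the symmetry $\lo^{ij}=\lo^{ji}$ to obtain
\[
   \lo^{ij}\,\nabla^k\overline{W}_{kij0}=-\lo^{ij}\,\nabla^k\overline{W}_{ikj0}
   =\lo^{ij}\,\nabla^k\overline{W}_{jki0}\cdot(-1),
\]
which swaps the term $+2\lo^{ij}\nabla^k\overline{W}_{kij0}$ on the right-hand side of the introduction's formula into the term $-2\lo^{ij}\nabla^k\overline{W}_{jki0}$ appearing in \eqref{Bach-deco}. This already produces the displayed identity from the introduction's version, so the residual task is to produce the introduction's formula from the definition of $B$.

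For that, I would expand $B$ as given by \eqref{Bach-def}. Its natural constituents are a normal derivative $\bar{\nabla}^0(\overline{W})_{0ij0}$, a tangential divergence $\bar{\nabla}^k \overline{W}_{ikj0}$, and an algebraic $L$-linear correction in $\W$. Contracting with $\lo^{ij}$ and applying Lemma \ref{LW-T} to the term $\lo^{ij}\bar{\nabla}^k\overline{W}_{ikj0}$ converts it to $\lo^{ij}\nabla^k\overline{W}_{ikj0}+(\lo^2,\W)-3H(\lo,\W)+\lo^{ij}\lo^{kl}\overline{W}_{kijl}$; the quadratic corrections then combine precisely with the algebraic $L$-linear piece in the definition of $B$ to produce the terms $2H(\lo,\W)$, $-(\lo^2,\W)$ and $-\lo^{ij}\lo^{kl}\overline{W}_{kijl}$ on the right of \eqref{Bach-deco}.

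The main obstacle will be purely notational: reconciling the sign conventions of \cite{GGHW} (where, as the authors note, the curvature sign is opposite to theirs) with those of the present paper, and organizing the several Weyl tensor symmetries (antisymmetry in each pair, pair exchange, and the first Bianchi identity) used to place each contracted term in the canonical form on the right-hand side. Once this bookkeeping is set up, the computation is a finite algebraic verification with Lemma \ref{LW-T} doing the only nontrivial analytic work.
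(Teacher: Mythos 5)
Your plan is essentially the paper's own proof: unfold the definition \eqref{Bach-def} of $B$, contract with $\lo^{ij}$, split the ambient divergence $\bar{\nabla}^k(\overline{W})_{0ijk}$ into its normal part $\bar{\nabla}^0(\overline{W})_{0ij0}$ and a tangential part handled by Lemma \ref{LW-T}, and finish with the Weyl symmetries and the symmetry of $\lo$ (your index manipulations are correct, and you rightly note that the introduction's displayed identity must itself be derived from \eqref{Bach-def} rather than assumed). The only difference is that the paper additionally verifies the conformal transformation law $e^{\varphi}\hat{B}=B$, which is not needed for the identity \eqref{Bach-deco} itself.
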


\index{$B$ \quad hypersurface Bach tensor}

\begin{proof} We first restate the definition of $B$ in our conventions:\footnote{We recall that our signs of the 
components of $\overline{W}$ are opposite.}
\begin{align}\label{Bach-def}
    B_{ij} & = \bar{C}_{0(ij)} - H \W_{ij} + \nabla^k \overline{W}_{0(ij)k} \\
    & = \bar{\nabla}^k (\overline{W})_{0(ij)k} - H \W_{ij} + \nabla^k \overline{W}_{0(ij)k}. \notag
\end{align}
Here                  \index{$C$ \quad Cotton tensor}
$$
   (n-3) C_{ijk} \st \nabla^l (W)_{ijkl}
$$
defines the Cotton tensor $C$ on a manifold of dimension $n$. It satisfies the conformal transformation law
$$
   \hat{C}_{ijk} = C_{ijk} + W_{ijk \grad(\varphi)}.
$$
We emphasize that, in the definition \eqref{Bach-def} of $B$, the index $k$ in the first term runs over the 
tangential {\em and} the normal vectors. $(ij)$ denotes symmetrization. We first verify that the symmetric 
tensor $B$ satisfies the conformal transformtion law $e^{\varphi} \hat{B} = B$. For this purpose, we first 
observe that
$$
   e^\varphi \hat{\bar{C}}_{\hat{0}ij} = \hat{\bar{C}}_{0ij} 
  = \bar{C}_{0ij} + \overline{W}_{0ij\grad (\varphi)} =  \bar{C}_{0ij}
  + \overline{W}_{0ij\grad^t (\varphi)} + \W_{ij}  \partial_0(\varphi),
$$
where $\hat{0} = \hat{\partial}_0 = e^{-\varphi} \partial_0$ and 
$\grad^t(\varphi)$ is the tangential component of the gradient. We also recall that
$$
   e^{-2\varphi} \hat{\widebar{W}}_{ijkl} = \widebar{W}_{ijkl}, \quad \hat{\bar{\W}}_{ij} = \bar{\W}_{ij}
   \quad \mbox{and} \quad e^\varphi \hat{H} = H + \partial_0(\varphi).
$$
We calculate
\begin{align*}
    e^{2\varphi} \hat{\nabla}^k \hat{\widebar{W}}_{\hat{0}ijk} & = \partial^k (e^\varphi \widebar{W}_{0ijk}) \\
    & - e^{\varphi} \widebar{W}(\partial_0,\hat{\nabla}^k(\partial_i),\partial_j,\partial_k)
    - e^{\varphi}  \widebar{W}(\partial_0,\partial_i, \hat{\nabla}^k(\partial_j),\partial_k)
     - e^{\varphi}  \widebar{W}(\partial_0,\partial_i, \partial_j, \hat{\nabla}^k(\partial_k)).
\end{align*}
Now the general transformation law
$$
   \hat{\nabla}_i(\partial_j) = \nabla_i(\partial_j) + \partial_i(\varphi) \partial_j 
   + \partial_j(\varphi) \partial_i - g_{ij} \grad(\varphi)
$$
implies that in general dimensions ($\dim (M) = n$)\footnote{That identity corrects \cite[(2.10)]{GGHW}.}
$$
    e^{\varphi} \hat{\nabla}^k \hat{\widebar{W}}_{\hat{0}ijk}
   = \nabla^k \widebar{W}_{0ijk} + (n-4) \widebar{W}_{0ij\grad^t(\varphi)}
    - \widebar{W}_{0\grad^t(\varphi)ij}.
$$
Hence for $n=3$ we find
$$
     e^{\varphi} \hat{\nabla}^k \hat{\widebar{W}}_{\hat{0}(ij)k}
    = \nabla^k \widebar{W}_{0(ij)k} - \widebar{W}_{0(ij)\grad^t(\varphi)}.
$$
Therefore,
$$
   e^\varphi \hat{B}_{ij} = B_{ij} + \overline{W}_{0(ij)\grad^t (\varphi)} + \W_{ij}  \partial_0(\varphi) -  \W_{ij}  \partial_0(\varphi)
   - \widebar{W}_{0(ij)\grad^t(\varphi)} = B_{ij}.
$$
Now the definition of $B$ gives
\begin{align*}
   (\lo,B) & \st \lo^{ij} \bar{\nabla}^k (\overline{W})_{0ijk} - H (\lo,\W) + \lo^{ij} \nabla^k \overline{W}_{0ijk} \\
   & = - \lo^{ij} \bar{\nabla}^k (\overline{W})^t_{jki0} + \lo^{ij} \bar{\nabla}^0 (\overline{W})_{0ij0} - H (\lo,\W)
   - \lo^{ij} \nabla^k \overline{W}_{jki0},
\end{align*}
where the superscript $t$ in the first sum indicates that indices are only tangential. Now Lemma \ref{LW-T} yields
\begin{align*}
   (\lo,B) & = \lo^{ij} \bar{\nabla}^0 (\overline{W})_{0ji0} + 2 H(\lo,\W) - 2 \lo^{ij} \nabla^k \overline{W}_{jki0} 
   - (\lo^2,\W) - \lo^{ij} \lo^{kl} \overline{W}_{kijl} .
\end{align*}
The proof of \eqref{Bach-deco} is complete.
\end{proof}

Lemma \ref{Bach-relation} shows that 
$$
    - 2(\lo,B) = -2 \lo^{ij} \bar{\nabla}^0 (\widebar{W})_{0ij0} - 4  H (\lo,\W)
   +4 \lo^{ij} \nabla^k \overline{W}_{jki0} + 2 (\lo^2,\W) +2 \lo^{ij}\lo^{kl} \overline{W}_{kijl}.
$$
Hence
$$
 -2 (\lo,B) + 14 (\lo^2,\W) - 2 \lo^{ij} \lo^{kl} \overline{W}_{kijl}
   = 16 (\lo^2,\W)  -2 \bar{\nabla}^0 (\widebar{W})_{0ij0} -4 H \W_{ij} + 4 \lo^{ij} \nabla^k \overline{W}_{jki0}
$$
(note the cancellation!). This proves \eqref{final} and hence Lemma \ref{van-equiv}.

Now in order to finish the {\bf proof of Theorem \ref{main1}}, it suffices to combine \eqref{B3-GW} 
with \eqref{star}.

\begin{rem}\label{GGHW-wrong} The formula for $\B_3$ in the published version of \cite{GGHW} reads
$$
   12 \B_3 = 4 \LOP ((\lo^2)_\circ) + 2 \LOP (\Fo) - 2 (\lo,B) + |\lo|^4 + 4 (\Fo,\JF) + 2(\Fo,\lo^2) + 2|\overline{W}_0|^2.
$$
By $\Fo = (\lo^2)_\circ + \W$, this formula is equivalent to
\begin{align*}
   12 \B_3 & = 6 \LOP((\lo^2)_\circ) + 2 \LOP(\W) - 2 (\lo,B) + |\lo|^4 + 2|\overline{W}_0|^2 \\
   & + 2 ((\lo^2)_\circ + \W,\lo^2) + 4 ((\lo^2)_\circ + \W, \lo^2 - \frac{1}{4} |\lo|^2 h + \W).
\end{align*}
The second line simplifies to
$$
   |\lo|^4 + 10 (\lo^2,\W) + 4 |\W|^2.
$$
If follows that the resulting formula for $12 \B_3$ differs from \eqref{B3-GW}, \eqref{GGHW-terms}.
\end{rem}

Finally, we note that for a conformally flat background Theorem \ref{main1} states that
$$
   6 \B_3 = 3 \LOP ((\lo^2)_\circ) + |\lo|^4 = 3 \delta \delta ((\lo^2)_\circ) + 3 ((\lo^2)_\circ,\Rho) + |\lo|^4.
$$
Lemma \ref{NEW3a} implies that this formula is equivalent to 
\begin{equation}\label{origin}
   6 \B_3  = \Delta (|\lo|^2) - |\nabla \lo|^2 + 3/2 |\delta(\lo)|^2 - 2 \J |\lo|^2 + |\lo|^4
\end{equation}
(as also stated in \cite[Proposition 2.10]{GW-LNY}).

\section{Variational aspects}\label{var}

\index{$\var$ \quad variation}  \index{$\mathcal{W}_3$ \quad higher Willmore functional}

Let $\iota: M^3 \hookrightarrow X^4$ be an embedding. In this section, we prove that the conformally invariant equation
$\B_3 = 0$ is the Euler-Lagrange equation of the conformally invariant functional
\begin{equation}\label{W3}
   \mathcal{W}_3(\iota) \st \int_{\iota(M)} (\tr(\lo^3) + (\lo,\W)) dvol
\end{equation}               
under normal variations of the embedding $\iota$. Let $u \in C^\infty(M)$ and $\partial_0$ be
a unit normal field of $M$. We set $\iota_t (m) = \exp (t u(m) \partial_0)$, where $\exp$ is the exponential map.
Then $\iota_0 = \iota$ and $\iota_t$ is a variation of $M$ with variation field $u \partial_0$. Let 
$\mathcal{W}_3(\iota_t)$ be the analogous functional for $\iota_t$ and define                              
$$
    \var (\mathcal{W}_3)[u] \st (d/dt)|_0 (\mathcal{W}_3(\iota_t)).
$$

\begin{thm}\label{variation} It holds
\begin{equation*}
   -\var(\mathcal{W}_3)[u] = 6 \int_M u \B_3 dvol.
\end{equation*}
\end{thm}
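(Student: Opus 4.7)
The plan is to compute the first variation $\var(\mathcal{W}_3)[u]$ directly under the normal exponential variation $\iota_t(m)=\exp(tu(m)\partial_0)$, integrate by parts until every derivative is on $u$, and then identify the resulting coefficient as $-6\B_3$ via the explicit formula \eqref{B3-g-final} of Theorem \ref{main1}. I would first record the standard infinitesimal variation formulas at $t=0$:
\begin{align*}
    \dot h_{ij} &= 2 u L_{ij}, \qquad \dot{dvol}_h = 3 u H\, dvol_h, \\
    \dot L_{ij} &= -\Hess_{ij}(u) + u (L^2)_{ij} - u\bar{\G}_{ij},
\end{align*}
from which $\dot{\lo}_{ij}$, $\dot H$, and the variations of raised indices follow. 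For the ambient Weyl-type quantity $\W_{ij}=\overline{W}_{0ij0}$ one needs the variation of the evaluation point together with the non-parallel transport of the frame $(\partial_0,\partial_i)$ along the normal flow; this produces a principal term $u\, \bar{\nabla}_0(\overline{W})_{0ij0}$ plus correction terms of the schematic form $u\, L\cdot \overline{W}$.

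Next I would expand each summand of the integrand. From $\tr(\lo^3)$ the variation produces $3\,\lo^{ij}(\lo^2)_{jk}\dot{\lo}_i^{\,k}$ modulo metric-raising corrections; after substituting $\dot\lo=\dot L-\dot H\cdot h - H\,\dot h$ and including the volume weight $3Hu$, one obtains a Hessian pairing $-3(\lo^2,\Hess(u))$ together with purely curvature contributions including $u|\lo|^4$, $u\,H\tr(\lo^3)$, and $u(\lo^2,\bar{\G})$. The piece $(\lo,\W)\,dvol_h$ produces analogously a Hessian pairing $-(\W,\Hess(u))$ together with $u\,\lo^{ij}\bar{\nabla}_0(\overline{W})_{0ij0}$ and mixed terms of the form $uH(\lo,\W)$, $u(\lo^2,\W)$.

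The third step is integration by parts. The two Hessian pairings convert, after two integrations by parts, into $-3u\,\delta\delta((\lo^2)_\circ)$ and $-u\,\delta\delta(\W)$. Combining these with the Schouten-tensor pieces generated when raising indices or commuting derivatives (using Lemma \ref{kappa-1a}) assembles them precisely into $-3u\,\LOP((\lo^2)_\circ)$ and $-u\,\LOP(\W)$, since $\LOP(b)=\delta\delta(b)+(\Rho,b)$. The remaining tangential derivatives of $\overline{W}$ are brought into canonical form $\lo^{ij}\nabla^k\overline{W}_{kij0}$ by the second Bianchi identity, while the Codazzi--Mainardi equation handles the mixed $\overline{W}_0$ contribution. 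Matching the assembled coefficient of $u$ term-by-term with $-\tfrac{1}{2}$ of \eqref{B3-g-final} supplies the factor $6$.

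The main obstacle is disentangling the variation of $\W$: because it depends simultaneously on the evaluation point, the ambient Weyl tensor, and the unit normal $\partial_0$, the derivative $\dot{\W}$ contains both the clean $\bar{\nabla}_0\overline{W}_{0ij0}$ piece and frame-transport corrections of shape $L\cdot\overline{W}_0$ and $H\cdot\W$. A sign or coefficient error in these corrections would propagate through the entire calculation, so one must carefully track their cancellation against analogous terms produced by the $\tr(\lo^3)$ variation and by the conformal-weight factor $3Hu$ in $\dot{dvol}_h$. The identities gathered in Section \ref{second}, in particular Lemma \ref{NEW3a} and Lemma \ref{diff-key-g}, will be used to check the final match with \eqref{B3-g-final}.
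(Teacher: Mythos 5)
Your overall strategy coincides with the paper's: compute the first variation directly from the standard formulas for $\dot h$, $\dot L$, $\dot H$, $\var(dvol_h)[u]=3uH\,dvol_h$, integrate by parts, and match the coefficient of $u$ against one half of \eqref{B3-g-final}; the only organizational difference is that the paper imports the variation of $\int_M\tr(\lo^3)\,dvol_h$ from \cite[Lemma 13.9.1]{JO} and computes only the $(\lo,\W)$ piece from scratch. However, as written your treatment of the $(\lo,\W)$ piece has a concrete gap: you describe $\dot{\W}$ as a principal term $u\,\bar{\nabla}_0(\overline{W})_{0ij0}$ plus corrections ``of the schematic form $u\,L\cdot\overline{W}$'', i.e.\ all proportional to $u$. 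This omits the contribution of the first-order tilt of the unit normal, $\dot{\partial}_0=-\grad(u)$, which is \emph{not} proportional to $u$: it produces the term $-2\lo^{ij}\overline{W}_{\grad(u)ij0}$ in the integrand. In the paper this is precisely the term which, after one integration by parts and the trace-free Codazzi--Mainardi equation \eqref{tf-CM}, yields $2u\,\lo^{ij}\nabla^k\overline{W}_{kij0}-u\,|\overline{W}_0|^2$ in $\var(\mathcal{W}_3)[u]$, i.e.\ the terms $-4\lo^{ij}\nabla^k\overline{W}_{kij0}+2|\overline{W}_0|^2$ in \eqref{B3-g-final}. Without it your final matching cannot close: there is no other source for $|\overline{W}_0|^2$, and the second Bianchi identity you invoke does not supply these terms --- the only ambient derivative that survives is $u\,\lo^{ij}\bar{\nabla}_0(\overline{W})_{0ij0}$, which appears in \eqref{B3-g-final} unchanged, so no Bianchi identity is needed anywhere in this proof.

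Two smaller corrections to your bookkeeping. First, the Schouten-tensor pieces that complete the $\delta\delta$ terms into $\LOP$ do not come from raising indices or commuting covariant derivatives (no commutation occurs in a first-variation computation), so Lemma \ref{kappa-1a} is not the relevant tool; they come from the Fialkov/Gauss identity \eqref{Fial}, which converts the $\bar{\G}$-contributions generated by $\var(L)[u]=-\Hess(u)+uL^2-u\bar{\G}$ and the pairing $(\bar{\G},\W)$ into $(\Rho,(\lo^2)_\circ)$, $(\Rho,\W)$, $(\lo^2,\W)$, $H(\lo,\W)$ and $|\W|^2$ terms. Second, Lemma \ref{NEW3a} and Lemma \ref{diff-key-g} are not needed for the match with \eqref{B3-g-final}; once the normal-tilt term is restored and \eqref{Fial} is used, the integrand assembles directly into $-\tfrac12$ of \eqref{B3-g-final}, which is how the paper concludes.
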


This result reproves \cite[Proposition 1.2]{GGHW}. Our arguments are classical and differ 
substantially from those in the reference (see the comments after the proof).

\begin{proof} We first note that the variation of $\int_M \tr (\lo^3) dvol$ has been determined
in \cite[Lemma 13.9.1]{JO} for conformally flat backgrounds. The given arguments easily extend to the
general case and yield
\begin{align*}
    - \var\left(\int_M \tr (\lo^3) dvol_h\right)[u] & = 3 \int_M u \left(\delta \delta ((\lo^2)_\circ) + (\Rho,(\lo^2)_\circ)
   + 2 (\lo^2,\W) + \frac{1}{3} |\lo|^4\right) dvol_h \\
   & = \int_M u \left( 3 \LOP ((\lo^2)_\circ) + 6 (\lo^2,\W) + |\lo|^4\right) dvol_h.
\end{align*}
In the second part of the proof, we determine the variation of $\int_M (\lo,\W) dvol_h$. We write the integrand as
$h^{ai} h^{bj} \lo_{ab} \W_{ij}$ and apply the well-known variation formulas \cite[Theorem 3-15]{A}, 
\cite[Theorem 3.2]{HP}
\begin{align*}
    \var (h)[u] & = 2 u L, \\
    \var (L)[u] & = - \Hess(u) + u L^2 - u \bar{\G}, \\
    3 \var (H)[u] & = - \Delta (u) - u |L|^2 - u \overline{\Ric}_{00}
\end{align*}
and
$$
   \var(dvol_h)[u] = 3 u H dvol_h.
$$
It follows that the variation is given by the integral of the sum of
\begin{equation}\label{V1}
   - 4u (\lo^2,\W) - 4 u H (\lo, \W) \qquad \mbox{(by variation of the metric)},
\end{equation}
\begin{align}\label{V2}
    (\var(\lo)[u],\W) & = \W^{ij} (\var(L)_{ij} - H \var(h)_{ij}) \notag \\
    & = \W^{ij} (-\Hess_{ij}(u) + u (L^2)_{ij} - u \bar{\G}_{ij}) - 2 u H \W^{ij} L_{ij} \notag \\
    & = - (\Hess (u), \W) + u (\lo^2,\W) - u (\bar{\G},\W),
\end{align}
\begin{align}\label{V3}
   (\lo,\var(\W)[u]) &  = u \lo^{ij} \bar{\nabla}_0 (\overline{W})_{0ij0}
   + u \lo^{ij} (L_i^k \overline{W}_{0kj0} + L^{k}_j \overline{W}_{0ik0}) \notag \\
   & = u  \lo^{ij} \bar{\nabla}_0 (\overline{W})_{0ij0}  + 2 u (\lo^2,\W) + 2 u H(\lo,\W)
\end{align}
and
\begin{align}\label{V5}
   & -2 \lo^{ij} \overline{W}_{\grad(u) ij0} \qquad \mbox{(by variation of the normal vector)}, \notag \\
   & 3 u H  (\lo,\W)  \qquad \mbox{(by variation of the volume form)}.
\end{align}
Now using partial integration we obtain
\begin{align*}
   & \var\left(\int_M (\lo,\W) dvol_h \right)[u] \\
   & = \int_M u \left[ - \delta \delta (\W) - (\lo^2,\W) + H (\lo,\W) - (\bar{\G},\W)
   + \lo^{ij} \bar{\nabla}_0 (\overline{W})_{0ij0} \right] dvol_h \\
   & - 2 \int_M \lo^{ij} \overline{W}_{\grad(u)ij0} dvol_h.
\end{align*}
Since
$$
   (\bar{\G},\W) = (\bar{\Rho},\W) + (\W,\W) = (\Rho,\W) - H (\lo,\W) + (\lo^2,\W) + 2 |\W|^2
$$
by \eqref{Fial}, we get
\begin{align*}
   & \var\left(\int_M (\lo,\W) dvol_h \right)[u] \\
   & = \int_M u \left[ - \LOP(\W) + 2H (\lo,\W) -  2 (\lo^2,\W) - 2 |\W|^2
   + \lo^{ij} \bar{\nabla}_0 (\overline{W})_{0ij0} \right] dvol_h \\
   & - 2 \int_M (du,\lo^{ij} \overline{W}_{\cdot ij0}) dvol_h.
\end{align*}
By partial integration, we find
\begin{align*}
  2 \int_M (du,\lo^{ij} \overline{W}_{\cdot ij0}) dvol_h
  & = - 2 \int_M u \delta (\lo^{ij} \overline{W}_{\cdot ij0}) dvol_h  = - 2 \int_M u \nabla^k (\lo^{ij} \overline{W}_{kij0}) dvol_h \\
  &  = - 2 \int_M u \lo^{ij} \nabla^k \overline{W}_{kij0} dvol_h + \int_M u |\overline{W}_0|^2 dvol_h
\end{align*}
using the trace-free Codazzi-Mainardi equation \eqref{tf-CM} (similarly as on page \pageref{total}). Summarizing these results,
proves the claim.
\end{proof}

Graham's theorem \cite[Theorem 3.1]{Graham-Yamabe} and \cite[(3.8)]{JO} imply that the variation 
of $\int v_3 dvol$ equals $4 \int u \B_3 dvol$. Here the singular Yamabe renormalized volume coefficient $v_3$ 
(as defined in \cite{Graham-Yamabe}) satisfies
$$
   12 \int_Mv_3 dvol = - \int_M \mathbf{Q}_3 dvol = - 8 \int_M (\tr(\lo^3) + (\lo,\W)) dvol = - 8 \mathcal{W}_3,
$$ 
where ${\bf{Q}}_3$ is the extrinsic $Q$-curvature (see \cite[Example 13.10.2 and (13.10.7)]{JO}). In other words, the 
variation of $\mathcal{W}_3$ equals $-6 \int u \B_3 dvol$. This shows that Theorem \ref{variation} 
fits with Graham's theorem. On the other hand, \cite[Proposition 1.2]{GGHW} states that the variation of 
$\mathcal{W}_3$ equals $6 \int u \B_3 dvol$. The discrepancy of the sign is due to the altered definition of variations.
Note that the notion of variation exploited in \cite{GGHW} leads to the result $\var (L)[u] = \Hess (u)$ 
(see \cite[(3.8)]{GGHW}). This formula differs from the usual formula used above.


\printindex


\begin{thebibliography}{GGHW19}
\bibitem [AGV]{AGV}
E.~Abbena, A.~Gray and L.~Vanhecke, {\em Steiner's formula for the volume of a parallel hypersurface 
in a Riemannian manifold}, Annali Sc. Norm. Sup. Pisa, {\bf 8}, (3), 473-493 (1981).

\bibitem [A20]{Albin}
P.~Albin, {\em Poincaré-Lovelock metrics on conformally compact manifolds}, Advances in Math. {\bf 367} 
(2020).

\bibitem [ACF92]{ACF}
L.~Andersson, P.~Chru\'sciel and H.~Friedrich, {\em On the regularity of solutions to the
Yamabe equation and the existence of smooth hyperboloidal initial data for Einstein's field
equations}, Commun. Math. Phys. {\bf 149}, 587--612 (1992).

\bibitem [A94]{A}
B.~Andrews, {\em Contraction of convex hypersurfaces in Riemannian spaces}, J. Diff. Geom. {\bf 39} (1994),  
407--431.

\bibitem [B87]{Besse}
A.~Besse,  {\em Einstein manifolds}, Ergebnisse der Mathematik und ihrer Grenzgebiete,
{\bf 10}, Springer-Verlag, (1987).

\bibitem [BG90]{BG}
T.~Branson and P.~Gilkey, {\em The asymptotics of the {L}aplacian on a manifold with boundary},
Comm. Partial Differential Equations, {\bf 15}, 2 (1990), 245--272.

\bibitem [FG12]{FG-final}
C.~Fefferman and C.~R.~Graham, {\em The Ambient Metric}. Annals of Math. Studies {\bf 178},
Princeton University Press, 2012. \url{arXiv:0710.0919}

\bibitem[GGHW19]{GGHW}
M.~Glaros, R.~Gover, M.~Halbasch and A.~Waldron, {Variational calculus for hypersurface functionals:
singular Yamabe problem Willmore energies}, J. Geom. Phys. {\bf 138}, (2019), 168--193. \url{arXiv:1508.01838v1}

\bibitem [G10]{Gover-AE}
R.~Gover, {\em Almost Einstein and Poincar\'e-Einstein manifolds in Riemannian signature},
Journal of Geometry and Physics {\bf 60}, 2 (2010), 182--204. \url{arXiv:0803.3510v1}

\bibitem [GW14]{GW-announce}
R.~Gover, A.~Waldron, {\em Generalising the Willmore equation: submanifold conformal invariants
from a boundary Yamabe problem}. \url{arXiv:1407.6742v1}

\bibitem [GW15]{GW-LNY}
R.~Gover, A.~Waldron, {\em Conformal hypersurface geometry via a boundary Loewner-Nirenberg-Yamabe problem},
Comm. in Analysis and Geometry (to appear). \url{arXiv:150602723v3}

\bibitem [GW17]{GW-reno}
R.~Gover, A.~Waldron, {\em Renormalized volume}, Comm. in Math. Physics {\bf 354}, 3, (2017),
1205--1244. \url{arXiv:1603.07367}

\bibitem [G17]{Graham-Yamabe}
C.~R.~Graham, {\em Volume renormalization for singular Yamabe metrics}, Proc. Amer. Math.
Soc. {\bf 145} (2017), 1781--1792. \url{arXiv:1606.00069}

\bibitem [GG19]{GG}
C.~R.~Graham and M.~Gursky, {\em Chern-Gauss-Bonnet formula for singular Yamabe metrics in
dimension four}. \url{arXiv:1902.01562}

\bibitem [G04]{Gray}
A.~Gray, {\em Tubes}, Progress in Math. {\bf 221}, Birkhäuser, 2004.

\bibitem [HP99]{HP}
G.~Huisken and A.~Polden, {\em Geometric Evolution Equations for Hypersurfaces}, in S. Hildebrand, M. Struwe (eds)
{\em Calculus of Variations and Geometric Evolution Problems}, Lecture Notes in Mathematics {\bf 1713}, 1999, 45--84.

\bibitem [J09]{J1}
A.~Juhl, {\em Families of conformally covariant differential operators, $Q$-curvature and holography},
Birkhäuser, Progress in Mathematics {\bf 275}, 2009.

\bibitem [J13]{J2}
A.~Juhl, {\em Explicit formulas for GJMS-operators and $Q$-curvatures},
Geom. Funct. Anal. {\bf 23}, 4, (2013), 278--1370. \url{arXiv:1108.0273}

\bibitem [JO20]{FJO}
A.~Juhl and B.~{\O}rsted, {\em Shift operators, residue families and degenerate Laplacians},
Pacific J. Math. {\bf 308}, 1,  (2020), 103--160. \url{arXiv:1806:02556}

\bibitem [JO21]{JO}
A.~Juhl and B.~{\O}rsted, {\em Residue families, singular Yamabe problems and extrinsic conformal Laplacians}.
\url{arXiv:2101.09027v1}

\bibitem [SSY75]{SchSY}
R.~Schoen, L.~Simon and S.-T.~Yau, {\em Curvature estimates for minimal hypersurfaces}, Acta Math.
{\bf 134}, (1975), 276--288.

\bibitem [S68]{Simons}
J.~Simons, {\em Minimal Varieties in Riemannian Manifolds}, Annals of Math.,
{\bf 88}, 1 (1968), 62--105.

\bibitem [S15]{Sol}
S.~N.~Soloduchin, {\em Boundary terms of conformal anomaly}, Physics Letters B {\bf 752} (2016), 131---134.

\bibitem [V13]{V}
Y.~Vyatkin, {\em Manufacturing conformal invariants of hypersurfaces}, PhD thesis,
University of Auckland, 2013.

\bibitem [W93]{Will}
T.~J.~Willmore, {\em Riemannian Geometry}, Oxford Science Publications, 1993.

\end{thebibliography}
\end{document}